\numberwithin{equation}{section}
\numberwithin{figure}{section}
\numberwithin{table}{section}
\DeclareMathOperator{\Pic}{Pic}
\DeclareMathOperator{\im}{im}
\DeclareMathOperator{\rank}{rank}
\DeclareMathOperator{\Spec}{Spec}
\DeclareMathOperator{\id}{Id}
\DeclareMathOperator{\Id}{Id}
\DeclareMathOperator{\NE}{NE}
\DeclareMathOperator{\Tr}{Tr}
\DeclareMathOperator{\val}{val}
\DeclareMathOperator{\TV}{TV}
\DeclareMathOperator{\sign}{sign}
\DeclareMathOperator{\SL}{SL}
\DeclareMathOperator{\GL}{GL}
\DeclareMathOperator{\PSL}{PSL}
\DeclareMathOperator{\trop}{trop}
\DeclareMathOperator{\Aut}{Aut}
\DeclareMathOperator{\Eff}{Eff}
\DeclareMathOperator{\sat}{sat}
\let\bb=\mathbb
\let\rar=\rightarrow
\let\s=\mathcal
\let\wh=\widehat
\let\wt=\widetilde
\let\mb=\mbox
\newcommand {\kk} {\Bbbk}
 \newtheorem{thm}{Theorem}[section]
 \newtheorem{lem}[thm]{Lemma}
 \newtheorem{prop}[thm]{Proposition}
 \newtheorem{conj}[thm]{Conjecture}
\theoremstyle{definition}
 \newtheorem{dfn}[thm]{Definition}
 \newtheorem{dfns}[thm]{Definitions}
 \newtheorem{ntn}[thm]{Notation}
 \newtheorem{eg}[thm]{Example}
 \newtheorem{rmk}[thm]{Remark}
 \newtheorem{obss}[thm]{Observations}
 \newtheorem{constr}[thm]{Construction}
\begin{document}
\allowdisplaybreaks

\newcommand{\arXivNumber}{1407.6241}

\renewcommand{\PaperNumber}{042}

\FirstPageHeading

\ShortArticleName{Classification of Rank 2 Cluster Varieties}

\ArticleName{Classification of Rank 2 Cluster Varieties}

\Author{Travis MANDEL}

\AuthorNameForHeading{T.~Mandel}

\Address{School of Mathematics, University of Edinburgh, Edinburgh EH9 3FD, UK}
\Email{\href{mailto:Travis.Mandel@ed.ac.uk}{Travis.Mandel@ed.ac.uk}}
\URLaddress{\url{https://www.maths.ed.ac.uk/~tmandel/}}

\ArticleDates{Received May 09, 2018, in final form May 15, 2019; Published online May 27, 2019}

\Abstract{We classify rank $2$ cluster varieties (those for which the span of the rows of the exchange matrix is $2$-dimensional) according to the deformation type of a generic fiber~$U$ of their $\s{X}$-spaces, as defined by Fock and Goncharov [\textit{Ann.\ Sci.\ \'{E}c.\ Norm.\ Sup\'{e}r.~(4)} \textbf{42} (2009), 865--930]. Our approach is based on the work of Gross, Hacking, and Keel for cluster varieties and log Calabi--Yau surfaces. Call~$U$ positive if $\dim[\Gamma(U,\s{O}_U)] = \dim(U)$ (which equals~2 in these rank~2 cases). This is the condition for the Gross--Hacking--Keel construction [\textit{Publ.\ Math.\ Inst.\ Hautes \'{E}tudes Sci.}\ \textbf{122}
 (2015), 65--168] to produce an additive basis of theta functions on $\Gamma(U,\s{O}_U)$. We find that~$U$ is positive and either finite-type or non-acyclic (in the usual cluster sense) if and only if the inverse monodromy of the tropicalization~$U^{\trop}$ of~$U$ is one of Kodaira's monodromies. In these cases we prove uniqueness results about the log Calabi--Yau surfaces whose tropicalization is $U^{\trop}$. We also describe the action of the cluster modular group on~$U^{\trop}$ in the positive cases.}

\Keywords{cluster varieties; log Calabi--Yau surfaces; tropicalization; cluster modular group}

\Classification{13F60; 14J32}

\section{Introduction}\label{Introduction}

In \cite{FG1}, Fock and Goncharov define a class of schemes, called cluster varieties, whose rings of global regular functions are upper cluster algebras. In \cite{GHK3}, Gross, Hacking, and Keel describe how to view cluster varieties as certain blowups of toric varieties. We review this description, as well as Gross--Hacking--Keel's construction \cite{GHK1} of the tropicalization of a log Calabi--Yau surface. We then use these ideas to give a classification of rank\footnote{Cluster algebraists often take rank $2$ to mean that the exchange matrix is $2\times 2$. However, we use rank to mean the dimension of the space spanned by the rows or columns of the exchange matrix.} $2$ cluster varieties (those for which the symplectic leaves of the $\s{X}$-space are $2$ dimensional) and to describe their cluster modular groups. This can also be viewed as a classification of log Calabi--Yau surfaces.

By a {\it log Calabi--Yau surface} or a {\it Looijenga interior}, we mean a surface $U$ which can be realized as $Y\setminus D$, where $Y$ is a smooth, projective, rational surface over an algebraically closed field $\kk$ of characteristic~$0$, and the {\it boundary}~$D$ is a choice of snc anti-canonical divisor in $Y$. Furthermore, $D=D_1+\cdots + D_n$ is either a cycle of smooth irreducible rational curves $D_i$ with normal crossings, or if $n=1$, $D$~is an irreducible curve with one node. By a {\it compactification} of~$U$, we mean such a pair $(Y,D)$ (\cite{GHK2} calls these compactifications with ``maximal boundary''). We call $(Y,D)$ a {\it Looijenga pair}, as in~\cite{GHK1}. Toric varieties are the most basic examples, and every~$U$ can be obtained by performing certain blowups on a toric surface, cf.\ Lemma~\ref{ToricModel}.

\subsection{Outline of the paper}

{\bf Cluster varieties:} Section~\ref{cluster} reviews \cite{FG1}'s definition of cluster varieties and summarizes~\cite{GHK3}'s description of cluster varieties as certain blowups of toric varieties (up to codimension~$2$). In particular, we review Section~5 of \cite{GHK3}, which shows that log Calabi--Yau surfaces are roughly the same as fibers of rank~$2$ cluster $\s{X}$-varieties. Our classification of cluster varieties will be up to deformation of these associated log Calabi--Yau surfaces. In Sections~\ref{CMGdef} and~\ref{ClusterComplex}, we review \cite{FG1}'s definitions of the cluster modular group $\Gamma$ and the cluster complex~$\s{C}$. Proposition~\ref{TropClust} gives a~simpler definition of~$\Gamma$ by showing that the triviality of cluster transformations can be checked on~$\s{X}^{\trop}$ rather than needing to examine the full~$\s{A}$ and $\s{X}$-spaces.

{\bf The tropicalization of $\boldsymbol{U}$:} In Section~\ref{tropu}, we review \cite{GHK1}'s construction of the tropicaliza\-tion~$U^{\trop}$ of a log Calabi--Yau surface. $U^{\trop}$ is homeomorphic to $\bb{R}^2$, but it has a natural integral linear structure that captures the intersection data of the boundary divisors. The integer points $U^{\trop}(\bb{Z})\subset U^{\trop}$ generalize the cocharacter lattice $N$ for toric varieties, and~$U^{\trop}$ itself generalizes $N_{\bb{R}}:=N\otimes \bb{R}$.

The integral linear structure is singular at a point $0\in U^{\trop}$, and in Section~\ref{mono} we examine the monodromy around this point. In Section~\ref{lines}, we discuss properties of lines in $U^{\trop}$. For example, the monodromy in $U^{\trop}$ may make it possible for lines to wrap around the origin and self-intersect. Section~\ref{LinearAuts} introduces some automorphisms of $U^{\trop}$ that we will see in Section~\ref{CMG} are induced by the action of $\Gamma$. In Section~\ref{Man}, we review some lemmas from \cite{Man1} which will be useful for the classification in Section~\ref{classification}.

Section~\ref{trop-charge} shows that, although $U^{\trop}$ does not in general determine the deformation type of $U$, it does at least determine the {\it charge} of $U$, which is the number of ``non-toric blowups'' necessary to realize a compactification of $U$ as a blowup of a toric variety.

{\bf Classification:} As in \cite{GHK1, GHK2}, log Calabi--Yau surfaces can be classified based on the intersection matrix $(D_i\cdot D_j)_{ij}$ as negative definite, strictly negative semi-definite, or \textit{positive} (meaning not negative semi-definite). Some equivalent characterizations of these cases appear in \cite[Lem\-ma~6.9]{GHK1}, with additional characterizations scattered throughout the various versions of~\cite{GHK1, GHK2}. We review many of these characterizations in Theorems~\ref{NegD},~\ref{NegSD}, and~\ref{Pos}.

We then turn to the main result of this paper, namely, a refinement of the characterization of positive log Calabi--Yau surfaces. These refined classifications are given in Theorem~\ref{all-wrap} (the positive non-acyclic cases), Theorem~\ref{not-all-wrap} (the acyclic cases), and Theorem~\ref{no-wrap} (the finite-type cases), with Proposition~\ref{SomeWrap} separating out the cases which are acyclic but not finite-type. The refined classification is based on several different properties of these varieties, including (but not limited to):
\begin{itemize}\itemsep=0pt
\item The properties of the quiver associated to the cluster variety~-- e.g., Dynkin (finite-type), acyclic, or non-acyclic.
\item The space of global regular functions on $U$~-- e.g., all constant, or including some, all, or no cluster $\s{X}$-monomials.
\item The geometry of $U^{\trop}$, including the monodromy and properties of lines.
\item The intersection form $Q$ on the lattice $D^{\perp}\subset A_1(Y,\bb{Z})$ of curve classes which do not intersect any component of~$D$.
\item The intersection of the Langlands dual cluster complex (a subset of $\s{X}^{\trop}$) with $U^{\trop}$~-- e.g., some, all, or none of~$U^{\trop}$.
\end{itemize}

For example, we find that $U$ corresponds to an acyclic cluster variety if and only if some straight lines in $U^{\trop}$ do not wrap all the way around the origin. The cases where no lines wrap correspond to finite-type cluster varieties. We show that the inverse monodromies of $U^{\trop}$ in these finite-type cases are Kodaira's monodromy matrices $I_n$, $II$, $III$, and $IV$, from his classification of singular fibers in elliptic surfaces in \cite{Kod} (cf.\ Table~\ref{tab:no wrap} for a summary of these cases). Similarly, the non-acyclic positive cases correspond to Kodaira's matrices $I_n^*$, $II^*$, $III^*$, and~$IV^*$~-- furthermore, the intersection form $Q$ on $D^{\perp}$ here is of type $D_{n+4}$ ($n\geq 0$) or $E_n$, $n=8$, $7$, or $6$, respectively (cf.\ Table~\ref{wraptable}). The deformation types for the Kodaira-monodromy cases {\it are} uniquely determined by $U^{\trop}$, and we describe how to construct each of these cases explicitly.

{\bf Cluster modular groups:} \cite{FG1} defines a certain group~$\Gamma$ of automorphisms of cluster varieties, called the {\it cluster modular group}. In Section~\ref{CMG} we explicitly describe the action of~$\Gamma$ on~$U^{\trop}$ in all the positive cases (cf.\ Table~\ref{ClusterModularTable}). This action is interesting because, in addition to capturing most of the relevant data about $\Gamma$, it preserves the scattering diagram which~\cite{GHK1} and~\cite{GHKK} use to construct canonical theta functions on the mirror. Symmetries of the scattering diagram induced by mutations were previously observed in \cite[Theorem~7]{GP}, although they did not put this in the language of cluster varieties or describe the full groups of automorphisms induced in this way.

We end by applying several of the previous results to prove Theorem~\ref{sing}, which says that if the monodromy of $U^{\trop}$ is any of Kodaira's monodromies, then $U^{\trop}$ uniquely determines~$U$ up to a strong version of deformation equivalence that marks~$U$ by its relationship with~$U^{\trop}$.

\section{Cluster varieties as blowups of toric varieties} \label{cluster}

In \cite{FG1}, Fock and Goncharov construct spaces called cluster varieties by gluing together algebraic tori via certain birational transformations called mutations. \cite{GHK3} interprets these mutations from the viewpoint of birational geometry, and thereby relates the log Calabi--Yau surfaces of \cite{GHK1} to cluster varieties. This section will summarize some of the main ideas from~\cite{GHK3}. We do not assume rank~$2$ in this section unless otherwise stated.

\subsection{Defining cluster varieties}\label{ClusterDefs}
The following construction is due to Fock and Goncharov \cite{FG1}.

\begin{dfn}A {\it seed} is a collection of data
\begin{gather*} S=(N,I,E:=\{e_i\}_{i\in I},F,\langle\cdot,\cdot\rangle,\{d_i\}_{i\in I}),\end{gather*}
where $N$ is a finitely generated free Abelian group, $I$ is a finite index set, $E$ is a basis for $N$ indexed by~$I$, $F$ is a subset of $I$, $\langle \cdot,\cdot\rangle$ is a skew-symmetric $\bb{Q}$-valued bilinear form, and the $d_i$'s are positive rational numbers called {\it multipliers}. We call $e_i$ a {\it frozen} vector if $i \in F$. The {\it rank} of a seed or of a cluster variety will mean the rank of $\langle \cdot, \cdot \rangle$.

We define another bilinear form on $N$ by
\begin{gather*} (e_i,e_j):=\epsilon_{ij}:=d_j\langle e_i,e_j \rangle,\end{gather*} and we require that $\epsilon_{ij}\in \bb{Z}$ for all $i,j\in I$.
 Let $M=N^*$. Define\footnote{Beware that our subscripts for $p_1^*$ and $p_2^*$ do not mean the same thing as for~\cite{GHK3}'s $p_1^*$ and $p_2^*$.}
 \begin{gather*} p_1^*\colon \ N\rar M,\qquad v\mapsto (v,\cdot), \qquad p_2^*\colon \ N\rar M,\qquad v\mapsto (\cdot,v).
 \end{gather*} Let $K_i:= \ker (p_i^*)$, $\?{N_i} := \im(p_i^*)\subseteq M$, $\?{e_i}:=p_1^*(e_i)$, and $v_i:=p_2^*(e_i)$. For each $i\in I$, define a~``modified multiplier'' $d_i'$ by saying that $v_i$ is $d_i'$ times a primitive vector in $M$.
\end{dfn}

\begin{rmk}\label{determined}Given only the matrix $(e_i,e_j)$ and the set $F$, we can recover the rest of the data, up to a rescaling of $\langle \cdot,\cdot\rangle$ and a corresponding rescaling of the $d_i$'s. This rescaling does not affect the constructions below, and it is common take the scaling out of the picture by assuming that the $d_i$'s are relatively prime integers (although we do not make this assumption). Also, notice that $\langle\cdot,\cdot\rangle$ and $\{d_i'\}$ together determine $\{d_i\}$, so when describing a seed we may at times give $\{d'_i\}$ instead of $\{d_i\}$.
\end{rmk}

\begin{obss}\label{Cluster Obs}\quad
\begin{itemize}\itemsep=0pt
\item $K_1$ is also equal to $\ker (v\mapsto \langle v,\cdot\rangle )$, so $\langle \cdot,\cdot\rangle$ induces non-degenerate skew-symmetric form on~$\?{N_1}$. This also means that we could have equivalently defined the rank to be that of~$(\cdot,\cdot)$.
\item Define another skew-symmetric bilinear form on $N$ by $[e_i,e_j]:=d_id_j\langle e_i,e_j \rangle$. Then $K_2=\ker\left(v\mapsto [ \cdot,v ] \right)$, so $[e_i,e_j]$ induces a non-degenerated skew-symmetric form on $\?{N_2}$. We can extend this to $\?{N_2}^{\sat}$ (the saturation in $M$ of $\?{N_2}$), and after possibly rescaling $[\cdot,\cdot]$ (and adjusting the $d_i$'s accordingly) we can identify this with the standard skew-symmetric form on $\?{N_2}^{\sat}$ with the induced orientation. We will denote this form and the induced symplectic form on $\?{N_{2,\bb{R}}}$ by $(\cdot\wedge \cdot)$. Here and in the future, $\bb{R}$ in the subscript means the lattice tensored with $\bb{R}$.
\item We note that the seed obtained from $S$ by replacing $\langle \cdot,\cdot\rangle$ with $[\cdot,\cdot]$ and $d_i$ with $d_i^{-1}$ produces the {\it Langlands dual seed} $S^{\vee}$ described in \cite{FG1}.
 Switching to $S^{\vee}$ has the effect of replacing~$(\cdot,\cdot)$ with its negative transpose, thus switching the roles of (and negating)~$p_1^*$ and~$p_2^*$.
\item Since $(\cdot, e_i) = -d_i\langle e_i,\cdot\rangle$, we see that $\im(p^*_2)$ and $\im(v\mapsto \langle v,\cdot\rangle)$ span the same subspace of~$M_{\bb{R}}$. Since the kernel of $v\mapsto \langle v,\cdot\rangle$ is $K_1$ by the first observation above, we see that there is a canonical isomorphism $\?{N_{2,\bb{R}}}\cong \?{N_{1,\bb{R}}}$. One checks that this is a symplectomorphism with respect to the symplectic forms induced by $[\cdot,\cdot]$ and $\langle \cdot,\cdot\rangle$.
\end{itemize}
\end{obss}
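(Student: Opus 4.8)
The four items are all elementary linear algebra about the pairings $\langle\cdot,\cdot\rangle$, $(\cdot,\cdot)$, $[\cdot,\cdot]$ and the maps $p_1^*,p_2^*\colon N\to M$, and the plan is to reduce each of them to a single dictionary. Let $\phi\colon N\to M$ be the map $v\mapsto\langle v,\cdot\rangle$, and let $D_N\colon N\to N$ and $D_M\colon M\to M$ be the ``diagonal'' scalings $e_i\mapsto d_ie_i$ and $e_i^*\mapsto d_ie_i^*$; since every $d_i>0$, both become isomorphisms after $\otimes\,\mathbb{Q}$. Evaluating on the basis $\{e_i\}$ using $(e_i,e_j)=d_j\langle e_i,e_j\rangle$ and $[e_i,e_j]=d_id_j\langle e_i,e_j\rangle$ yields the three identities I would record once and reuse: $p_1^*=D_M\circ\phi$; $p_2^*=-\,\phi\circ D_N$ (equivalently $(\cdot,e_i)=-d_i\langle e_i,\cdot\rangle$, which is also the starting point of the fourth item); and $[v,w]=\langle D_Nv,D_Nw\rangle$ for all $v,w\in N$.

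Granting these, each observation is a two-line deduction. \emph{First item:} $\ker p_1^*=\ker(D_M\circ\phi)=\ker\phi$ since $D_M$ is injective, which is the first assertion; as $\ker\phi$ is the radical of the skew form $\langle\cdot,\cdot\rangle$, that form descends to a nondegenerate (still skew-symmetric) form on $N/\ker\phi$, hence on $\overline{N_1}$ via the isomorphism $p_1^*\colon N/\ker\phi\xrightarrow{\sim}\overline{N_1}$, and $\rank(\cdot,\cdot)=\rank\langle\cdot,\cdot\rangle$ because the matrix $(\epsilon_{ij})$ is obtained from that of $\langle\cdot,\cdot\rangle$ by scaling columns by the positive numbers $d_j$. \emph{Second item:} evaluating on the basis gives $[e_j,v]=d_j(e_j,v)$, so $[\cdot,v]=0\iff(\cdot,v)=0\iff v\in K_2$, giving $K_2=\ker(v\mapsto[\cdot,v])$; then $[\cdot,\cdot]$ descends to a nondegenerate integral skew form on $\overline{N_2}$ and extends $\mathbb{Q}$-linearly to the saturation $\overline{N_2}^{\sat}\subseteq M$, after which it has to be put into standard form (the one point needing care; see the last paragraph). \emph{Third item:} I would first check that the recipe ``replace $\langle\cdot,\cdot\rangle$ by $[\cdot,\cdot]$ and each $d_i$ by $d_i^{-1}$'' agrees with Fock--Goncharov's definition of $S^\vee$, and then compute the new pairing $(e_i,e_j)^\vee=d_j^{-1}[e_i,e_j]=d_i\langle e_i,e_j\rangle=-(e_j,e_i)$; thus the pairing of $S^\vee$ is the negative transpose of $(\cdot,\cdot)$, and one reads off that the analogues of $p_1^*,p_2^*$ for $S^\vee$ are $-p_2^*$ and $-p_1^*$. \emph{Fourth item:} $p_2^*(e_i)=-d_i\langle e_i,\cdot\rangle$ shows that the sublattices $\im(p_2^*)=\overline{N_2}$ and $\im(v\mapsto\langle v,\cdot\rangle)$ of $M$ span one and the same subspace $\overline{N_{2,\mathbb{R}}}\subseteq M_{\mathbb{R}}$; since $p_1^*=D_M\circ\phi$ we get $\overline{N_{1,\mathbb{R}}}=D_M(\overline{N_{2,\mathbb{R}}})$, so $D_M$ restricts to an isomorphism $\overline{N_{2,\mathbb{R}}}\xrightarrow{\sim}\overline{N_{1,\mathbb{R}}}$ (canonical, as $D_M$ is part of the seed data). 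To see it is a symplectomorphism, write a point of $\overline{N_{2,\mathbb{R}}}$ as $p_2^*(v)$; then $D_M(p_2^*(v))=-p_1^*(D_Nv)$ by the first two identities, so the forms induced from $[\cdot,\cdot]$ and $\langle\cdot,\cdot\rangle$ pair $D_M p_2^*(v),D_M p_2^*(w)$ as $\langle D_Nv,D_Nw\rangle=[v,w]$, which is their pairing before applying $D_M$.

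The only step that is not purely mechanical is the claim in the second item that, after rescaling $[\cdot,\cdot]$, the induced form on $\overline{N_2}^{\sat}$ is literally the standard symplectic form with the stated orientation. Here I would use that $\overline{N_2}^{\sat}$ is saturated, so the nondegenerate integral skew form it carries admits a symplectic (Smith) normal basis; in the rank-$2$ case relevant to this paper the form is just one integer times the standard one, and clearing that integer, together with fixing an orientation convention, finishes the job (in higher rank one would additionally need the invariant factors of the form to coincide, which is the only place the precise choice of saturation enters). Everything else is bookkeeping with $\phi$, $D_N$, $D_M$ and signs, the main pitfall being to conflate $\phi$ with $p_1^*$ or $p_2^*$: they differ precisely by $D_M$ and by $-D_N$, and tracking exactly where those enter is what makes each statement come out with the normalization stated.
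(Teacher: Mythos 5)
Your verification is correct, and the paper itself offers no proof of these observations (they are stated as routine checks), so your direct computation via the dictionary $p_1^*=D_M\circ\phi$, $p_2^*=-\phi\circ D_N$, $[v,w]=\langle D_Nv,D_Nw\rangle$ is exactly the intended argument. Your caveat about the second item is also well placed: the identification with the standard skew form genuinely uses that $\overline{N_2}^{\sat}$ has rank $2$ (the only case the paper needs), since in higher rank one would have to match invariant factors rather than clear a single scalar.
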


Given a seed $S$ as above and a choice of non-frozen vector $e_j\in E$, we can use a {\it mutation} to define a new seed $\mu_j(S):=(N,I,E'=\{e_i'\}_{i\in I},F,\langle\cdot,\cdot\rangle,\{d_i\})$, where the $(e_i')$'s are defined by
\begin{gather}\label{seedmut}
 e'_{i} = \mu_{j}(e_{i}): = \begin{cases}
 e_{i}+\epsilon_{ij}e_{j} &\mb{if $\epsilon_{ij} >0$}, \\
 -e_{i} &\mb{if $i=j$}, \\
	e_{i} &\mb{otherwise}.
	\end{cases}
\end{gather}
Mutation with respect to frozen vectors is not allowed. Note that although the bases change, the form $\langle \cdot,\cdot\rangle$ does not, so $K_1$ and $\?{N}_1^{\sat}$ are invariant under mutation. The same is true for $K_2$ and $\?{N_2}^{\sat}$, as can similarly be seen using the Langlands dual seed and $[\cdot,\cdot]$~-- one can check that the procedure for obtaining $S^{\vee}$ from $S$ commutes with mutation.

Given a lattice $L$ and some $v\in L^*$, we will denote by $z^v$ the corresponding monomial on $T_L:=L\otimes \kk^*=\Spec \kk[L^*]$ (more precisely, max-$\Spec$ of $\kk[L^*]$). Corresponding to a seed $S$, we can define a so-called seed $\s{X}$-torus $X_S:=T_{M}=\Spec \kk[N]$, and a seed $\s{A}$-torus $A_S:=T_{N}=\Spec \kk[M]$.
 We define {\it cluster monomials} $X_i:=z^{e_i}\in \kk[N]$ and $A_i:=z^{e_i^*}\in \kk[M]$, where $\{ e_i^*\}_{i\in I}$ is the dual basis to $E$.

\begin{rmk}\label{convention-departure}In place of $M$, other authors may use the superlattice $(M)^{\circ}\subset M\otimes \bb{Q}$ spanned over $\bb{Z}$ by vectors $f_i:=d_i^{-1}e_i^*$. One then takes $A_i:=\big(z^{f_i}\big) \in \kk[M^{\circ}]$. This seems to significantly complicate the exposition and the formulas that follow with little or no benefit for us, and so we do not follow this convention.
\end{rmk}

For any $j\in I$, we have a birational morphism $\mu_j^{\s{X}}\colon \s{X}_S\rar \s{X}_{\mu_j(S)}$, called a cluster $\s{X}$-mutation, defined by
\begin{gather*}
	\big(\mu_j^{\s{X}}\big)^* X_i' = 	X_i\big(1+X_j^{\sign(-\epsilon_{ij})}\big)^{-\epsilon_{ij}} \qquad \text{for} \quad i\neq j, \qquad \big(\mu_j^{\s{X}}\big)^* X_j' = X_j^{-1} .
\end{gather*}
Similarly, we can define a cluster $\s{A}$-mutation $\mu_j^{\s{A}}\colon \s{A}_S\rar \s{A}_{\mu_j(S)}$,
\begin{gather*}
	A_{j}\big(\mu_j^{\s{A}}\big)^* A_{j}' =\prod_{i\colon \epsilon_{ji}>0} A_{i}^{\epsilon_{ji}} + \prod_{i\colon \epsilon_{ji}<0} A_{i}^{-\epsilon_{ji}},\qquad \big(\mu_j^{\s{A}}\big)^*	A_{i}'=A_{i} \qquad \text{for} \quad i\neq j.
\end{gather*}

Now, the cluster $\s{X}$-variety $\s{X}$ is defined by using compositions of~$\s{X}$-mutations to glue~$\s{X}_{S'}$ to~$\s{X}_S$ for every seed $S'$ which is related to $S$ by some sequence of mutations. Similarly for the cluster $\s{A}$-variety $\s{A}$, with $\s{A}$-tori and $\s{A}$-mutations. The {\it cluster algebra} is the subalgebra of~$\kk[M]$ generated by the cluster variables~$A_i$ of every seed that we can get to by some sequence of mutations. In this context, the well-known Laurent phenomenon simply says that all the cluster variables are regular functions on~$\s{A}$. The ring of all global regular functions on~$\s{A}$ is called the {\it upper cluster algebra}.

On the other hand, the $X_i$'s do not always extend to global functions on $\s{X}$. When a monomial on a seed torus (i.e., a monomial in the $X_i$'s for a fixed seed) does extend to a global function on~$\s{X}$, we call it a {\it global monomial}, as in~\cite{GHK3}.

\subsubsection{Quivers and seeds}\label{quivers}

We now describe a standard way to represent the data of a seed with the data of a (decorated) quiver. Each seed vector $e_i$ corresponds to a vertex $V_i$ of the quiver. The number of arrows from~$V_i$ to~$V_j$ is equal to $\langle e_i , e_j\rangle$, with a negative sign meaning that the arrows actually go from~$V_j$ to~$V_i$. Each vertex $V_i$ is decorated with the number $d_i$. Furthermore, the vertices corresponding to frozen vectors are boxed. Observe that all the data of the seed can be recovered from the quiver.

Now, a seed is called {\it acyclic} if the corresponding quiver contains no directed paths that do not pass through any frozen (boxed) vertices. A cluster variety is called acyclic if any of the corresponding seeds are acyclic. It is easy to see that a~seed~$S$ is acyclic if and only if there is some closed half-space in $\?{N_2}$ which contains $v_i$ for every $i\in I\setminus F$.

\subsection{The geometric interpretation}\label{geom int}

As in \cite{GHK3}, for a lattice $L$ with dual $L^*$ and with $u \in L$, $\psi \in L^*$, and $\psi(u)=0$, define
\begin{gather*}
	\mu_{u,\psi,L}\colon \ T_L \dashrightarrow T_L, \\
	\mu_{u,\psi,L}^*\big(z^{\varphi}\big) = z^\varphi\big(1+z^{\psi}\big)^{-\varphi(u)} \qquad \text{for} \quad \varphi\in L^*.
\end{gather*}

One can check that the mutations above satisfy
\begin{gather*}
	\big(\mu_j^{\s{X}}\big)^*=\mu_{(\cdot,e_j),e_j,M}^*\colon \ z^{v} \mapsto z^{v}\big(1+z^{e_j}\big)^{-(v,e_j)},\\
	\big(\mu_j^{\s{A}}\big)^*=\mu_{e_j,( e_j,\cdot ),N}^*\colon \ z^{\gamma} \mapsto z^{\gamma}\big(1+z^{( e_j,\cdot)}\big)^{-\gamma(e_j)}.
\end{gather*}

\begin{dfn}\label{coprime}A seed $S$ is called {\it coprime} if $v_i$ is not a positive rational multiple of $v_j$ for any distinct $i,j \in I\setminus F$. $S$ is called {\it totally coprime} if every seed mutation equivalent to $S$ is coprime.
\end{dfn}

The following key lemma, compiled from Section~3 of~\cite{GHK3}, is what leads to the nice geometric interpretations of mutations and cluster varieties.

\begin{lem}[\cite{GHK3}]\label{mugeom}Suppose that $u$ is primitive in a lattice $L$. Let $\Sigma$ be a fan in $L$ with rays corresponding to $u$ and $-u$, and let $\TV(\Sigma)$ be the corresponding toric variety. Denote
\begin{gather*}
 F:=\big\{1+z^{\psi}=0\big\}\subset \TV(\Sigma),
\end{gather*}
and define
\begin{gather*}
H^+:=F\cap D_{u}.
\end{gather*}
Then the result of blowing up $H^+$, followed by blowing down the proper transform of $F$, is a new toric variety $\TV(\Sigma')$. Let $\?{\mu}_{u,\psi,L}\colon \TV(\Sigma)\dashrightarrow \TV(\Sigma')$ be the associated birational map. Then the mutation $\mu_{u,\psi,L}\colon T_L\rar T_L$ is the restriction of $\?{\mu}_{u,\psi,L}$ to the big torus orbits.

In general, since $\mu_{ku,\psi,L}=\mu_{u,\psi,L}^k$ $($the $k$-th power with respect to composition$)$, it follows that~$\mu_{ku,\psi,L}$ can be described by repeating this blowup-blowdown procedure $k$ times.

Furthermore, $\mu_j^{\s{X}}$ preserves the centers $\big\{1+z^{e_i}=0\big\}\cap D_{v_i}$ of the blowups corresponding to~$\mu_i^{\s{X}}$ for each $i \neq j$. If $S$ is totally coprime, then $\mu_j^{\s{A}}$ preserves the centers $\big\{1+z^{\?{e_i}}=0\big\}\cap D_{e_i}$ for the blowups corresponding to $\mu_i^{\s{A}}$ for each $i \neq j$.
\end{lem}

In the setup of the lemma above, recall that the projection $L\rar L/\bb{\bb{Z}}\langle u \rangle$ induces on $TV(\Sigma)$ a $\bb{P}^1$-fibration $\pi_u\colon \TV(\Sigma)\rar D_u$ with $D_u$ and $D_{-u}$ as sections. We find it helpful to think of $F$ as $\pi_u^{-1}(H^+)$, or alternatively, as the fibers of $\pi_u$ which intersect $H^+$, cf.\ Fig.~\ref{mutation-fig}.

\begin{figure}[t]\centering
\resizebox{5 in}{1.25 in}{
\xy
(-60,-30)*{}; (-25,-30)*{} **\dir{-}="Dun1";
(-60,-10)*{}; (-25,-10)*{} **\dir{-}="Dup1";
(-40,-8)*{}; (-40,-32)*{} **\dir{-}="F1";
(-60,-27)*{D_{u}};
(-60,-7)*{D_{-u}};
(-42,-20)*{F};
(-5,10)*{}; (30,10)*{} **\dir{-}="Dun2";
(-5,30)*{}; (30,30)*{} **\dir{-}="Dup2";
(15,31)*{}; (23,18)*{} **\dir{-}="F2";
(15,9)*{}; (23,22)*{} **\dir{-}="E2";
(-5,13)*{D_{u}};
(-5,33)*{D_{-u}};
(22,26)*{\wt{F}};
(22,14)*{\wt{E}};
(50,-30)*{}; (85,-30)*{} **\dir{-}="Dun3";
(50,-15)*{}; (85,0)*{} **\dir{-}="Dup3";
(70,-5)*{}; (70,-31)*{} **\dir{-}="E3";
(50,-28)*{D_{u}};
(50,-10)*{D_{-u}};
(68,-20)*{E};
{\ar (-11,4);(-19,-4)};
{\ar (36,4);(44,-4)};
(-40.1,-30)*{\bullet};
(-36,-32)*{H^+};
(21.7,20)*{\bullet};
(19,20)*{p};
(70,-6.5)*{\bullet};
(67,-4)*{H^-};
\endxy
}
\caption{A mutation involves blowing up a hypertorus $H^+$ in $D_{u}$ (left arrow) and then contracting the proper transform $\wt{F}$ of the fibers $F$ which hit $H_+$ (right arrow), down to a hypertorus $H^-$ in $D_{-u}$. $\wt{E}$~denotes the exceptional divisor, with $E$ being its image after the contraction of $\wt{F}$. The locus $p=\wt{E}\cap\wt{F}$ has codimension $2$ and does not appear in the cluster variety.}\label{mutation-fig}
\end{figure}

We now take a closer look at the case of $\s{X}$-mutations. Let $F:=\{\s{X}_j = -1\}$. Then Lemma~\ref{mugeom} tells us that $\big(\mu_j^{\s{X}}\big)^*$ corresponds to blowing up $H^+:=F \cap D_{v_j}$, followed by blowing down the proper transform of $F$, and repeating for a total of $d_j'$ times (with $F$ being replaced after each blowup-blowdown with the newest exceptional divisor). The new seed torus is only different from the old one in that it is missing the blown-down fibers of the initial $\bb{P}^1$-fibration, but has gained the exceptional divisor from the final blowup (except for the lower-dimensional set of points where this exceptional divisor intersects a blown-down fiber, represented by $p$ in Fig.~\ref{mutation-fig}).

Since the centers of the blowups corresponding to the other mutations have not changed, this shows that the cluster $\s{X}$-variety can be constructed, up to codimension $2$, as follows: For any seed $S$, take a fan in $M$ with rays generated by $\pm v_i$ for each $i$, and consider the corresponding toric variety. For each $i\in I\setminus F$, blow up the hypertorus $\{X_i=-1\}\cap D_{(\cdot,e_{i})}$ $d_i'$ times, and then remove the first $(d_i'-1)$ exceptional divisors. Then up to codimension $2$, the cluster $\s{X}$ variety is the complement of the proper transform of the toric boundary. We denote this complement by $\s{X}^{\star}$. We use $\s{A}^{\star}$ to denote the analogously constructed version of~$\s{A}$.

\begin{rmk}\label{CodimensionOne}In this construction of $\s{X}$, the centers for the hypertori we blow up may intersect if $(\cdot,e_i)=(\cdot,e_j)$ for some $i\neq j$, so some care must be taken regarding the ordering of the blowups. When we write $\s{X}^{\star}$, we implicitly assume that we have fixed some ordering of the blowups, and similarly for $\s{A}^{\star}$. Fortunately, this issue only matters in codimension at least $2$ (cf.~\cite{GHK3} for more details). However, when we consider fibers of $\s{X}$ under the map $\lambda$ introduced below, it is possible that some special fibers will have discrepancies in codimension $1$. As we will see below, $\s{A}$~is a~torsor over what is perhaps the ``most special'' fiber of $\s{X}$. The failure of mutations to preserve the centers of blowups for non-coprime~$\s{A}$, along with the resulting fact that $\s{A}$ and $\s{A}^{\star}$ may differ in codimension $1$, may be viewed as consequences of such codimension~$1$ issues in this special fiber.
\end{rmk}

\begin{rmk}\label{affinization}We have seen that codimension $2$ issues arise as a result of missing points like $p$ in Fig.~\ref{mutation-fig}, and also as a result of reordering the blowups. There are also missing contractible complete subvarieties~-- the $(d_j'-1)$ exceptional divisors we remove when applying $\big(\mu_j^{\s{X}}\big)^*$. We view these issues as being unimportant since they do not affect $\Gamma(\s{X},\s{O}_X)$. When we want to stress that we are only interested in $\s{X}$ or its fibers up to these issues, we will say ``up to irrelevant loci.''
\end{rmk}

\subsection{The cluster exact sequence}\label{cluster exact section}
Observe that for each seed $S$, there is a not necessarily exact\footnote{$\im(M)$ might not be saturated in $K_1^*$, resulting in torsion elements in the quotient.} sequence
\begin{gather*}
	0 \rar K_2 \rar N \stackrel{p_2^*}{\rar} M \rar K_1^* \rar 0.
\end{gather*}
Here, $M\rar K_1^*$ is the map dual to the inclusion $K_1\hookrightarrow N$. Tensoring with $\kk^*$ yields an exact sequence, and one can check (cf.\ Lemma~2.10 of~\cite{FG1}) that this sequence commutes with mutation. Thus, one obtains the exact sequence
\begin{gather*}
	1\rar T_{K_2} \rar \s{A} \stackrel{p_2}{\rar} \s{X} \stackrel{\lambda}{\rar} T_{K_1^*} \rar 1.
\end{gather*}

Let $\s{U}:=p_2(\s{A})=\s{X}_e:=\lambda^{-1}(e)\subset \s{X}$. The sequence $1\rar T_{K_2} \rar \s{A} \rar \s{U} \rar 1$, along with the partially compactified version in \cite[Section~2]{ManCox}, should be viewed as a generalization of the construction of toric varieties as quotients, with $\s{U}$ being the generalization of the toric variety.
 In this paper, we are particularly interested in the fibers of $\lambda$, but cf.\ Remark \ref{Cox} for more on how these relate to $\s{A}$.

\subsection{Looijenga interiors}\label{Looijenga seeds}

Section~5 of \cite{GHK3} shows that Looijenga interiors (i.e., log Calabi--Yau surfaces), as defined in Section~\ref{Introduction}, are exactly the surfaces (up to irrelevant loci, cf.\ Remark~\ref{affinization}) which arise as fibers of~$\lambda|_{\s{X}^{\star}}$ for rank $2$ cluster varieties. We explain this now.

\begin{dfns}
For a Looijenga pair $(Y,D)$ as in Section~\ref{Introduction}, we define a {\it toric blowup} to be a Looijenga pair $(\wt{Y},\wt{D})$ together with a birational map $\wt{Y}\rar Y$ which is a blowup at a nodal point of the boundary $D$, such that $\wt{D}$ is the preimage of $D$. Note that taking a toric blowup does not change the interior $U = Y\setminus D = \wt{Y}\setminus \wt{D}$. We also use the term toric blowup to refer to finite sequences of such blowups.

By a {\it non-toric blowup} $\big(\wt{Y},\wt{D}\big)\rar (Y,D)$, we will always mean a blowup $\wt{Y}\rar Y$ at a non-nodal point of the boundary $D$ such that $\wt{D}$ is the proper transform of $D$. Let $\big(\?{Y},\?{D}\big)$ be a~Looijenga pair where~$\?{Y}$ is a toric variety and~$\?{D}$ is the toric boundary. We say that a birational map $Y \rar \?{Y}$ is a {\it toric model} of $(Y,D)$ (or of $U$) if it is a finite sequence of non-toric blowups.

We say two Looijenga interiors $U_1$ and $U_2$ are deformation equivalent, or of the same \textit{deformation type}, if they admit deformation equivalent compactifications with the same boundary, i.e., if there is a family $(\s{Y},\s{D}) \rar S$ with~$S$ connected, with~$\s{D}\rar S$ the trivial family with fibers~$D$, and with compactificaitons of $U_1$ and $U_2$ appearing as fibers.
\end{dfns}

\begin{lem}[{\cite[Proposition~1.19]{GHK1}}]\label{ToricModel} Every Looijenga pair has a toric blowup which admits a~toric model.
\end{lem}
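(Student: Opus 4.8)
The plan is to start with an arbitrary Looijenga pair $(Y,D)$ and build, by a sequence of blowups at nodes of the boundary, a new pair $(\wt Y,\wt D)$ that maps down to a toric pair $(\?Y,\?D)$ via a sequence of non-toric blowups. The natural inductive quantity is the number of boundary components $n = |\{D_i\}|$. First I would handle the case $n=1$ (so $D$ is an irreducible nodal rational curve): one blowup at the node separates the two branches, producing a pair with $n=2$ boundary components, so after a single toric blowup we may assume $n\geq 2$. Then I would work by induction/descent on the number of \emph{non-toric blowups needed}, i.e.\ I would try to directly identify a toric model after allowing some toric blowups.

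The core of the argument is a blowing-down procedure. Given $(Y,D)$ with $D=D_1+\cdots+D_n$ a cycle, I would look for a component $D_i$ which is a $(-1)$-curve. If such a $D_i$ exists with $n\geq 3$, contracting it yields a Looijenga pair $(Y',D')$ with $n-1$ boundary components (the two neighbors of $D_i$ now meet at a node), and the contraction $Y\to Y'$ is a single non-toric blowup in the reverse direction — wait, it is a blowup at a \emph{nodal} point, hence a toric blowup in reverse; this is exactly what we are allowed to do. So as long as we can keep contracting $(-1)$-curves in the boundary, we reduce $n$. The delicate point is: what if no boundary component is a $(-1)$-curve, yet $(Y,D)$ is not toric? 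Here I would invoke the classification of relatively minimal rational surfaces: a smooth rational surface with no $(-1)$-curves at all is $\mathbb{P}^2$ or a Hirzebruch surface $\mathbb F_m$, and for such surfaces with an anticanonical cycle $D$ one checks directly that $(Y,D)$ is toric after at most a toric blowup (the anticanonical cycle on $\mathbb F_m$ or $\mathbb P^2$ can be arranged to be the toric boundary, possibly after blowing up a node to pass from a triangle to a square, etc.). If $Y$ does contain a $(-1)$-curve $C$ but it is \emph{not} in the boundary, then $C\cdot D = C\cdot(-K_Y) = 1$, so $C$ meets $D$ at exactly one point, in the interior of one component $D_j$ (or transversally at a node, which one excludes by a small blowup bookkeeping); contracting $C$ is then a non-toric blowup in reverse, reducing the Picard rank while keeping $D$ a cycle of the same length. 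Iterating, the Picard rank strictly drops, so the process terminates.

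Assembling these pieces: after finitely many toric blowups (to handle $n=1$ and to pass triangles to squares when needed) and finitely many contractions of interior $(-1)$-curves (non-toric blowups) and boundary $(-1)$-curves (toric blowups), we arrive at a relatively minimal model which is toric; tracing the construction backwards exhibits $(\wt Y,\wt D)$ as a toric blowup of $(Y,D)$ together with a toric model $\wt Y\to\?Y$. The main obstacle I anticipate is the bookkeeping in the case analysis for small $n$ (especially $n=1,2,3$) and ensuring that the interior $(-1)$-curve $C$ really meets the boundary in the smooth locus of a single component rather than at a node — one must argue that if $C$ passed through a node one could instead first do a toric blowup to move the situation, or that $C\cdot D=1$ forces a single transverse intersection point away from the nodes. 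Since this is precisely \cite{GHK1}, Proposition 1.19, I will simply cite it, but the sketch above is the shape of the proof.
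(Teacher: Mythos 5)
The paper does not prove this statement at all; it is quoted verbatim from \cite{GHK1} (Prop.~1.19), so there is no in-paper argument to compare against. Your sketch is the standard Looijenga/GHK-style proof of that proposition: run an MMP adapted to the pair, contracting boundary $(-1)$-components (reverse toric blowups) and interior $(-1)$-curves (reverse non-toric blowups) until the Picard rank bottoms out, then handle $\bb{P}^2$ and $\bb{F}_m$ by hand. Two of your worries dissolve immediately. First, an interior $(-1)$-curve $C$ automatically meets $D$ at a single smooth point of a single component: adjunction gives $C\cdot D=-C\cdot K_Y=1$, while passing through a node (or meeting $D$ non-transversally) would force local intersection multiplicity at least $2$. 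Second, the process never gets stuck before reaching a relatively minimal surface: if $\rank \Pic(Y)\geq 2$ and $Y\neq \bb{F}_m$ there is a smooth $(-1)$-curve, and since for $n=1$ the divisor $D$ is nodal (hence not a smooth $(-1)$-curve), any $(-1)$-curve is either a contractible boundary component ($n\geq 2$) or interior.

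The one step that would fail as literally written is your treatment of the base case: it is \emph{not} true that an anticanonical cycle on $\bb{P}^2$ or $\bb{F}_m$ ``is toric after at most a toric blowup.'' For instance $(\bb{P}^2, C)$ with $C$ a nodal cubic has charge $2$ (by Lemma \ref{ch} applied after one toric blowup: $n=2$, $\Tr(H)=4$), so every toric model of every toric blowup of it involves two genuinely non-toric blowups; the same goes for a conic plus a line, or a nodal $(2,2)$-curve on $\bb{P}^1\times\bb{P}^1$. Moreover, toric blowups of $(\bb{P}^2,C)$ do not immediately produce contractible interior $(-1)$-curves (on $\bb{F}_1$ the only $(-1)$-curve is the section, which lies in the boundary), so the induction genuinely terminates at these non-toric pairs and one must exhibit their toric models explicitly --- classically, via the tangent lines to the branches of $C$ at the node, whose proper transforms become contractible after enough corner blowups. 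This is where the actual content of \cite{GHK1}'s Prop.~1.19 lives, and it is the part your sketch waves away with an incorrect claim; since you ultimately cite the reference, the lemma stands, but the sketch should not be read as a complete proof.
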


According to \cite{GHK2}, all deformations of $U$ come from sliding the non-toric blowup points along the divisors $\?{D}_i\subset D$ without ever moving them to the nodes of $D$. We call $U$ {\it positive} if some deformation of $U$ is affine. This is equivalent to saying that $D$ supports an effective $D$-ample divisor, meaning a divisor whose intersection with each component of~$D$ is positive. We will always take the term $D$-ample to imply effective. See Section~\ref{PositiveCase} for other equivalent characterizations of~$U$ being positive.

To see that Looijenga interiors are the same as fibers of $\lambda|_{\s{X}^{\star}}$ for rank $2$ cluster varieties, up to irrelevant loci, we will need the following lemma from~\cite{GHK3}.

\begin{lem}[{\cite[Lemma 5.1]{GHK3}}]\label{Fibers}
Let $H_+$ be the intersection of the zero set of $1+z^{e_i}$ with $D_{v_i}$. Let $t\in T_{K_1^*}$. Then $H_+\cap \lambda^{-1}(t)$ consists of $|\?{e_i}|$ points, where $|\?{e_i}|$ is the index of $\?{e_i}:=p_1^*(e_i)$ in~$\?{N_1}$ $($i.e., $\?{e_i}$ is $|\?{e_i}|$ times a primitive vector in $\?{N_1})$.\footnote{If $\kk$ is not algebraically closed, Lemma \ref{Fibers} might not be true, but it at least holds for $\?{e_i}$ primitive in $\?{N_1}$.}
\end{lem}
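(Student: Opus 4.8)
The plan is to recognise $H_+\cap\lambda^{-1}(t)$ as a fiber of an isogeny of algebraic tori and then to compute the degree of that isogeny, which I claim equals $|\?{e_i}|$. I would begin by isolating the relevant torus. Write $v_i=d_i'v_i'$ with $v_i'$ primitive in $M$, so that $D_{v_i}$ is the toric divisor of the ray $\bb R_{\geq0}v_i'$; its open orbit $D_{v_i}^{\circ}$ is the torus with cocharacter lattice $M/\bb Z v_i'$ and character lattice $(v_i')^{\perp}:=\{n\in N:\langle v_i',n\rangle=0\}$. Since $\langle v_i',n\rangle=\tfrac1{d_i'}(n,e_i)$, one has $(v_i')^{\perp}=\{n\in N:(n,e_i)=0\}$, and because $(e_i,e_i)=0$ the vector $e_i$ itself lies in $(v_i')^{\perp}$; hence $z^{e_i}$ restricts to a genuine character of $D_{v_i}^{\circ}$, and $H_+$ is the hypertorus $\{z^{e_i}=-1\}\subset D_{v_i}^{\circ}$ (the one blown up in Figure~\ref{mutation-fig}). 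I also need $\lambda$ to restrict to a homomorphism on $D_{v_i}^{\circ}$, which holds because $v_i'\in K_1^{\perp}$: indeed $K_1^{\perp}=\?{N_1}^{\sat}$ (skew-symmetry of $(\cdot,\cdot)$ gives $\?{N_1}\subseteq K_1^{\perp}$, and the ranks agree), while $v_i=p_2^{*}(e_i)\in\?{N_2}$ together with the identification $\?{N_{2,\bb R}}\cong\?{N_{1,\bb R}}$ forces $v_i'\in\?{N_1}^{\sat}=K_1^{\perp}$; so the cocharacter map $M\to K_1^{*}$ defining $\lambda$ kills $v_i'$ and descends, and on character lattices $\lambda|_{D_{v_i}^{\circ}}$ is just the inclusion $K_1\into(v_i')^{\perp}$.

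Next I would combine these into the morphism
\[ \phi:=\bigl(\lambda|_{D_{v_i}^{\circ}},\,z^{e_i}\bigr)\colon D_{v_i}^{\circ}\longrightarrow T_{K_1^{*}}\times\kk^{*}, \]
a homomorphism of tori whose character-lattice map is $\eta\colon K_1\oplus\bb Z\to(v_i')^{\perp}$, $(n,k)\mapsto n+ke_i$, so that $H_+\cap\lambda^{-1}(t)=\phi^{-1}(t,-1)$ for every $t\in T_{K_1^{*}}$. Now $\eta$ is injective because $\?{e_i}=p_1^{*}(e_i)\neq0$ (hence $\bb Z e_i\cap K_1=0$), and — this is exactly where the rank-$2$ hypothesis enters — $\rank\?{N_1}=2$ makes $\rank(K_1\oplus\bb Z)=\rank N-1=\rank(v_i')^{\perp}$, so $\eta$ has finite cokernel. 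Thus $\phi$ is an isogeny; since $\kk$ has characteristic $0$ it is finite étale, so every fiber, in particular $\phi^{-1}(t,-1)$, consists of exactly $\deg\phi$ points, where $\deg\phi=[\,(v_i')^{\perp}:K_1+\bb Z e_i\,]$ is the index of the image of $\eta$.

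It then remains to evaluate this index. Applying $p_1^{*}$, which restricts on $(v_i')^{\perp}$ to a surjection onto $p_1^{*}((v_i')^{\perp})$ with kernel $K_1$ and sends $e_i$ to $\?{e_i}$, turns the index into $[\,p_1^{*}((v_i')^{\perp}):\bb Z\,\?{e_i}\,]$. By the first item of Observations~\ref{Cluster Obs}, $\langle\cdot,\cdot\rangle$ descends to a nondegenerate skew-symmetric form $\omega$ on $\?{N_1}$, and from $(n,e_i)=d_i\langle n,e_i\rangle=d_i\,\omega\bigl(p_1^{*}(n),\?{e_i}\bigr)$ we get $p_1^{*}\bigl((v_i')^{\perp}\bigr)=\{y\in\?{N_1}:\omega(y,\?{e_i})=0\}$. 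Since $\omega$ is a nondegenerate skew form on the rank-$2$ lattice $\?{N_1}$, this locus is precisely the saturation of $\bb Z\,\?{e_i}$ inside $\?{N_1}$, whose index over $\bb Z\,\?{e_i}$ is $|\?{e_i}|$ by definition; hence $\deg\phi=|\?{e_i}|$, proving the claim. Over a non-algebraically-closed field the same argument gives an isogeny of degree $|\?{e_i}|$, whose fiber need not split into $|\?{e_i}|$ rational points unless $\?{e_i}$ is primitive, which accounts for the footnote.

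The step I expect to require the most care is the second paragraph: checking that $\lambda$ genuinely restricts to a torus homomorphism on the boundary orbit and reading off its character-lattice map correctly, and, above all, pinpointing that the hypothesis $\rank\?{N_1}=2$ is exactly what makes $\phi$ finite — for seeds of higher rank the set $H_+\cap\lambda^{-1}(t)$ would fail to be zero-dimensional, so no count of this shape could hold.
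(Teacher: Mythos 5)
The paper does not actually prove Lemma \ref{Fibers}; it is imported verbatim from \cite{GHK3} (their Lemma 5.1), so there is no in-paper argument to compare against. Your proof is a correct and essentially complete self-contained argument: realizing $H_+\cap\lambda^{-1}(t)$ as a fiber of the isogeny $\phi=(\lambda|_{D_{v_i}^{\circ}},z^{e_i})$, reducing its degree to the index $[\,(v_i')^{\perp}:K_1+\bb Z e_i\,]$, and evaluating that index by pushing forward along $p_1^*$ and using the nondegenerate form $\omega$ on the rank-$2$ lattice $\?{N_1}$ from Observations \ref{Cluster Obs} is exactly the right mechanism, and your identification of where the rank-$2$ hypothesis and the algebraically-closed hypothesis enter is on target.

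One intermediate assertion is wrong, though it does not damage the proof. You claim $K_1^{\perp}=\?{N_1}^{\sat}$, justified by ``skew-symmetry of $(\cdot,\cdot)$'' and the identification $\?{N}_{2,\bb R}\cong\?{N}_{1,\bb R}$. But $(\cdot,\cdot)$ is not skew-symmetric (only $\langle\cdot,\cdot\rangle$ is; $\epsilon_{ij}=-\tfrac{d_j}{d_i}\epsilon_{ji}$), and the canonical isomorphism $\?{N}_{2,\bb R}\cong\?{N}_{1,\bb R}$ of Observations \ref{Cluster Obs} is \emph{not} an equality of subspaces of $M_{\bb R}$: one has $\?{e_j}=\delta^*(\langle e_j,\cdot\rangle)$ with $\delta^*=\mathrm{diag}(d_k)$, and for non-equal multipliers $\delta^*$ need not preserve $\mathrm{span}\{\langle e_j,\cdot\rangle\}$ (e.g.\ $\langle e_i,e_j\rangle=1$ for $i<j$ in $\bb Z^3$ with $(d_1,d_2,d_3)=(1,2,1)$ gives $\?{e_1}=(0,2,1)\notin K_1^{\perp}=\{x-y+z=0\}$). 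The correct statement is $K_1^{\perp}=\?{N_2}^{\sat}$, since for $n\in K_1$ one has $(n,w)=\sum_j w_jd_j\langle n,e_j\rangle=0$; this gives $v_i\in\?{N_2}\subseteq K_1^{\perp}$ directly, and saturatedness of the annihilator then yields $v_i'\in K_1^{\perp}$, which is all you use. With that one-line repair the argument goes through as written.
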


Now, in light of Lemmas \ref{ToricModel} and~\ref{Fibers} and the description of $\s{X}^{\star}$ in Section~\ref{geom int}, it is clear that for $\langle \cdot,\cdot \rangle$ rank $2$, every fiber of $\lambda|_{\s{X}^{\star}}$ is a Looijenga interior, up to irrelevant loci. For the converse, we use the following:

\begin{constr}\label{LooijengaConstr}Following Construction 5.3 of \cite{GHK3}, let $U$ be a Looijenga interior. Choose a~compactification $(Y,D)$ admitting a toric model $\pi\colon (Y,D)\rar \big(\?{Y},\?{D}\big)$. Let $N_{\?{Y}}$ be the cocha\-rac\-ter lattice of $\?{Y}$. Let $(\cdot \wedge \cdot)\colon N_{\?{Y}}^2\rar \bb{Z}$ denote the standard wedge form.

Suppose that $\pi$ consists of $d_i'$ non-toric blowups at a point $q_i\in \?{D}_{u_i}$, $i=1,\dots,s$, where $\?{D}_{u_i}$ is the divisor corresponding to the ray $\bb{R}_{\geq 0} u_i \subset N_{\?{Y},\bb{R}}$, $u_i\in N_{\?{Y}}$ primitive.
 We can assume that the $q_i$'s are distinct. We extend this to a set $\?{E}:=\{u_1,\dots,u_s,u_{s+1},\dots,u_m\}$ of not necessarily distinct primitive vectors generating $N_{\?{Y}}$, and we choose positive integers $d_{s+1}',\dots,d_m'$.

Now, let $S$ be the seed with $N$ freely generated by a set $E=\{e_1,\dots,e_m\}$, $I=\{1,\dots,m\}$, $F:=\{s+1,\dots,m\}$, $\{d_i'\}$ as above, and $\langle e_i,e_j \rangle := u_i \wedge u_j$.
 Note that we can identify~$\?{N_2}^{\sat}$ with~$N_{\?{Y}}$ via the identification $v_i=d_i'u_i$. Similarly, we can identify $\?{N_1}\cong N/K_1$ with~$N_{\?{Y}}$ via the identification $\langle e_i,\cdot\rangle = u_i$. Thus, each $\?{e_i}$ is primitive in $\?{N_1}$.

 Using $S$ to construct $\s{X}$, the interpretation of $\s{X}$-mutations from Section~\ref{geom int}, together with Lemma~\ref{Fibers}, reveals that~$U$ is deformation equivalent to the general fibers of~$\lambda$, up to irrelevant loci. A bit more work shows that $U$ is in fact isomorphic to some fiber of~$\s{X}^{\star}$, hence isomorphic to the corresponding fiber of~$\s{X}$ up to irrelevant loci.
\end{constr}

This construction shows that:

\begin{thm}\label{Looijenga}Every Looijenga interior can be identified with a fiber of some $\s{X}^{\star}$ associated to a rank $2$ cluster $\s{X}$-variety. Conversely, up to irrelevant loci, the fibers of $\s{X}^{\star}$ for rank $2$ cluster $\s{X}$-varieties are Looijenga interiors, and general fibers of $\s{X}$ are Looijenga interiors up to codimension $2$.
 \end{thm}
For the last statement, we use that $\s{X}\setminus \s{X}^{\star}$ has codimension $2$ and consists of collections of complete interior curves supported in fibers of $\lambda$. Hence, general fibers of $\s{X}$ and $\s{X}^{\star}$ are equal.

\begin{eg}\label{cubic cluster}
Consider the case where $Y$ is a cubic surface, obtained by blowing up $2$ points on each boundary divisor of $\big(\?{Y}\cong \bb{P}^2,\?{D}=D_1+D_2+D_3\big)$. We can take
\begin{gather*}
	\?{E}=\{(1,0),(1,0),(0,1),(0,1),(-1,-1),(-1,-1)\},
\end{gather*}
with each $d_i=d_i'=1$ and $F$ empty. Then the fibers of the resulting $\s{X}$-variety $\s{X}_1$ correspond to the different possible choices of blowup points on the $D_i$'s. The fiber $\s{U}$ is very special, having four $(-2)$-curves. If we instead take $\?{E}=\{(1,0),(0,1),(-1,-1)\}$ with $\langle \cdot,\cdot\rangle$ given by
	$\left(\begin{smallmatrix}
		0 &1 &-1 \\
	 -1 &0 &1 \\
	 1 &-1 &0
		\end{smallmatrix}\right)$,
		and each $d_i=d_i'=2$, then the fibers of the resulting $\s{X}$-variety~$\s{X}_2$ include only the surfaces constructed by blowing up the same point twice on each $D_i$ and then removing the three resulting $(-2)$-curves. $\s{U}$ is the fiber where the blowup points are colinear and so there is one remaining $(-2)$-curve.
		
The deformation type of the fibers of $\s{X}^{\star}$ has only changed by the removal of certain $(-2)$-curves, i.e., by some irrelevant loci. Note that $\s{X}^{\star}_2=\s{X}_2$, and that $\s{X}_2$ can be identified (after filling in the removed $(-2)$-curves) with a subfamily of $\s{X}^{\star}_1$ whose fibers do not agree with those of $\s{X}_1$ in codimension~$1$.

These examples are well-known: the former corresponds to the Teichm\"{u}ller space of the four-punctured sphere, while the latter corresponds to the Teichm\"{u}ller space of the once-punctured torus (cf.\ \cite[Section~2.7]{FG1}).
\end{eg}

Recall the definition of a coprime seed from Definition \ref{coprime}. Note that a seed being coprime means that for each $i\in I\setminus F$, $d_i'$ is the total number of non-toric blowups taken on the divisor corresponding to $v_i$. We now define a notion which in a sense means being as far from coprime as possible (although the two are not mutually exclusive).

\begin{dfn}\label{EquivalentClusters} We say a seed $S$ is {\it maximally factored} if each $d_i'=1$. Two seeds $S_1$ and $S_2$ (along with the associated cluster varieties) will be called {\it fiberwise-equivalent} if the general fibers of the corresponding $\s{X}$-varieties $\s{X}_1$ and $\s{X}_2$ are of the same deformation type, up to irrelevant loci.
\end{dfn}

\begin{eg}The first seed for the cubic surface in Example \ref{cubic cluster} is maximally factored, while the second seed is totally coprime. The two seeds are clearly fiberwise-equivalent since they both correspond to the cubic surface.
\end{eg}

\looseness=-1 Example \ref{cubic cluster} above demonstrates that we can often change the number of vectors in a~seed without changing the fiberwise-equivalence class of the fibers. For example, consider a~seed $\{N=\bb{Z}\langle E \rangle, I,E=\{e_1,\dots,e_m\},F, \langle \cdot, \cdot \rangle, \{d_i\}\}$ with each $d_i=d_i'$ such that each $\?{e_i}$ is primitive\footnote{Every rank $2$ seed is fiberwise-equivalent to one with this primitivity condition because they all have Looijenga pairs as the fibers of their corresponding $\s{X}^{\star}$. However, this condition can easily be avoided.} in $\?{N_1}$. Given a collection of partitions $d_i=d_{i,1}+\dots+d_{i,b_i}$, $d_{i,j}\in \bb{Z}_{\geq 0}$, we can define a~new seed $S'$ as follows: Let $E'\colon \{e_{i,j}\}$, $i=1,\dots,m$, $j=1,\dots,b_i$, and $N':=\bb{Z}\langle E' \rangle$. Define $\langle e_{i_1,j_1},e_{i_2,j_2}\rangle':=\langle e_{i_1},e_{i_2}\rangle$. We say the pair $(i,j)\in F'$ if $i\in F$. Finally, $d_{i,j}$ is as in the partitions. The corresponding space $\s{X}'$ is fiberwise-equivalent to the original $\s{X}$. By this method, we can show that:
\begin{prop}\label{CoprimeEquiv}
Every seed is fiberwise-equivalent to a coprime seed and to a maximally factored seed. Furthermore, by a sequence of mutation equivalences and fiberwise-equivalences, every seed can be related to a totally coprime seed.
\end{prop}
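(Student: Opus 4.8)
The plan is to translate the statement into the geometry of Looijenga interiors via Theorem~\ref{Looijenga} and then exploit the flexibility built into Definition~\ref{EquivalentClusters}. By Theorem~\ref{Looijenga} the generic fiber $U$ of $\s{X}^{\ft}_S$ is a Looijenga interior, and by \cite{GHK2} its deformation type is determined by the purely combinatorial data of a toric model of $U$: for each ray of the fan of $\?Y$, the number of non-toric blowups over the corresponding boundary divisor, together with how these are grouped into infinitely near towers over distinct points of that divisor. Moreover the blowup points may be slid along their divisors, never onto a node, without leaving the deformation class. Consequently I may collide two blowup points lying on a common divisor --- producing, up to the chain of $(-2)$-curves thereby created (which is an irrelevant locus), a single point carrying a longer tower --- or, conversely, spread a tower into separate points; in each case the original seed and the seed built from the modified toric model via Construction~\ref{LooijengaConstr} are equivalent in the sense of Definition~\ref{EquivalentClusters}. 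These moves are the geometric incarnation of the partition construction preceding the statement, and of its reverse.

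For the first two assertions I would construct the required seeds directly from a toric model of $U$. To obtain a coprime seed, first collide, on each boundary divisor $\?D_u$ of $\?Y$, all non-toric blowups into a single tower over one point; then apply Construction~\ref{LooijengaConstr} with $\?E$ consisting of the (now pairwise distinct) directions $u$ that carry a tower, together with whatever frozen directions are wanted. The resulting non-frozen vectors $v_i = d_i'u_i$ lie on pairwise distinct rays, so this seed is coprime, and it is equivalent to $S$. To obtain a maximally factored seed, instead spread every tower into its constituent blowups at distinct points and apply Construction~\ref{LooijengaConstr} with $\?E$ listing these directions with multiplicity and every modified multiplier $d_i'$ equal to $1$; the output is maximally factored and equivalent to $S$. (Alternatively, starting from the coprime seed just produced --- which has each $\?{e_i}$ primitive --- one may apply the partition construction with each partition $d_i = 1 + \cdots + 1$ and check that the resulting modified multipliers are again $1$.)

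For the totally coprime assertion I would argue by descent on the number $r$ of non-frozen vectors, over all seeds related to $S$ by mutations and by equivalences of Definition~\ref{EquivalentClusters}. If the current seed is not totally coprime then, by Definition~\ref{coprime}, its mutation class contains a non-coprime seed $S'$; say the non-frozen vectors $v_a, v_c$ of $S'$ span a common ray. Then the $\s{X}^{\ft}$-construction of $S'$ blows up two towers over two distinct points of one and the same toric divisor, and colliding those points is an equivalence that merges the two towers into one. Re-building from this toric model via Construction~\ref{LooijengaConstr} yields a seed equivalent to $S'$, hence still related to $S$ by mutations and equivalences, but with strictly fewer non-frozen vectors. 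Since $r$ is a positive integer this cannot continue indefinitely, so after finitely many such interventions the current seed has the property that \emph{every} seed in its mutation class is coprime --- that is, it is totally coprime.

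I expect the last step to be the main obstacle. Its two soft points are: verifying rigorously that ``colliding two towers over a common divisor'' is an equivalence in the precise sense of Definition~\ref{EquivalentClusters} --- i.e. that the generic $\s{X}^{\ft}$-fiber changes only through a slide of blowup points and through irrelevant loci (the $(-2)$-chains produced in the collision, and possibly a reordering of blowups as in Remark~\ref{CodimensionOne}) --- and confirming that the descent really terminates at a seed whose entire mutation class, not merely its immediate mutations, is coprime. Both should follow from the dictionary of \S\ref{geom int} together with \cite{GHK3} and \cite{GHK2}, but making them precise is where the real work lies.
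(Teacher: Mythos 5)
Your proposal is correct and follows essentially the same route as the paper: the first two assertions come from merging or splitting the blowup towers on each divisor (the geometric form of the partition construction stated just before the proposition), and the totally coprime statement is obtained by the identical descent --- mutate to a non-coprime seed, replace it by an equivalent coprime seed with strictly fewer non-frozen vectors, and terminate because that count is a positive integer. The paper's own proof is just this argument stated tersely (``Since $S''$ has lower dimension than $S$, this process terminates''), so the soft points you flag are left implicit there as well.
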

\begin{proof}For the latter statement, if $S$ is not totally coprime, we mutate to a seed $S'$ which is not coprime, then apply the first statement to take a fiberwise-equivalent seed $S''$ which is coprime. We repeat this if $S''$ is not totally coprime. Since $S''$ has lower dimension than $S$, this process terminates.
\end{proof}

\begin{rmk}\label{Cox}According to \cite[Section~4]{GHK3} and \cite{ManCox}, $\Gamma(\s{A}^{\star},\s{O}_{\s{A}^{\star}})$ is the Cox ring for $\s{X}^{\star}_e$, roughly, $\bigoplus_{\s{L}\in \Pic(\s{X}_e^{\star})} \Gamma(\s{X}_e^{\star},\s{L})$. Similarly over other points of $T_{K_1}^*$ besides the identity $e$ (in fact, over generic points we can drop the superscript $\left.^\star\right.$). Here, the ``irrelevant loci'' actually are relevant since they affect the Picard group. Replacing a maximally factored seed $S$ with some fiberwise-equivalent seed $S'$ corresponds to restricting to some sublattice of $\Pic(\s{X}_e^{\star})$, hence, some corresponding subring of $\Gamma(\s{A}^{\star},\s{O}_{\s{A}^{\star}})$. Alternatively, these $\s{A}$-spaces for $S$ and $S'$ are related by a~procedure introduced in \cite{Dup}, now called ``folding'' in the cluster literature.
\end{rmk}

\subsubsection{The canonical intersection form}\label{canonical intersection}

For $S$ a maximally factored rank $2$ seed and $(Y,D)$ a corresponding Looijenga pair, \cite{GHK3} describes a natural way to identify $K_2:=\ker(p_2^*)$ with $D^{\perp}:=\{C\in A_1(Y,\bb{Z})\,|\,C\cdot D_i =0 \,\forall\,i\}$, thus inducing a canonical symmetric bilinear form $Q$ on $K_2$. This identification of $K_2$ with $D^{\perp}$ is as follows: an element $v:=\sum a_i e_i$ of $K_2$ corresponds to a relation $\sum a_{i} v_i = 0$ in $\?{N}_2^{\sat}$, which we recall from Construction~\ref{LooijengaConstr} can be identified with $N_{\?{Y}}$, where $Y\rar \?{Y}$ is a toric model corresponding to~$S$. Standard toric geometry says that this determines a unique curve class $C_v$ in $\pi^{*}[A_1(\bar{Y})]$ such that $C_v\cdot D_i = \sum a_{j}$ for each $i$, where the sum is over all $j$ such that $D_{v_j}=D_i$. So we can define an isomorphism $\iota\colon K_2 \cong D^{\perp}$ by
\begin{gather*}
	v\mapsto C_v - \sum_i a_{i} E_{i},
\end{gather*}
where $E_i$ is the exceptional divisor corresponding to mutating with respect to $e_i$.

Finally, for $u_1,u_2 \in K_2$, define $Q(u_1,u_2) = \iota(u_1) \cdot \iota(u_2)$. We will see in Section~\ref{classification} that $D^{\perp}$ together with this intersection pairing tells us quite a bit about the deformation type of $U$. In particular, \cite{GHK3} tells us that $U$ is positive if and only if $Q$ is negative definite.

Recall that varying the fiber of $\s{X}$ corresponds to changing the choices of non-toric blowup points on $D$. For some choices of blowup points, certain classes $C$ in $D^{\perp}$ may be represented by effective curves. Let $D^{\perp}_{\Eff} \subseteq D^{\perp}$ be the sublattice generated by the curve classes which are represented by an effective curve on some fiber.

\begin{eg}For the seed from Example \ref{cubic cluster}, $K_2$ is generated by $\{e_2-e_1,e_4-e_3,e_6-e_5,e_1+e_3+e_5\}$. The corresponding curves in $D^{\perp}$ are $\{E_1-E_2,E_3-E_4,E_5-E_6,L-E_1-E_3-E_5\}$, where $E_i$ is the exceptional divisor of the blowup corresponding to $e_i$, and $L$ is a generic line in $\?{Y}\cong \bb{P}^2$. Using $E_i\cdot E_j = -\delta_{ij}$, $L\cdot L=1$, and $L\cdot E_i=0$ for each $i$, one easily checks that this lattice has type $D_4$. On the special fiber $\s{U}$, these four curve classes are effective, so $D^{\perp}_{\Eff} = D^{\perp}$.
\end{eg}

\subsection{Tropicalizations of cluster varieties}\label{ClusterTrop}
\cite{FG1} describes {\it tropicalizations} $\s{A}^{\trop}$ and $\s{X}^{\trop}$ of the spaces $\s{A}$ and $\s{X}$, respectively. Given a seed~$S$, $\s{A}^{\trop}$ can be canonically identified as an integral piecewise-linear manifold with $N_{\bb{R},S}$, and the integral points $\s{A}^{\trop}(\bb{Z})$ of the tropicalization are identified with $N_S$. For a different seed~$\mu_j(S)$, the identification is related by the tropicalization of $\mu_j^{\s{A}}$. This turns out to be the integral piecewise-linear function $\?{\mu_j^{\vee}}\colon N_{\bb{R}}\rar N_{\bb{R}}$: that is, the Langlands dual seed mutation, with the overline indicating that $e_j$ is mapped by the same piecewise-linear function as the other vectors, rather than being negated. Similarly for $\s{X}^{\trop}$ and $\s{X}^{\trop}(\bb{Z})$ using $M_{\bb{R},S}$, $M_S$, and the dual seed mutations. We will use the subscript $S$ to indicate that we are equipping the tropical space with the vector space structure corresponding to the seed $S$.

Our interest in this paper is primarily with the fibers $U$ of $\lambda$. $U^{\trop}$ can be canonically identified\footnote{Another perspective which might be worth exploring in the future would be to identify the tropicalizations of different fibers of $\lambda$ with different fibers of $\lambda^*$, with only the fiber over $e$ corresponding to what we call $U^{\trop}$ here.} with $\?{N_2}\otimes \bb{R} = p_2^*(\s{A}^{\trop}) \subset \s{X}^{\trop}$. Here, $U^{\trop}(\bb{Z})$ is identified with $N_2^{\sat}$, as evidenced in Construction~\ref{LooijengaConstr}. We will spend Section~\ref{tropu} analyzing $U^{\trop}$ in the rank $2$ cases. \cite{GHK1} has shown that in these cases, $U^{\trop}$ has a canonical integral linear structure which is closely related to the geometry of the compactifications $(Y,D)$.

\subsection{The cluster modular group}\label{CMGdef}

A {\it seed isomorphism} $h\colon S\rar S'$ is an isomorphism of the underlying lattices which takes (frozen) seed vectors to (frozen) seed vectors (thus inducing a bijection $h\colon I\rar I'$ taking $F$ to $F'$), such that $d_i=d_{h(i)}$ and $\langle e_i,e_j\rangle = \langle h(e_i),h(e_j)\rangle'$. This induces isomorphisms $h\colon \s{X}\rar \s{X}'$ and $h\colon \s{A}\rar \s{A}'$ given by
	$h^{*}X'_{h(i)} = X_{i}$ and $h^{*}A'_{h(i)} = A_{i}$, respectively, as well as an isomorphism from $\s{U}:=p_2(\s{A})\subset \s{X}$ to $\s{U}':=p_2(\s{A}') \subset \s{X}'$. By a \textit{cluster isomorphism}, we mean these induced isomorphisms of the $\s{X}$ and $\s{A}$ spaces. A {\it seed transformation} is a composition of seed mutations and seed isomorphisms, and a {\it cluster transformation} is a composition of cluster mutations and cluster isomorphisms (i.e., the corresponding maps on $\s{A}$ and $\s{X}$). By a {\it seed auto-transformation}, we mean a seed transformation from a seed to itself, and similarly for a \textit{cluster auto-transformation}. A {\it trivial} seed auto-transformation is a seed transformation which acts\footnote{Here, we do not view mutations as acting on $U^{\trop}$. Rather, each mutation-equivalent seed $S$ gives a~piecewise-linear identification of $U^{\trop}$ with a lattice $U_S^{\trop}$, and a seed auto-transformation $S\rar S'$ induces a~map \mbox{$U_S^{\trop}\rar U_{S'}^{\trop}$}, hence a piecewise-linear automorphism of $U^{\trop}$ (in fact, this is a linear automorphism, cf.\ Lemma~\ref{oil}).} trivially on $\s{X}^{\trop}$. Similarly, a \textit{trivial cluster auto-transformation} is a cluster transformation which acts trivially on $\s{A}$ and $\s{X}$.
	
\begin{dfn}[\cite{FG1}]The {\it cluster modular group} $\Gamma$ is the group of {\it cluster} auto-transformations of a base seed $S$ modulo trivial {\it cluster} auto-transformations.
\end{dfn}

We also define an {\it extended cluster modular group} $\wh{\Gamma}$ by allowing seed isomorphisms to reverse the sign of the skew-symmetric form on $N$. For example, for a toric variety with cocharacter lattice $N$, $\Gamma$ can be thought of as the subgroup of $\SL(N)$ which preserves the fan (consisting of rays corresponding to frozen vectors), whereas $\wh{\Gamma}$ can be thought of as the subgroup of $\GL(N)$ preserving the fan. We will analyze the action of $\Gamma$ on $U^{\trop}$ in Section~\ref{CMG}, and we will briefly point out a couple interesting symmetries coming from $\wh{\Gamma}\setminus \Gamma$ (Remark \ref{ExtendedModular}).

\subsection{The cluster complex}\label{ClusterComplex}
A seed $S$ with seed vectors $e_1,\dots,e_n$ determines a cone $C_S \subset \s{X}_S^{\trop}=(\s{X}_{S^{\vee}})^{\trop}:=M_{\bb{R},S}$ given by $e_i \geq 0$ for all $i\in I\setminus F$. The collection of all such cones in $\big(\s{X}^{\vee}\big)^{\trop}$ for every seed mutation equivalent to $S$ forms a simplicial fan called the {\it cluster complex}, denoted by $\s{C}$, cf.\ \cite[Theorem~0.8]{GHKK}. The generators of the rays of this fan are called $g$-vectors. The cones of $\s{C}$ form a~particularly nice piece of the scattering diagram which~\cite{GHKK} uses for constructing canonical theta functions on the mirror $\s{A}^{\vee}$ to $\s{X}$.

Note that the action of $\Gamma$ on $\s{X}^{\vee}$ induces an action on $\big(\s{X}^{\vee}\big)^{\trop}(\bb{Z})$, and this induces an action on the cluster complex. Here, because it is tricky to make sense of what it means for an action on $\big(\s{X}^{\vee}\big)^{\trop}$ to be linear, we view the cluster complex as a collection of tuples of $g$-vectors rather than a collection of linear spaces they span. As we mentioned at the start of Section~\ref{ClusterTrop}, \cite{GHK3} shows that the tropicalization of mutation indeed agrees with the formula for Langlands dual seed mutation, so the action of $h\in \Gamma$ on $\s{X}^{\trop}(\bb{Z})$ is given by the corresponding seed auto-transformation. In particular, if $h$ is trivial, then any cluster auto-transformation representing it corresponds to a trivial seed auto-transformation. The following proposition shows the converse:

\begin{prop}\label{TropClust}
$\Gamma$ acts faithfully on $\big(\s{X}^{\vee}\big)^{\trop}(\bb{Z})$, and may be equivalently defined as the group of seed auto-transformations of a base seed $S$ modulo trivial seed auto-transformations.
\end{prop}
\begin{proof}If $h \in \Gamma$ acts trivially on $\big(\s{X}^{\vee}\big)^{\trop}(\bb{Z})$, then it acts trivially on $\s{C}$. By \cite[Theorem~0.8]{GHKK}, this means that $h$ acts trivially on the set of equivalence classes of seeds, and thus corresponds to a trivial cluster transformation. For the second statement, note that seed transformations and cluster transformations are in bijection by definition, so the only nontrivial part of this statement is that trivial seed auto-transformations correspond bijectively to trivial cluster auto-transformations. We saw one direction of this immediately before the proposition, and the first statement of the proposition is the reverse direction.
\end{proof}

We note that a similar argument shows that $\wh{\Gamma}$ can also be understood in terms of its action on $\big(\s{X}^{\vee}\big)^{\trop}$. One sees that $\wh{\Gamma}$ is the same as the group of cluster auto-transformations considered in \cite{ASS} (cf.\ their Lemma~2.3).

In Section~\ref{CMG} we will describe the action of the cluster modular group on $U^{\trop}$. In many (conjecturally all) cases, every integral linear automorphism of $U^{\trop}$ is induced by an element of the cluster modular group.

\section[$U^{\trop}$ as an integral linear manifold]{$\boldsymbol{U^{\trop}}$ as an integral linear manifold} \label{tropu}
Recall that $U$ denotes a log Calabi--Yau surface. This section examines $U^{\trop}$ with its canonical integral linear structure defined in \cite{GHK1}.

\subsection{Some generalities on integral linear structures} \label{ils}

A manifold $B$ is said to be {\it $($oriented$)$ integral linear} if it admits charts to $\bb{R}^n$ which have transition maps in $\SL_n(\bb{Z})$. We allow $B$ to have a set $O$ of singular points of codimension at least $2$, meaning that these integral linear charts only cover $B':=B\setminus O$. $B'$ has a canonical set of {\it integral points} which come from using the charts to pull back $\bb{Z}^n\subset \bb{R}^n$. Our space of interest, $B=U^{\trop}$, will be homeomorphic to $\bb{R}^2$ and will typically have a singular point at $0$ (which we say is also an integral point).

$B'$ admits a flat affine connection, defined using the charts to pull back the standard flat connection on $\bb{R}^n$. Furthermore, pulling back along these charts give a local system $\Lambda$ of integral tangent vectors on $B'$. We will be interested in the monodromy of $\Lambda$ around $O$.

\subsubsection{Integral linear functions}\label{int lin fun}

By a {\it linear map} $\varphi\colon B_1\rar B_2$ of integral linear manifolds, we mean a continuous map such that for each pair of integral linear charts $\psi_i\colon U_i\rar \bb{R}^n$, $U_i\subset B_i'$ with $\varphi(U_1)\subset U_2$, we have that $\psi_2\circ \varphi\circ \psi_1^{-1}$ is linear in the usual sense. $\varphi$ is {\it integral linear} if it also takes integral points to integral points. By an {\it integral linear function}, we will mean an integral linear map to $\bb{R}$ with its tautological integral linear structure.

We note that to specify an integral linear structure on an integral piecewise linear manifold (i.e., a manifold where transition functions are integral piecewise linear), it suffices to identify which piecewise linear functions are actually linear. These functions can then be used to construct charts. It therefore also suffices (in dimension $2$) to specify which piecewise-straight lines are straight, since (piecewise-)straight lines form the fibers of (piecewise-)linear functions.

\subsection[Constructing $U^{\trop}$]{Constructing $\boldsymbol{U^{\trop}}$} \label{Ut}

\begin{ntn}\label{UtropNotation}\looseness=-1 Given a toric model $(Y,D) \rar (\?{Y},\?{D})$, let $N$ be the cocharacter lattice corresponding to $(\?{Y},\?{D})$ (contrary to Section~\ref{cluster}'s notation), and let $\Sigma\subset N_{\bb{R}}$ be the corresponding fan. $\Sigma$ has cyclically ordered rays $\rho_i$, $i=1,\dots,n$, with primitive generators $v_i$, corresponding to boundary divisors $\?{D_i}\subset \?{D}$ and $D_i \subset D$. Assume $N_{\bb{R}}$ is oriented so that $\rho_{i+1}$ is counterclockwise of $\rho_i$. Let $\sigma_{u,v}$ denote the closed cone bounded by two vectors $u,v$, with $u$ being the clockwise-most boundary ray. In particular, if $u$ and $v$ lie on the same ray, we define $\sigma_{u,v}$ to be just that ray. We may use variations of this notation, such as $\sigma_{i,i+1}:=\sigma_{v_i,v_{i+1}}$ and $v_{\rho}$ for the primitive generator of some arbitrary ray $\rho$ with rational slope, but these variations should be clear from context.
\end{ntn}

We now use $(Y,D)$ to define an integral linear manifold $U^{\trop}$. As an integral piecewise-linear manifold, $U^{\trop}$ is the same as $N_{\bb{R}}$, with $0$ being a singular point and $U^{\trop}(\bb{Z}):=N$ being the integral points.
 Note that an integral $\Sigma$-piecewise linear (i.e., bending only on rays of $\Sigma$) function $\varphi$ on $U^{\trop}$ can be identified with a Weil divisor of $Y$ via $W_{\varphi}:=a_1D_1+\dots + a_n D_n$, where $a_i = \varphi(v_i)\in \bb{Z}$. We define the integer linear structure of $U^{\trop}$ by saying that a~func\-tion~$\varphi$ on the interior of $\sigma_{i-1,i}\cup\sigma_{i,i+1}$\footnote{We assume here that there are more than $3$ rays in $\Sigma$, so that $\sigma_{i-1,i}\cup \sigma_{i,i+1}$ is not all of $N_{\bb{R}}$. This assumption can always be achieved by taking toric blowups of $(Y,D)$. Alternatively, it is easy to avoid this assumption, but the notation and exposition becomes more complicated. We will therefore continue to implicitly assume that there are enough rays for whatever we are trying to do.} is linear if it is $\Sigma$-piecewise linear and $W_\varphi \cdot D_{i} = 0$. This last condition is (for $n\geq 2$) equivalent to
\begin{gather*}
	a_{i-1} +D_{i}^2 a_{i} + a_{i+1} = 0.
\end{gather*}
Equivalently (as in \cite{GHK1}), if $\varphi\colon \sigma_{i-1,i}\cup \sigma_{i,i+1}\rar \bb{R}^2$ is a chart, then
\begin{gather} \label{linear}
	\varphi(v_{i-1})+D_{i}^2 \varphi(v_i) + \varphi(v_{i+1}) = 0.
\end{gather}
Note that the linear structure is determined by $(Y,D)$ and does not depend on the choice of toric model. In fact, while the construction generalized to higher-dimensions, a special feature of the two-dimensional situation is that toric blowups and blowdowns do not affect the integral linear structure, so as the notation suggests, $U^{\trop}$ and $U^{\trop}(\bb{Z})$ depend only on the interior~$U$.

\begin{eg}
If $(Y,D)$ is toric, then $U^{\trop}$ is just $N_{\bb{R}}$ with its usual integral linear structure. This follows from the standard fact from toric geometry that
$
	\sum_i (C \cdot D_i) v_i = 0
$
for any curve class $C$. Taking non-toric blowups changes the intersection numbers, resulting in a singularity at the origin.
\end{eg}

\begin{rmk}\label{divisorial} Recall from standard toric geometry that any primitive vector $v\in N$ corresponds to a prime divisor $D_v$ supported on the boundary of some toric blowup of $\big(\?{Y},\?{D}\big)$, and a general vector $kv$ with $k\in \bb{Z}_{\geq 0}$ and $v$ primitive corresponds to the divisor $kD_{v}$. Two divisors on different toric blowups are identified if there is some common toric blowup on which their proper transforms are the same (equivalently, if they correspond to the same valuation on the function field). Since taking proper transforms under the toric model gives a bijection between boundary components of $(Y,D)$ and boundary components of $\big(\?{Y},\?{D}\big)$ (and similarly for the boundary components of toric blowups), we see that points of $U^{\trop}(\bb{Z})$ correspond to multiples of divisors on compactifications of $U$.
\end{rmk}

\subsection[Another construction of $U^{\trop}$]{Another construction of $\boldsymbol{U^{\trop}}$}\label{SeedScatter}
We now give another construction of the canonical integral linear structure, this time more closely related to the cluster picture. Given a seed $S$, consider the non-frozen seed vectors $\{e_i\}_{i\in I\setminus F}$. Recall that $v_i:=p_2^*(e_i) \in U^{\trop}:=p_2^*(\s{A}^{\trop})\subset \s{X}^{\trop}$ (cf.\ Section~\ref{ClusterTrop}). The integral linear structure on $U^{\trop}$ agrees with that of the vector space $U^{\trop}_S$ (with the lattice $\?{N_{2,S}}$ as the integral points) on the complement of the rays $\rho_i:=\bb{R}_{\geq 0} v_i$, $i\in I\setminus F$. By repeatedly mutating, this determines the integral linear structure everywhere.

For yet another perspective, consider a line $L$ in $U^{\trop}_S$ which crosses a ray $\rho_i$ as above. Viewed as a piecewise-straight line in $U^{\trop}$ with its canonical integral linear structure, $L$ will bend away from the origin when it crosses $\rho_i$. Lines $L$ which are straight in $U^{\trop}$ will bend towards the origin in $U^{\trop}_S$ as follows: if $u$ is a tangent vector to $L$ on one side of $\rho_i$ which points towards $\rho_i$, then on the other side, $u-|u\wedge v_i| v_i$ will be a tangent vector pointing away from $\rho_i$.
 Another way to state this perspective is that the ``broken lines'' (as in \cite{GHK1} and \cite{GHKK}) in $U^{\trop}$ which are actually straight with respect to the canonical integral linear structure are exactly those which bend towards the origin as much as possible.

\subsection{The developing map}\label{dev}

We now describe a tool from \cite{GHK1} that is useful for doing explicit computations on $U^{\trop}$. Consider the universal cover $\xi\colon \wt{U}_0^{\trop} \rar U^{\trop}_0:=U^{\trop}\setminus \{0\}$. Note that $\wt{U}_0^{\trop}$ has a canonical integral linear structure pulled back from~$U^{\trop}_0$. The integral points are $\wt{U}_0^{\trop}(\bb{Z}) := \xi^{-1}\big[U^{\trop}_0(\bb{Z})\big]$. Furthermore, a ray $\rho\in U^{\trop}_0$ pulls back to a family of rays $\rho^j$, $j\in \bb{Z}=\pi_1\big(U^{\trop}_0\big)$, projecting to~$\rho$ (we arbitrarily choose a ray in $\wt{U}_0^{\trop}$ to be $\rho_0$ and then assign the other indices so that they increase as we go counterclockwise).

Suppose that $v \in \rho_0$ and $v' \in \rho'_0$ are primitive vectors in $\wt{U}_0^{\trop}$ spanning the integral points of~$\sigma_{v,v'}$.
 Then there is a unique linear map $\delta_{\rho,\rho'}\colon \wt{U}_0^{\trop} \rar \bb{R}^2\setminus\{0\}$ such that $\delta_{\rho,\rho'}(v)=(1,0)$ and $\delta_{\rho,\rho'}(v')=(0,1)$. We call this the {\it developing map} with respect to~$\rho$ and~$\rho'$.
 We will often leave off the subscripts if they are not relevant, or we will write~$\delta_{\rho}$ if only
the image $\rho$ of the first ray is relevant. $\delta$ is an integral linear immersion, and $\delta\big(\wt{U}_0^{\trop}(\bb{Z})\big) \subseteq \bb{Z}^2\setminus \{(0,0)\}$. A superscript $j\in \bb{Z}$ on $\delta$ will indicate that we are considering the $j^{\tt{th}}$ sheet of $\delta$ (e.g., $\delta^j(\rho):=\delta(\rho^j)$ for $\rho\in U^{\trop}_0$).

\begin{eg} \label{cubicdevelop}
Consider the cubic surface (as in Example~\ref{cubic cluster}) constructed by taking two non-toric blowups on each of the three boundary divisors $D_1$, $D_2$, and $D_3$ of $\bb{P}^2$. The intersection matrix $H:=(D_i \cdot D_j)$ is $
H=	\left(\begin{smallmatrix}
		-1 &1 & 1 \\
		1 &-1 & 1 \\
		1 &1 & -1
		\end{smallmatrix}\right)
$
and equation~\eqref{linear} (or the construction from charts) implies that $\delta^0_{\rho_{D_1},\rho_{D_2}}(v_3) = (-1,1)$, and $\delta^j(v) = (-1)^j\delta^0(v)$. See Fig.~\ref{DevelopingMaps}(a).
\end{eg}

\begin{eg} \label{M05}Consider $\big(\?{\s{M}_{0,5}} , D = D_1 + \dots + D_5\big)$ constructed from the toric surface $\big(\bb{P}^2,\?{D}=\?{D_1}+\?{D_2}+\?{D_4}\big)$ by making toric blowups at $D_1 \cap D_4$ and $D_2 \cap D_4$, as well as one non-toric blowup on each of $\?{D_1}$ and $\?{D_2}$. We then have five boundary components, each with self-intersec\-tion~$-1$. A developing map takes the rays of the fan to $(1,0), (0,1), (-1,1), (-1,0)$, and $(0,-1)$, respectively, and then restarts with $(1,-1)$ and $(1,0)$. See Fig.~\ref{DevelopingMaps}(b).
\end{eg}

\begin{figure}\centering \begin{tabular}{c c}
\xy
{\ar (0,0); (20,0)}; (20,-3)*{\rho_1^0};
{\ar (0,0); (0,20)}; (3,20)*{\rho_2^0};
{\ar (0,0); (-20,20)}; (-23,19)*{\rho_3^0};
{\ar (0,0); (-20,0)}; (-20,3)*{\rho_1^1};
{\ar (0,0); (0,-20)}; (-3,-20)*{\rho_2^1};
{\ar (0,0); (20,-20)}; (23,-19)*{\rho_3^1};
\endxy
 \vspace{.05 in} \quad & \quad
\xy
{\ar (0,0); (20,0)}; (19,-3)*{\rho_1^0, \rho_2^1};
{\ar (0,0); (0,20)}; (6,20)*{\rho_2^0,\rho_3^1};
{\ar (0,0); (-20,20)}; (-24,17)*{\rho_3^0,\rho_4^1};
{\ar (0,0); (-20,0)}; (-20,3)*{\rho_4^0,\rho_5^1};
{\ar (0,0); (0,-20)}; (-3,-20)*{\rho_5^0};
{\ar (0,0); (20,-20)}; (23,-19)*{\rho_1^1};
\endxy
 \vspace{.05 in} \\
 (a) & (b) \end{tabular}
\caption{(a): Cubic surface developing map. We let $\rho_i^j$ denote $\delta^j_{\rho_{D_1},\rho_{D_2}}(\rho_{D_i})$. (b): $\?{\s{M}}_{0,5}$ developing map, with $\rho_i^j$ labelled for $j=0,1$. \label{DevelopingMaps}}
\label{wrap}
\end{figure}

\subsection{Monodromy about the origin}\label{mono}

We now consider what happens when we parallel transport a tangent vector $v$ in $T_p {U^{\trop}}$ counterclockwise around the origin. We use the embedding of a cone in the tangent spaces of its points (which are all identified via parallel transport in the cone), and we use the notation $\delta^i := \delta^i_{\rho_{D_1},\rho_{D_2}}$.

\begin{eg} \label{focusfocus}Suppose $Y\rar \?{Y}$ consists of a single non-toric blowup on, say, $D_1$. Then $\delta^0(v_1)=\delta^1(v_1)=(1,0)$. However, $\delta^0(v_2) = (0,1)$ while $\delta^1(v_2) = (1,1)$. We can view parallel transporting counterclockwise around the origin as parallel transporting up one sheet on the developing map, and then the monodromy tells us how to write the transported vector in terms of~$\delta^1(v_1)$ and~$\delta^1(v_2)$. Thus, the monodromy is
\begin{gather*}
	\mu=\left(\begin{matrix}
		1 &1 \\
		0 &1
		\end{matrix}\right)^{-1}
	=	\left(\begin{matrix}
		1 &-1 \\
		0 &1
		\end{matrix}\right).	
\end{gather*}
\end{eg}

More generally, recall that if $v=(a,b)$ in the basis $\{(1,0), (0,1)\}$, then in a basis~$\{u_1,u_2\}$, $v$~is given by $\left(\begin{smallmatrix} u_1 & u_2 \end{smallmatrix} \right)^{-1}\left(\begin{smallmatrix} a \\ b \end{smallmatrix} \right)$. Thus, the monodromy is in general given by
\begin{gather*}
 \mu=\left(\begin{matrix}\delta^1(v_{1}) & \delta^1(v_{2}) \end{matrix} \right)^{-1}
\end{gather*} with respect to the basis and developing map $\big\{\delta^0(v_{1})=(1,0),\delta^0(v_{2})=(0,1)\big\}$. Thus, for any \mbox{$x\in U^{\trop}_0$} and $k\in \bb{Z}$, we have $\mu^{-k}\big(\delta^0(x)\big)=\delta^k(x)$, and so we may view $\mu^{-k}$ as the deck transformation corresponding to $k\in \bb{Z}=\pi_1\big(U^{\trop}_0\big)$ (positive $k$ corresponding to counterclockwise paths), i.e., $\mu^{-k}$ acts on $\wt{U}_0^{\trop}$ by raising points up $k$ sheets. In particular, the monodromy determines $U^{\trop}$ as an integral linear manifold: $U^{\trop}$ is the quotient of $\wt{U}_0^{\trop}$ by the action of~$\mu^{-1}$.

The matrices $\mu$ and $\mu^{-1}$ can always be factored into a product of unipotent matrices as follows: choose a toric model in which $k_i$ non-toric blowups are taken on the divisor $D_{v_i}$, for $v_1,\dots,v_s\in N$ cyclically ordered clockwise. Then we have the factorization
\begin{align}
	\mu^{-1} = \mu_{v_s}^{-k_s} \cdots \mu_{v_1}^{-k_1},
\end{align}
where $\mu_{v_i}^{-k_i}$ is given in an oriented unimodular basis $(v_i,v_i')$ by the matrix $	\left(\begin{smallmatrix}
		1 &k_i \\
		0 &1
		\end{smallmatrix}\right)$.
 More generally, in a basis where $v_i = (a,b)$, the corresponding contribution to $\mu^{-1}$ is
\begin{gather}\label{genmut}
	\mu_{(a,b)}^{-k_i} :
		= \left(\begin{matrix}
		1- k_i a b &k_i a^2 \\
		-k_i b^2 &1+k_i a b
		\end{matrix}\right).
\end{gather}

Now $\mu$ can of course be expressed as $\mu_{v_1}^{k_1} \cdots \mu_{v_s}^{k_s}$. Alternatively (following from the fact that $A\mu_v A^{-1}=\mu_{Av}$), the monodromy matrix is given by the product $\mu=(\mu'_{v_s})^{k_s} \cdots (\mu'_{v_1})^{k_1}$ of matrices of the form
\begin{align}\label{hardmu}
	(\mu'_{v_i})^{k_i}:=\mu_{(a_i,b_i)}^{k_i}
		= \left(\begin{matrix}
		1+ k_i a_i b_i &-k_i a_i^2 \\
		k_i b_i^2 &1-k_i a_i b_i
		\end{matrix}\right),
\end{align}
where $(a_1,b_1):=v_1$, and for $i>1$, $(a_i,b_i):=(\mu'_{v_{i-1}})^{k_{i-1}}\cdots (\mu'_{v_1})^{k_1} v_i$. This can be interpreted by saying that before we can apply the monodromy contribution corresponding to $v_i$, we have to let the modifications we have made so far act on $v_i$.

\begin{rmk}\label{factor}We note that we can view these factorizations of $\mu$ as corresponding to factorizations of the singular point into several focus-focus singularities (i.e., singularities with unipotent monodromy) which are contained on their counterclockwise-ordered invariant rays. Each toric model of $U$ determines such a factorization, but in general, different factorizations may correspond to toric models of non-deformation-equivalent log Calabi--Yau surfaces. Theorem \ref{sing} shows that this does not happen when $\mu^{-1}$ is one of Kodaira's monodromies.
\end{rmk}

\begin{eg}In Example \ref{cubicdevelop}, we have $\delta^1(v_1) = (-1,0)$ and $\delta^1(v_2) = (0,-1)$, so we thus see that the monodromy for the cubic surface is $-\id$.
\end{eg}

\begin{eg} \label{M05 mono}Similarly, for Example \ref{M05} we have $\delta^1(v_1) = (1,-1)$ and $\delta^1(v_2) = (1,0)$, so the monodromy is
\begin{gather*}
	\mu= \left(\begin{matrix}
		1 &1 \\
		-1 &0
		\end{matrix}\right)^{-1} =
		\left(\begin{matrix}
		0 &-1 \\
		1 &1
		\end{matrix}\right)
\end{gather*}
with respect to the basis $\big\{ \delta^0(v_1) = (1,0) , \delta^0(v_2) = (0,1) \big\}$.
\end{eg}

We have that $U^{\trop}$ is uniquely determined (as an integral linear manifold, up to isomorphism)
 by its monodromy, and that a factorization of the monodromy into unipotent elements with cyclically ordered eigenrays as above corresponds to a toric model for a Looijenga pair (up to deformation), and hence to a seed as in Section~\ref{Looijenga seeds}. By ``eigenray,'' we mean an eigenline with a chosen direction.

\subsubsection{Mutations and monodromy}\label{mutmono}

We now describe the monodromy of $U^{\trop}$ directly in terms of seed data. Use $\mu_{i,S}$ to indicate that we are mutating a seed $S$ with respect to a vector $e_i$. We consider the induced map on $\?{N_2}$, identified with $N_{\?{Y}}$ as in Section~\ref{Looijenga seeds}, which we denote by $\?{\mu}_{i,S}$. This is not hard to describe~-- it is given by equation~\eqref{seedmut}, with each $e_i$ replaced by $v_i:=p_2^*(e_i)$, and $(\cdot,\cdot)$ replaced by the induced non-degenerate bilinear form $(\cdot\wedge\cdot)$ on $N_{\?{Y}}$.
 Assume that the $v_i$'s are positively ordered with respect to the orientation induced by this form.

Now we observe that, in the notation of equation~\eqref{genmut}, $\?{\mu}_{i,S}^2=\mu_{v_i}^{-d_i'}$. Thus, the inverse monodromy $\mu^{-1}$ of $U^{\trop}$ is $\mu^{-1}= \prod \?{\mu}_{i,S}^2$, where the product is taken over all $i$, with the $v_i$'s being ordered counterclockwise as we move from right to left in the product. Note that the $v_i$'s in this formula are not affected by the previous mutations!

Alternatively, by equation~\eqref{hardmu}, we have $\mu = \?{\mu}_{n,S^{n}}^{-2} \circ \?{\mu}_{n-1,S^{n-1}}^{-2} \circ \cdots \circ \?{\mu}_{1,S^{1}}^{-2}$, where $S^1:=S$, and $S^{k}:= \mu_{k-1,S^{n-1}}^{-2} \big(S^{k-1}\big)$. That is, we apply the inverse mutation twice with respect to one vector, then twice with respect to the next vector in the new seed, and so on.

This straightforward way to compute the monodromy is potentially useful because in Section~\ref{classification} we classify cluster varieties in terms of their monodromies (among other things).

\subsection[Lines in $U^{\trop}$]{Lines in $\boldsymbol{U^{\trop}}$} \label{lines}

For us, a {\it line} $L$ in $U^{\trop}$ will simply mean the image of a linear map $L\colon \bb{R}\rar U^{\trop}_0$ (we abuse notation by letting $L$ denote the map and its image). A line together with such a choice of linear map will be called a {\it parametrized line}.

The {\it signed lattice distance} of a parametrized line $L$ from the origin is given by the skew-form $L(t)\wedge L'(t)$, where we use the canonical identification of the vector from $0$ to $L(t)$ with a vector in $T_{L(t)}$. Note that the lattice distance does not depend on $t$. We will write $L^{>0}$ to denote that a~line $L$ has positive lattice distance from the origin (i.e., goes counterclockwise about the origin), or~$L^{<0}$ to denote that it has negative lattice distance from the origin.

We will say that a parametrized line $L$ {\it goes to infinity parallel to} $q$ if, for any open cone $\sigma \ni q$, there is some $t_\sigma\in \bb{R}$ such that $t>t_{\sigma}$ implies $L(t) \in \sigma$, $L'(t)=q$ under parallel transport in~$\sigma$. Similarly for {\it coming from infinity parallel to} $q$, with $t>t_{\sigma}$ replaced by $t<t_{\sigma}$ and $L'(t)=q$ replaced with $-L'(t)=q$.

We let $L(\infty)$ and $L(-\infty)$ denote the directions in which $L$ goes to and comes from infinity. We use the subscript $q$ to indicate that a line $L$ goes to infinity parallel to $q$. For example, $L_q^{>0}$~denotes a line which goes to infinity parallel to~$q$ with the origin on its left.

We say that an unparametrized line goes to infinity parallel to $q$ if it admits a parametrization which does. In general, a line need not go to infinity at all. In fact, one characterization of $U$ being positive is that every line both goes to and comes from infinity, cf.\ Section~\ref{PositiveCase}.

We note that the monodromy about the origin in $U^{\trop}$ allows lines to wrap around the origin and self-intersect. We say that a line $L$ {\it wraps} if it intersects every ray, except possibly one, at least once. It wraps $k$ times if it hits each ray at least $k$ times, except possibly for one ray which it may hit only $(k-1)$ times.

\begin{eg} If $(Y,D)$ is the cubic surface introduced in Example \ref{cubicdevelop}, then for any ray $\rho \subset {U^{\trop}}$, $U^{\trop}\setminus \rho$ is isomorphic as an integral linear manifold to an open half-plane. Both ends of any line will go to infinity in the same direction. If we now make a non-toric blowup on some $D_{\rho_q}$, then in the new integral linear manifold, any line will self-intersect unless both ends will go to infinity parallel to $q$.
\end{eg}

\subsection[Some integral linear automorphisms of $U^{\trop}$]{Some integral linear automorphisms of $\boldsymbol{U^{\trop}}$}\label{LinearAuts}

Assume that $U$ is positive, so lines go to infinity on both ends. Given a point $q$ in $U^{\trop}$, define
\begin{gather*}
	\nu_+(q):=L_q^{>0}(-\infty), \hspace{.25 in} \nu_-(q):=L_q^{<0}(-\infty).
\end{gather*}
Intuitively, both operations correspond to ``negating'' a vector in the integral linear manifold, but using different choices of charts. These clearly lift to maps $\wt{\nu}^+$ and $\wt{\nu}^-\colon \wt{U}_0^{\trop}\rar \wt{U}_0^{\trop}$, which may be viewed as rotation $180^{\circ}$ clockwise or counterclockwise, respectively.

\begin{lem}\label{nu}$\nu_+$ and $\nu_-$ are integral linear and inverse to each other.
\end{lem}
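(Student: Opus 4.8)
The plan is to lift the maps $\nu_\pm$ to the universal cover $\wt U^{\trop}_0$ of $U^{\trop}_0:=U^{\trop}\setminus\{0\}$ and recognize them, through the developing map $\delta$ of \S\ref{dev}, as conjugates of elements of $\SL_2(\bb Z)$. First, since the singular point $0$ (over which $\nu_\pm$ acts by $0\mapsto 0$) is irrelevant to integral linearity, and since $\xi:\wt U^{\trop}_0\rar U^{\trop}_0$ is a local isomorphism of integral linear manifolds, it is enough to show that the lifts $\wt\nu^+,\wt\nu^-:\wt U^{\trop}_0\rar\wt U^{\trop}_0$ are integral linear. Fix a developing map $\delta:\wt U^{\trop}_0\rar\bb R^2\setminus\{0\}$, an integral linear immersion.

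The heart of the argument is the identity $\delta\circ\wt\nu^+=(-\id)\circ\delta$. Given $\wt q\in\wt U^{\trop}_0$, let $\wt L\subset\wt U^{\trop}_0$ be the lift of the line $L^{>0}_q$ whose end going to infinity is parallel to $\wt q$. Since $U$ is positive, $L^{>0}_q$ both comes from and goes to infinity, so $\wt L$ is a properly embedded line with both ends escaping to infinity; and since $L^{>0}_q$ is a genuine straight line of positive (in particular nonzero) lattice distance from $0$, and $\delta$ is an integral linear immersion (hence carries straight lines to straight lines and rescales the skew form only by $\det\in\{\pm1\}$), $\delta(\wt L)$ is an honest affine line in $\bb R^2$ missing the origin. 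The two ends of such a line run to infinity in antipodal directions; the one matching the $\wt q$-end develops to $\delta(\wt q)$, so the other develops to $-\delta(\wt q)$. As the $(-\infty)$-end of $\wt L$ is by definition $\wt\nu^+(\wt q)$, this gives $\delta(\wt\nu^+(\wt q))=-\delta(\wt q)$; carrying this out for the lifts of every primitive integral point (and extending by homogeneity) yields $\delta\circ\wt\nu^+=(-\id)\circ\delta$ on all of $\wt U^{\trop}_0$. Running the same argument with $L^{<0}_q$ in place of $L^{>0}_q$ --- whose lifted $(-\infty)$-end sits on a shifted sheet --- gives $\delta\circ\wt\nu^-=A\circ\delta$ for a fixed matrix $A\in\SL_2(\bb Z)$ (in fact $A=-\mu^{\mp1}$, with $\mu$ the monodromy of \S\ref{mono}).

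With these identities the lemma is immediate. Both $-\id$ and $A$ lie in $\SL_2(\bb Z)$, hence act integral-linearly on $\bb R^2$ with its tautological structure; and since $\delta$ is an integral linear immersion, its local inverses are integral linear. Thus on each chart $\wt\nu^+=\delta^{-1}\circ(-\id)\circ\delta$ and $\wt\nu^-=\delta^{-1}\circ A\circ\delta$ are composites of integral linear isomorphisms, so $\wt\nu^\pm$ is integral linear; pushing down through $\xi$, so is $\nu_\pm$ on $U^{\trop}_0$, which is the assertion.

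I expect the only genuine work to lie in the middle paragraph: verifying that the lift $\wt L$ of $L^{>0}_q$ (or $L^{<0}_q$) is a single properly embedded line with \emph{both} ends running to infinity --- this is precisely where positivity of $U$ and the analysis of lines in \S\ref{lines} are needed --- and being careful with the bookkeeping of sheets so that ``$\delta$ of the $(-\infty)$-end of $\wt L$'' is literally $-\delta(\wt q)$ and not $-\delta(\wt q)$ moved across some unaccounted-for application of the monodromy (this is exactly the difference between $\wt\nu^+$ and $\wt\nu^-$). Everything downstream --- that $-\id$ and $-\mu^{\mp1}$ lie in $\SL_2(\bb Z)$, and that conjugating by $\delta$ preserves integral linearity --- is formal.
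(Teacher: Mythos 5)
Your proof is correct and is essentially the paper's argument: the paper observes that the lifts $\wt{\nu}^{\pm}$ to $\wt{U}_0^{\trop}$ may be viewed as $180^{\circ}$ rotations and concludes immediately that these are integral linear, which is exactly your identity $\delta\circ\wt{\nu}^{\pm}=(-\id)\circ\delta$ (up to a deck transformation) unwound through the developing map. You have simply filled in the bookkeeping that the paper leaves implicit.
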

\begin{proof}
This follows from $\wt{\nu}^{\pm}$ being integral linear and inverse to each other, which is clear since clockwise/counterclockwise $180^{\circ}$-rotations of $\bb{R}^2$ are integral linear and inverse to each other.
\end{proof}

We will see in Proposition \ref{nuAut} that $\nu_{\pm}$ are induced by $\Gamma$.

\subsection{Useful facts from \cite{Man1}}\label{Man}
The following is a restatement of a Lemmas 3.7 and Corollary 3.8 from \cite{Man1}:

\begin{lem}\label{model}
Let $L\subset U^{\trop}$ be a line which does not wrap. Let $u$ and $v$ be the directions in which $L$ goes to infinity. Let $\sigma_L\subset U^{\trop}$ be the closed cone which is bounded by $u$ and $v$ and which does not contain any points of $L$. Then some compactification of $U$ admits a toric model whose non-toric blowups are all along divisors corresponding to rays in $\sigma_L$. Furthermore, after choosing a suitable compactification of $U$, there is only one such toric model with blowups centered on divisors corresponding to rays in $\sigma_L \setminus \rho_u$ $($alternatively, $\sigma_L \setminus \rho_v)$.
\end{lem}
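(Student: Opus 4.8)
The argument runs through the developing map of \S\ref{dev} and the dictionary between monodromy factorizations and toric models from \S\ref{mono}. Since $L$ does not wrap, it lies in the open region $W:=U^{\trop}\setminus\sigma_L$, which contains no singular point and, because $L$ keeps it from winding around $0$, is simply connected; hence $W$ carries a single integral linear chart $\delta_W:W\to\bb{R}^2\setminus\{0\}$. I would first pin down this chart: $L\subset W$ is straight and goes to infinity with tangent $u$ at one end and $-v$ at the other, where $\rho_u=\bb{R}_{\geq0}u$ and $\rho_v=\bb{R}_{\geq0}v$ are the boundary rays of $\sigma_L$, so in the $W$-chart $L$ develops to an honest straight line and $\delta_W(\rho_u)=-\delta_W(\rho_v)$. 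Using once more that $L$ does not wrap, $\delta_W$ is injective, so its image is the open half-plane bounded by the line through $\delta_W(\rho_u)$ and containing $\delta_W(L)$. The point of all this is that parallel transport around $0$ is trivial inside the $W$-chart, so the entire monodromy $\mu$ of $U^{\trop}$ is accumulated while crossing $\sigma_L$; concretely, $\mu^{\pm1}$ admits a factorization into unipotent shears $\mu_w^{-k}$ whose eigenrays $w$ are cyclically ordered rays lying in $\sigma_L$.

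\textbf{Existence.} By Lemma \ref{ToricModel}, after a toric blowup $U$ has some toric model, equivalently a seed $S_0$ (Construction \ref{LooijengaConstr}), which yields a factorization $\mu^{-1}=\mu_{v_s}^{-k_s}\cdots\mu_{v_1}^{-k_1}$ as in \S\ref{mono}. Using mutations and the seed equivalences of Proposition \ref{CoprimeEquiv} --- each preserving $U$ up to irrelevant loci by Theorem \ref{Looijenga} and acting on $U^{\trop}$ by the explicit piecewise-linear maps $\overline{\mu}_{i,S}$ of \S\ref{mutmono} --- I would sweep every eigenray $v_i$ into $\sigma_L$, arriving at a seed $S$ all of whose non-toric blowup rays lie in $\sigma_L$; Construction \ref{LooijengaConstr} then turns $S$ into a toric model of $U$ of the required form. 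This is exactly where the non-wrapping hypothesis enters: the only obstruction to combing the eigenrays into $\sigma_L$ would be one forced to cross, the long way around, a ray not in $\sigma_L$, and $L$ not wrapping is precisely the statement that $\sigma_L$ is wide enough that this does not occur. (One can also build the model directly from the half-plane picture of $W$ together with the factorization of $\mu$ with eigenrays in $\sigma_L$, but then one still has to verify the resulting Looijenga pair is $U$ and not merely one with the same $U^{\trop}$, cf. Remark \ref{factor}.)

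\textbf{Uniqueness.} If $(Y,D)\to(\overline Y,\overline D)$ and $(Y',D')\to(\overline Y',\overline D')$ are two toric models of $U$ with all non-toric blowups on rays in $\sigma_L$, then over $W$ each fan restricts to a smooth unimodular fan with no non-toric blowups --- forced, since $W$ is a single flat chart identified with a half-plane --- so the two models differ only over $\sigma_L$ and are related by mutations whose eigenrays stay in $\sigma_L$. Such mutations move the non-toric blowup points along the $\overline D_i$ by the explicit formulas underlying the geometric picture of mutations in \S\ref{geom int}, and one checks these determine the blowup data on every ray of $\sigma_L$ except the terminal boundary ray $\rho_u$ (symmetrically, $\rho_v$). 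Equivalently, invoke \cite{GHK2}: every deformation of $U$ slides the non-toric blowup points along the $\overline D_i$ without ever reaching a node, and a nontrivial such slide of a point on a ray interior to $\sigma_L$, or on $\rho_v$, would make $L$ meet a ray it currently misses, i.e.\ force $L$ to wrap --- a contradiction. Hence the non-toric blowup data on $\sigma_L\setminus\rho_u$ is rigid.

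\textbf{Main obstacle.} The delicate part is the first paragraph: showing that a single non-wrapping straight line genuinely flattens all of $W$ --- that $\delta_W$ is injective with half-plane image, so that the monodromy is really concentrated in $\sigma_L$ --- and then carrying the bookkeeping of the monodromy-preserving moves far enough to be sure they terminate at a model of the given $U$ rather than at some other Looijenga pair sharing its tropicalization. After that, the rest is the explicit combinatorics of \S\ref{mono}--\S\ref{mutmono} and the deformation theory of \cite{GHK2}. (This is the content of Lemma 3.7 and Corollary 3.8 of \cite{Man1}.)
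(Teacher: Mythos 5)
First, a point of comparison: the paper does not prove this lemma at all --- it is explicitly introduced as ``a restatement of Lemmas 3.7 and Corollary 3.8 from \cite{Man1}'' --- so there is no in-paper argument to measure your proposal against. Judged on its own terms, your existence sketch has the right geometric skeleton (the complement $W=U^{\trop}\setminus\sigma_L$ is a single flat chart developing onto an open half-plane with $\delta_W(\rho_u)=-\delta_W(\rho_v)$, so all of the monodromy must be ``absorbed'' in $\sigma_L$), but the step that carries the entire weight --- that one can comb the eigenrays of an arbitrary starting toric model into $\sigma_L$ by mutations, and that this process terminates --- is precisely the assertion of the lemma, and you leave it as an acknowledged ``main obstacle'' rather than proving it. The heuristic that ``$L$ not wrapping is precisely the statement that $\sigma_L$ is wide enough'' is not an argument: a priori a mutation moves $v_i$ to its image under $\nu_\pm$, whose position in the canonical structure depends on all the other rays, and nothing you have written rules out the rays cycling forever without all landing in $\sigma_L$. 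So the existence half is an outline, not a proof.

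The uniqueness half contains a genuine error. You invoke \cite{GHK2} to say that a nontrivial slide of a blowup point along a divisor corresponding to a ray in $\sigma_L\setminus\rho_u$ ``would make $L$ meet a ray it currently misses, i.e.\ force $L$ to wrap.'' But deformations of $U$ obtained by sliding non-toric blowup points do not change $U^{\trop}$ at all (Remark \ref{charts}: the linear structure depends only on the intersection numbers $D_i^2$, which are deformation invariants), so they cannot change which rays the fixed line $L\subset U^{\trop}$ meets; the claimed contradiction is vacuous. The uniqueness assertion is a rigidity statement about the \emph{fixed} surface $Y$ --- essentially that the exceptional configurations of two such toric models, i.e.\ the $(-1)$-curves meeting boundary components indexed by rays of $\sigma_L\setminus\rho_u$, must coincide, with the only residual freedom being the exchange of a blowup on $\rho_u$ for one on $\rho_v$ by an elementary transformation across the flat region $W$ (consistent with $\delta_W(\rho_u)=-\delta_W(\rho_v)$). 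An argument along those lines, or a genuine citation to the proofs in \cite{Man1}, is what is needed; as written, both halves of the proposal have unfilled gaps at their decisive steps.
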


Suppose $U$ is positive, and let $(Y,D)$ be a compactification of $U$ with $D$ supporting an effective $D$-ample divisor (cf.\ Section~\ref{Looijenga seeds}). Let $\NE(Y)$ denote the cone of effective curve classes of $Y$. \cite{GHK1} constructs a flat family $\s{V}\rar \Spec \kk[\NE(Y)]$ mirror to $U$ which admits a canonical $\kk[\NE(Y)]$-module basis of theta functions $\{\vartheta_q\}_{q\in U^{\trop}(\bb{Z})}$. \cite{GHK2} then shows that $U$ can be realized as a fiber of $\s{V}$, thus giving theta functions on $U$. Recall from Section~\ref{ClusterDefs} that a global monomial is regular function on $\s{X}$ whose restriction to some seed $\s{X}$-torus is a monomial. We also call the restriction to a fiber $U\subset \s{X}$ of such a function a global monomial. Section~3.6 of \cite{Man1} observes the following (phrased differently):

\begin{lem}\label{GlobalMonomial}Take $\sigma_L$ as in Lemma~{\rm \ref{model}}. For any $q\in \sigma_L$, $\vartheta_q$ is a global monomial.
\end{lem}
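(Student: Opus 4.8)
The plan is to leverage Lemma \ref{model} to reduce to a purely toric computation. First I would fix a compactification $(Y,D)$ of $U$ and a toric model $\pi:(Y,D)\to(\?{Y},\?{D})$ as furnished by Lemma \ref{model}, so that all the non-toric blowups occur along boundary divisors $\?{D}_{v_i}$ with $v_i$ a ray in $\sigma_L$. The key point is that on the complement of $\sigma_L$ the integral linear structure of $U^{\trop}$ agrees with that of the toric fan $N_{\bb{R}}$ of $\?{Y}$ — no scattering/bending has happened there — and $\sigma_L$ is a strictly convex cone (since $L$ does not wrap, $u$ and $v$ are not antipodal and $\sigma_L$ is genuinely a cone, not a half-plane or more). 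So for $q\in\sigma_L$ I would produce an integral linear function $\varphi$ on a neighborhood of $\sigma_L$ that is linear on all of $\sigma_L$, i.e.\ exhibit $q$ as lying in a cone of the fan on which the $\s{X}^{\trop}$ chart coming from the toric model is still valid.

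The heart of the argument is then: such a $q$ corresponds, via Remark \ref{divisorial}, to a multiple of a toric boundary divisor $D_q$ on some toric blowup of $(\?Y,\?D)$, equivalently to a monomial $z^{q}$ on the ambient torus; and because all the non-toric blowup centers lie on $\?{D}_{v_i}$ for $v_i\in\sigma_L$, the divisor $D_q$ (or its proper transform) meets none of those exceptional loci, so the monomial is not modified by any of the mutations/scattering. More precisely, I would trace through Construction \ref{LooijengaConstr} and the description of $\s{X}^{\ft}$ in \S\ref{geom int}: building $\s{X}$ from the seed $S$ attached to $\pi$, the cone $\sigma_L$ (being disjoint from the rays $\rho_{v_i}$ where blowups are performed, except possibly along its two boundary rays which can be handled separately using the ``$\sigma_L\setminus\rho_u$'' refinement in Lemma \ref{model}) lies in a region where $\lambda$ restricted over it is just a toric $\bb{P}^1$-fibration picture, so the monomial $z^{q}$ extends as a regular function on $\s{X}$ whose restriction to the seed torus $\s{X}_S$ is a monomial — that is exactly the definition of a global monomial. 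Restricting to the fiber $U$ gives the global monomial on $U$, and by the compatibility of the theta function construction with toric monomials (the theta function attached to a point in a cone where no scattering has occurred is simply the corresponding monomial, cf.\ \cite{GHK1}), we get $\vartheta_q = z^{q}$, a global monomial.

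The step I expect to be the main obstacle is the careful bookkeeping of what happens on the two boundary rays $\rho_u,\rho_v$ of $\sigma_L$, and more generally making rigorous the claim ``no scattering occurs over $\sigma_L$.'' One has to be sure that the non-toric blowups performed on divisors $\?D_{v_i}$ with $v_i\in\sigma_L$ do not secretly contribute scattering walls that pass through the interior of $\sigma_L$; here the fact that $L$ does not wrap is essential, because it forces $\sigma_L$ to be a convex cone contained in an open half-plane, and the walls emanating from the $\rho_{v_i}$ propagate away from the origin into the half-plane \emph{not} containing $L$, i.e.\ they stay within $\sigma_L$ only along the rays $\rho_{v_i}$ themselves and their ``shadows,'' which do not disconnect $\sigma_L$ in a way that affects straightness through $q$. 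I would make this precise either by citing directly the uniqueness/structure statements in Lemma \ref{model} (which already packages the relevant consistency), or by an explicit developing-map computation: choosing $\delta=\delta_{\rho_u}$, the map $\delta$ is an embedding on (a lift of) $\sigma_L$, identifying it with an honest convex cone in $\bb{R}^2\setminus\{0\}$ on which $z^{q}$ pulls back to a genuine Laurent monomial, and then invoke \S 3.6 of \cite{Man1} (which is what Lemma \ref{GlobalMonomial} is restating) for the identification with $\vartheta_q$.
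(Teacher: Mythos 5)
First, a point of comparison: the paper does not actually prove this lemma --- it is quoted from \S 3.6 of \cite{Man1} (``\S 3.6 of \cite{Man1} observes the following''), so there is no in-paper argument to measure your proposal against; it has to stand on its own.

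It does not, because your central claim --- that for $q\in\sigma_L$ the monomial $z^{q}$ on the initial seed torus ``is not modified by any of the mutations/scattering,'' so that $\vartheta_q=z^q$ there --- is false, and your picture of where the scattering lives is inverted. By Lemma \ref{model} the non-toric blowups sit on rays $\rho_{v_i}\subset\sigma_L$; the initial walls of the canonical scattering diagram are exactly these rays, and consistency produces further rays in the convex span of the $\rho_{v_i}$, i.e.\ \emph{inside} $\sigma_L$. It is the complement of $\sigma_L$ that is wall-free; $\sigma_L$ itself is subdivided by the cluster fan into (in general infinitely many) chambers --- this is precisely the content of the Proposition identifying $\s{C}_U$ with $\s{C}\cap U^{\trop}$, and of \S \ref{Auto-no-wrap}, where $\sigma_L\setminus\rho$ is a fundamental domain for $\langle\nu_\pm\rangle$ acting on the chamber decomposition. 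Concretely, $(\mu_j^{\s{X}})^*z^{v}=z^{v}(1+z^{e_j})^{-(v,e_j)}$ remains a monomial only when $(v,e_j)=0$, so $z^{q}$ on the initial seed torus is a global monomial only for $q$ in the single cluster chamber $C_S$, not for all of $\sigma_L$. In the $A_2$ case, for $q$ on the ray generated by consistency between $\rho_{v_1}$ and $\rho_{v_2}$, the function $\vartheta_q$ is the fifth cluster variable, a genuinely non-monomial Laurent polynomial in the initial chart. (There is also a smaller error: $\sigma_L$ need not be strictly convex --- in the toric case it is a half-plane --- though nothing essential hinges on this.)

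The missing idea is therefore not ``no scattering over $\sigma_L$'' but rather: show that $\sigma_L$ (minus one boundary ray) is exhausted by the cluster chambers $C_{S'}$ of the seeds obtained from $S$ by successive mutation --- this is where the non-wrapping of $L$ is actually used, via the argument for (2)$\Leftrightarrow$(4) in Theorem \ref{not-all-wrap} --- and then invoke the fact from \cite{GHKK} that for $q$ in the closure of $C_{S'}$ the theta function $\vartheta_q$ restricts to the monomial $z^{q}$ on the seed torus of $S'$, hence is a global monomial. Your closing fallback, ``invoke \S 3.6 of \cite{Man1} for the identification with $\vartheta_q$,'' is circular, since that is exactly the statement being proved.
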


Assume $U$ is positive, and let $V$ denote a generic fiber of the mirror $\s{V}$. Given a~line \mbox{$L\subset V^{\trop}$}, let $Z(L)$ denote the connected component of $V^{\trop}\setminus L$ which contains the origin. For $q\in U^{\trop}(\bb{Z})$, $v\in V^{\trop}(\bb{Z})$, we can define $\vartheta_q^{\trop}(v):=\val_{D_v}(\vartheta_q)$, where $D_v$ is the boundary divisor corresponding to $v$ in some compactification of~$V$. \cite{Man1} extends $\vartheta_q^{\trop}$ to all of $V^{\trop}$ and describes its fibers explicitly. In particular Corollary~4.11 of~\cite{Man1} implies:
\begin{lem}\label{NegativeFibers}
For each $d<0$ and $q\in U^{\trop}(\bb{Z})$, the set $\big\{d<\vartheta_q^{\trop}<0\big\}\subset V^{\trop}$ is equal to~$Z(L)$ for some line $L$. Thus, if every line wraps, then every $\vartheta_q^{\trop}$ is non-positive everywhere, and in fact, $f^{\trop}$ is non-positive everywhere for every regular function on~$V$.
\end{lem}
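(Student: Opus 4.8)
\emph{Proof proposal.} The first sentence is an immediate consequence of \cite{Man1}, Corollary 4.11, which computes the level sets of $\vartheta_q^{\trop}$ on $V^{\trop}$ explicitly and in particular exhibits each $\{\vartheta_q^{\trop}=d\}$ with $d<0$ as one of the sets $Z(L)$ attached to an honest line $L$ in $V^{\trop}$. So the plan is to take that corollary as input and extract the two remaining assertions.

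For the claim that $\vartheta_q^{\trop}$ is non-positive everywhere once every line wraps, I would argue by contraposition: a point $v\in V^{\trop}$ with $\vartheta_q^{\trop}(v)=d>0$ should force the existence of a line that does not wrap. The mechanism is the positive-value half of the same fiber description in \cite{Man1}, Corollary 4.11, combined with Lemma \ref{GlobalMonomial} together with its converse: $\vartheta_q^{\trop}$ can attain a positive value only when $q$ lies in a cone $\sigma_L$ of the type produced by Lemma \ref{model} (equivalently, only when $\vartheta_q$ restricts to a global monomial on some seed torus), and the presence of such a cone $\sigma_L$ is precisely the presence of a line $L$ that does not wrap. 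Since by hypothesis every line wraps, no such $\sigma_L$ exists, so $\vartheta_q^{\trop}\le 0$ everywhere; the only nonempty level sets at a nonzero value are then the negative ones, which are of the form $Z(L)$ by the first sentence.

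For the last clause, let $f$ be regular on $V$. The theta functions $\{\vartheta_q\}_{q\in U^{\trop}(\bb{Z})}$ form a $B$-module basis of the global functions on $\s{V}$ \cite{GHK1}, so their restrictions span $\Gamma(V,\s{O}_V)$ over $\kk$ (for $U$ positive, $V$ is affine and these are all the regular functions); write $f=\sum_q c_q\vartheta_q$ with $c_q\in\kk$ of finite support. Fix $v\in V^{\trop}(\bb{Z})$ and the associated divisorial valuation. The theta basis is adapted to that valuation, in the sense that the lowest-order parts along $D_v$ of distinct theta functions are linearly independent over $\kk$ \cite{Man1}, so no cancellation occurs and $f^{\trop}(v)=\min_{q:\,c_q\neq 0}\vartheta_q^{\trop}(v)\le 0$ by the previous paragraph. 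Since $f^{\trop}$ is piecewise-linear and homogeneous of degree one on $V^{\trop}$, non-positivity on $V^{\trop}(\bb{Z})$ propagates to all of $V^{\trop}$.

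The step I expect to be the main obstacle is the second paragraph: one must extract from \cite{Man1}, Corollary 4.11 the behaviour of $\vartheta_q^{\trop}$ on its positive-value locus (equivalently, prove the converse of Lemma \ref{GlobalMonomial}) in order to turn ``$\vartheta_q^{\trop}>0$ somewhere'' into ``some line does not wrap.'' The no-cancellation input for the general-$f$ statement is routine by comparison once the $\vartheta_q^{\trop}$ formalism of \cite{Man1} is in hand.
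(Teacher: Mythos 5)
Your first and third paragraphs coincide with what the paper actually does. The first sentence of the lemma is simply imported from Corollary 4.11 of \cite{Man1} (the lemma is introduced with ``In particular Corollary 4.11 of \cite{Man1} implies''), and the paper's entire written proof is your third paragraph: expand $f$ in the theta basis and use that the valuation of a linear combination is the termwise minimum, with Remark 4.4 of \cite{Man1} ruling out cancellation. So on those two points you are on the paper's route.

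The soft spot is your second paragraph, and you are right to flag it as the main obstacle. The implication ``$\vartheta_q^{\trop}>0$ somewhere $\Rightarrow$ some line does not wrap,'' obtained via ``the converse of Lemma \ref{GlobalMonomial},'' is not available to you here without circularity: in the paper that implication is exactly (5)$\Rightarrow$(1) of Theorem \ref{not-all-wrap}, and it is proved \emph{using} the present lemma. Fortunately no such input is needed, which is why the paper dispatches the middle claim with a bare ``Thus.'' The direct argument: for each $d<0$ the level set $\{\vartheta_q^{\trop}=d\}$ is the trace $Z(L)$ of an honest line $L$; by hypothesis $L$ wraps, so it meets every ray of $V^{\trop}$ with at most one exception; hence $\vartheta_q^{\trop}$ takes the negative value $d$ somewhere on each such ray, and degree-one homogeneity of $\vartheta_q^{\trop}$ (which you already use at the end of your third paragraph) forces $\vartheta_q^{\trop}<0$ on the whole of each such open ray. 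Continuity and the density of these rays then give $\vartheta_q^{\trop}\le 0$ everywhere. Replacing your second paragraph with this makes the proof both complete and faithful to the paper's logic.
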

\begin{proof}
The last statement uses that every regular function is a linear combination of theta functions, and valuations of linear combinations of theta functions are given by taking the minima of the valuations of each term (Remark~4.4 and the preceding paragraph of~\cite{Man1} explain why no cancellations occur).
\end{proof}

\subsection{The tropicalization determines the charge}\label{trop-charge}

One natural question to ask is to what extent $U^{\trop}$ determines $U$. We will see in the next section that in many cases, $U$ is uniquely determined up to deformation by $U^{\trop}$. This is not always the case though: for example, there are two degree $8$ Del Pezzo's with an irreducible choice of anti-canonical divisor which have the same $U^{\trop}$ but are not deformation equivalent. This subsection shows that $U^{\trop}$ does at least determine the number of non-toric blowups.

\begin{dfn}The {\it charge}\footnote{More generally, the charge of a log Calabi--Yau variety $(Y,D=D_1+\dots+D_n)$ is given by $c(Y,D):=\dim(Y)+\rank(\Pic(Y))-n$.} of a Looijenga pair $(Y,D)$ is the number of non-toric blowups in a toric model for some toric blowup of $(Y,D)$.
\end{dfn}

\begin{lem} \label{ch}A Looijenga pair $(Y,D=D_1+\dots+D_n)$ with $n>1$ and intersection matrix $H:= (D_i\cdot D_j )$ has charge
\begin{gather}\label{charge}
	c(Y,D)=12-3n-\Tr(H).
\end{gather}
\end{lem}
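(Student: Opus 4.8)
The plan is to compute the charge by reducing to the toric model and tracking how the intersection matrix $H$ changes under the two operations that relate $(Y,D)$ to a toric Looijenga pair: toric blowups (which do not change the charge) and non-toric blowups (which increase the charge by $1$ each). First I would recall from Lemma \ref{ToricModel} that after possibly taking a toric blowup $(\wt Y,\wt D)\to(Y,D)$ — which changes neither the charge nor, as I will check, the quantity $12-3n-\Tr(H)$ — we may assume $(Y,D)$ admits a toric model $\pi:(Y,D)\to(\?Y,\?D)$ consisting of exactly $c:=c(Y,D)$ non-toric blowups. So it suffices to prove the formula for the toric pair $(\?Y,\?D)$ (the base case) and then show that each side of \eqref{charge} changes by the same amount under a single toric blowup and under a single non-toric blowup.

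For the base case, $(\?Y,\?D)$ is a smooth projective toric surface with its toric boundary a cycle of $\?n$ rational curves, and $c(\?Y,\?D)=0$. The classical toric identity $\sum_i(\?D_i)^2 = 3\?n-12$ (equivalently $\deg c_1(T_{\?Y})^2 = 12-\#\{\text{rays}\}$ read off the resolution of the cone; or: $K^2 = 12-\chi$ combined with $K=-\?D$ and adjunction) gives $\Tr(\?H)=\sum_i(\?D_i)^2 = 3\?n - 12$, so $12-3\?n-\Tr(\?H)=0=c(\?Y,\?D)$, as desired. I would cite the toric identity rather than reprove it.

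Next, the inductive steps. For a single \emph{toric blowup} at a node $D_i\cap D_{i+1}$: the new exceptional curve $E$ has $E^2=-1$, and the two curves it meets each have self-intersection dropped by $1$, while $n$ increases by $1$. Hence $\Tr(H)$ changes by $(-1)+(-1)+(-1) = -3$ and $n$ by $+1$, so $12-3n-\Tr(H)$ is unchanged — matching the fact that toric blowups preserve the charge. For a single \emph{non-toric blowup} at a smooth non-nodal point of some $D_i$: here $\wt D$ is the \emph{proper} transform of $D$, so no new boundary component appears, $n$ is unchanged, and exactly one self-intersection drops by $1$ (the one component through the blown-up point), so $\Tr(H)$ decreases by $1$ and $12-3n-\Tr(H)$ increases by $1$ — matching the increase of the charge by $1$. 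Combining the base case with these two steps along the sequence of blowups exhibiting the toric model yields \eqref{charge}.

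The only real subtlety — and the step I would be most careful about — is the bookkeeping for why the formula is well-defined, i.e.\ why starting from any toric blowup of $(Y,D)$ and any toric model gives the same count $c$: this is exactly the content of the charge being well-defined, which is built into Definition~\ref{charge}'s assertion (and follows because toric blowups/blowdowns don't change $c$ and a non-toric blowup is intrinsically detectable, e.g.\ via the footnote formula $c=\dim Y+\rank\Pic(Y)-n$, which is manifestly invariant). In fact, invoking that alternative description of the charge gives an even shorter proof: $\rank\Pic(Y)=10-K_Y^2$ for a rational surface, $K_Y=-D$, and $\Tr(H)=\sum D_i^2 = D^2 - \sum_{i\ne j}D_i\cdot D_j = K_Y^2 - 2n$ (each of the $n$ nodes of the cycle contributing $1$ to each of two adjacent intersection numbers), so $c = 2 + (10-K_Y^2) - n = 12 - n - K_Y^2 = 12 - n - (\Tr(H)+2n) = 12 - 3n - \Tr(H)$. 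I would present the inductive argument as the main proof and perhaps remark on this second computation; the footnote's $c(Y,D)=\dim(Y)+\rank(\Pic(Y))-n$ makes the latter essentially immediate, so the genuine work is just the toric identity $K_{\?Y}^2 = 12-\?n$ plus adjacency bookkeeping for the cycle.
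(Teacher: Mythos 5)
Your main argument is the same as the paper's: both sides of \eqref{charge} are invariant under toric blowups (which change $(n,\Tr H)$ by $(+1,-3)$) and change by the same amount under non-toric blowups (which change $(n,\Tr H)$ by $(0,-1)$ while raising the charge by $1$), so one reduces to a toric base case. The only difference in the main line is that the paper descends all the way to $(\bb{P}^2,D_1+D_2+D_3)$, where $n=3$, $\Tr(H)=3$, $c=0$ can be checked by hand, whereas you stop at a general toric pair and invoke the toric identity — which you have stated with the wrong sign: it should read $\sum_i \?D_i^2 = 12-3\?n$ (e.g.\ $\bb{P}^2$ gives $3$, not $-3$), consistent with your own parenthetical $K_{\?Y}^2=12-\?n$ together with $K_{\?Y}^2=\Tr(\?H)+2\?n$; your conclusion $12-3\?n-\Tr(\?H)=0$ uses the correct version, so this is only a transcription slip. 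Your second computation, via $c=\dim Y+\rank\Pic(Y)-n$, $\rank\Pic(Y)=10-K_Y^2$, and $K_Y^2=\Tr(H)+2n$, is a genuinely different and shorter route; it amounts to deriving the lemma from the formula $c=12-(n+K_Y^2)$ of \cite{GHK2}, which the paper only mentions as a remark after the proof. That route buys brevity and makes well-definedness of the charge manifest, at the cost of importing $\rank\Pic=10-K^2$ for rational surfaces; the inductive route is self-contained modulo the blowup bookkeeping.
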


\begin{proof}First note that, for $n>1$, toric blowups increase $n$ by $1$, decrease $\Tr(H)$ by $3$, and keep the charge constant, so equation~\eqref{charge} is unaffected by toric blowups and blowdowns. Similarly, non-toric blowups decrease $\Tr(H)$ by $1$ and increase the charge by $1$, so the validity of the equation is also unaffected by non-toric blowups. Since every Looijenga pair is related to a copy of the toric pair $\big(\bb{P}^2,D\big)$ by some sequence of toric blowups, toric blowdowns, and non-toric blowups, it now suffices to just check this case. We have $c\big(\bb{P}^2,D\big)=0$, $n=3$ and $\Tr(H)=3$, so the equation holds.
\end{proof}

A similar formula appears in \cite{GHK2}: $c(Y,D)=12-\big(n+K_Y^2\big)$.

\begin{prop}\label{TropCharge}Suppose that $(Y,D)$ and $(Y',D')$ are two Looijenga pairs with the same tropicalization $U^{\trop}$. Then $c(Y,D)=c(Y',D')$.
\end{prop}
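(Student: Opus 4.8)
The plan is to compute the charge purely from data that is intrinsic to the integral linear manifold $U^{\trop}$, so that two Looijenga pairs sharing a tropicalization automatically share a charge. The key observation is Lemma \ref{ch}: for $n>1$, the charge equals $12-3n-\Tr(H)$, where $n$ is the number of boundary components and $H=(D_i\cdot D_j)$ is the intersection matrix. So it suffices to show that the quantity $3n+\Tr(H)$ depends only on $U^{\trop}$. Since charge is invariant under toric blowups (by the argument already given in the proof of Lemma \ref{ch}), and since $U^{\trop}$ together with its integral linear structure is likewise unchanged by toric blowups (Remark \ref{charts}), I may freely pass to toric blowups of both $(Y,D)$ and $(Y',D')$; in particular I can assume both have $n>1$ and, more usefully, I can try to arrange that they have a common toric blowup structure visible tropically.

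The cleanest route is via the monodromy. By \S\ref{mono}, $U^{\trop}$ determines its monodromy matrix $\mu$ around the origin up to conjugacy, and conversely $\mu$ (up to conjugacy) determines $U^{\trop}$ as an integral linear manifold. Moreover $\mu^{-1}$ factors as $\prod \mu_{v_i}^{-k_i}$ over a cyclically ordered set of rays, where $k_i$ is the number of non-toric blowups on $D_{v_i}$, and the total charge is $c=\sum_i k_i$. Now observe the trace computation: from Equation \ref{genmut}, each factor $\mu_{(a,b)}^{-k_i}$ has trace $2$, but the relevant invariant is not the trace of a single factor. Instead, I would extract the charge from $\mu$ directly: I would show that for any factorization of $\mu^{-1}$ into unipotent factors with cyclically ordered eigenrays, the sum of the exponents $\sum k_i$ is the same — indeed equal to $c(Y,D)$ — because it is pinned down by $\mu$. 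A concrete way to see this is to relate $\sum k_i$ to a monodromy invariant: for a focus–focus type degeneration, $\sum k_i$ equals the number of nodal fibers colliding, which in Kodaira's / Lefschetz-fibration language is determined by the conjugacy class of $\mu$ when $\mu$ is (quasi-)unipotent, and more generally can be read off from how $\mu$ acts — e.g. via the formula $c=12-3n-\Tr(H)$ once one notes that $3n+\Tr(H)$ is itself a tropical invariant (see below).

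Here is the most direct argument, which I would actually write: fix a toric model of (a toric blowup of) $(Y,D)$; its fan $\Sigma\subset N_{\bb R}$ has rays $\rho_1,\dots,\rho_n$ and the linear structure on $U^{\trop}$ away from $0$ is the standard one, with the chart transition across $\rho_i$ determined by $D_i^2$ via Equation \ref{linear}. Thus each self-intersection number $D_i^2$ is recovered from $U^{\trop}$ as the (integral-linear) ``bending'' of $U^{\trop}$ across the ray $\rho_i$ relative to the flat structure coming from a developing map; equivalently, $-D_i^2$ is the unique integer such that $v_{i-1}+D_i^2 v_i + v_{i+1}=0$ in a local chart. Hence both $n$ (the number of rays of $\Sigma$, which is the number of points where $U^{\trop}$ fails to be linear for this particular model, i.e. the number of ``walls'' of the chosen piecewise-linear refinement) and the multiset $\{D_i^2\}$, hence $\Tr(H)=\sum D_i^2$, are determined by $U^{\trop}$ together with the choice of toric model. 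The subtlety is that $(Y,D)$ and $(Y',D')$ may come with different toric models and hence different $n$ and $\Tr(H)$ — but $3n+\Tr(H)$ is exactly the combination that is invariant under toric blowup (adding a ray changes $n$ by $1$ and $\Tr(H)$ by $-3$), so after passing to a common toric blowup — which exists precisely because $U^{\trop}$ is the same — we get equal $n$ and equal $\Tr(H)$, hence equal charge by Lemma \ref{ch}.

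\emph{Main obstacle.} The one genuinely nontrivial point is producing a \emph{common} toric blowup of $(Y,D)$ and $(Y',D')$, or more precisely arguing that the invariant $3n+\Tr(H)$ computed from any toric model is the same for both. Two toric models of surfaces with the same $U^{\trop}$ need not have literally the same fan, so I cannot just ``read off'' $D_i^2$ canonically; I must either (i) show that any two toric models become a common refinement after further toric blowups — which should follow from Lemma \ref{ToricModel} applied carefully, using that toric blowups of $U^{\trop}$ correspond to subdividing at rational rays and that any two sets of rays in $U^{\trop}$ admit a common refinement — or (ii) reformulate $3n+\Tr(H)$ intrinsically, e.g. as $12-12=$ something forced, or via the monodromy: show $\sum k_i$ is independent of the chosen cyclically-ordered unipotent factorization of $\mu^{-1}$. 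Route (i) seems the lower-risk one and is what I would pursue; the only care needed is to check that when I refine a fan of $(Y,D)$ by a ray not already present, the corresponding modification of $U^{\trop}$ matches the modification forced by refining the fan of $(Y',D')$ at the same ray — which holds because the integral linear structure (hence the ``$D_i^2$ across each ray'') is the same manifold $U^{\trop}$ in both cases. So the bulk of the write-up is this refinement bookkeeping, after which Lemma \ref{ch} finishes it immediately.
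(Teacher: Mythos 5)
Your argument is correct and is essentially the paper's own proof: pass to a nonsingular common refinement of the two fans in $U^{\trop}$ (a toric blowup of both pairs), note that the resulting intersection matrices are read off from the shared integral linear structure, and conclude with Lemma \ref{ch}. The detours through monodromy factorizations are unnecessary, but the route you say you would actually write is the one the paper takes.
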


\begin{proof}Let $\Sigma_Y$ and $\Sigma_{Y'}$ be the corresponding fans in $U^{\trop}$. There exists some nonsingular common refinement $\Sigma$ which is the fan for a toric blowup of both $(Y,D)$ and $(Y',D')$. The intersection matrices for these two toric blowups are the same, since each can be determined from $\Sigma$, so the claim follows from Lemma~\ref{ch}.
\end{proof}

\section{Classification}\label{classification}

Here we give several equivalent classifications for the possible deformation classes of Looijenga pairs. These classifications are based on the intersection matrix $H$ of $D$, the intersection form~$Q$ on $D^{\perp}\cong K_2$ (or the restriction of~$Q$ to $D^{\perp}_{\Eff} \subset D^{\perp}$, cf.\ Section~\ref{canonical intersection}), the monodromy~$\mu$ of~$U^{\trop}$, the properties of lines in $U^{\trop}$, the global functions on $U$, the properties of the quiver for a corresponding cluster structure, and various other properties. This may be viewed as a classification of rank-$2$ cluster varieties up to the notion of fiberwise-equivalence given in Definition~\ref{EquivalentClusters}. The classification is not totally new~-- for example, the cases that we refer to as ``no lines wrap'' or ``some lines wrap'' are simply the finite-type or acyclic cases, respectively, in the cluster language. However, we do offer new characterizations of these cases.

Throughout this section, $D$ will be called {\it minimal} if it has no $(-1)$-components. We begin with classifying the negative (semi-)definite cases. Most of the statements for these cases appear in \cite{GHK1} (some only in arXiv v1) or in~\cite{GHK2}, or else follow easily, as we will show.

\subsection{The negative semi-definite cases}

\begin{thm}[the negative definite cases]\label{NegD}
The following are equivalent:
\begin{enumerate}\itemsep=0pt
\item[$1.$] The {intersection matrix} $H=(D_i \cdot D_j)$ is negative definite.
\item[$2.$] Any {developing map} $\delta$ as in Section~{\rm \ref{dev}} embeds the universal cover $\wt{U}_0^{\trop}$ of $U^{\trop}_0$ into a~strictly convex cone in~$\bb{R}^2$.
\item[$3.$] The {monodromy} satisfies $\Tr(\mu) > 2$.
\item[$4.$] All {lines} in $U^{\trop}$ wrap infinitely many times around the origin, meaning that they hit each ray infinitely many times. However, none of the lines are circles.
\item[$5.$] The {quadratic form} $Q$ on $D^{\perp}$ is not negative semi-definite.
\item[$6.$] $U$ and its deformations admit no non-constant global regular {functions}.
\item[$7.$] $D$ can be blown down to get a surface $\?{Y}$ with a cusp singularity. If $D$ is minimal, $D_i^2 \leq -2$ for all $i$, and $D_i^2 \leq -3$ for some~$i$.
\end{enumerate}
\end{thm}

\begin{proof}The equivalence of (1) and the cusp singularity statement from (7) is taken as the definition of a cusp singularity in \cite{GHK1}, while the statement for $D$ minimal appears in \cite[Example~1.10]{GHK1} and is easily checked. The equivalence of (1) and (2) is \cite[Lemma~1.5, arXiv v1]{GHK1}.

(7)$\Rightarrow$(6) is in \cite{GHK2} and can be seen from Hartog's Lemma since $\?{Y}$ is compact and $\?{Y}\setminus U$ is an isolated singularity. For (6)$\Rightarrow$(1), suppose $D$ is not negative definite. If $D$ is negative semi-definite, then by Theorem~\ref{NegSD}(7) below, some deformation of $(Y,D)$ admits an elliptic fibration over $\bb{P}^1$ with $D$ as a fiber, the restriction of this fibration to the deformed $U$ gives a~non-constant global regular function. On the other hand, if $D$ is not negative semi-definite, then $\Spec \Gamma(U,\s{O}_U)$ is two-dimensional by Theorem \ref{Pos}(6) below (i.e., by \cite[Lemma~6.9]{GHK1}), and so $U$ has many non-constant global sections.

To see that (2)$\Rightarrow$(3), recall that for $\mu \in \SL_2(\bb{R})$, $|\Tr(\mu)|\leq 2$ if and only if $\mu$ is a rotation or shear map, and either of these would contradict the claim that the image of~$\delta$ lies in a~strictly convex cone (recall that $\mu^{-1}$ acts by deck transformations, cf.\ Example~\ref{focusfocus}). Similarly, if \mbox{$\Tr(\mu)<-2$}, then the eigenvalues are negative, and this also contradicts the strict convexity. So $\Tr(\mu)>2$ is the only remaining possibility. Conversely, $\Tr(\mu)>2$ implies that $\mu$ acts by hyperbolic rotation, and the geometric interpretation of this makes it clear that the image of $\delta$ must be contained in a~strictly convex cone, giving (3)$\Rightarrow$(2).

Note that (4) is equivalent to the statement that every line in $\bb{R}^2$ which does not pass through the origin and which intersects interior of the image of $\delta$ must hit the boundary of the image of $\delta$ (circles would be parallel to both boundary rays, impossible unless the image of $\delta$ is a~half-space). The equivalence of (2) and (4) is clear from this.

The equivalence of (1) and (5) is in~\cite{GHK2}. For the direction (1)$\Rightarrow$(5), note that if both~$H$ and~$Q$ were negative semi-definite, then all of $\Pic(Y)$ would be negative semi-definite. But we can assume by possibly making some additional toric blowups that $Y$ is a blowup of a projective toric variety, and then the pullback of an ample class will have positive self-intersection, a~contradiction. The converse (5)$\Rightarrow$(1) is an easy consequence of the Hodge index theorem.
\end{proof}

We now consider the cases where $H$ is negative semi-definite, but not negative definite. Once again, the following statements mostly appear in \cite{GHK1} and \cite{GHK2}, or else follow easily.
\begin{thm}[the strictly negative semi-definite cases]\label{The strictly negative semi-definite case}\label{NegSD}
The following are equivalent:
\begin{enumerate}\itemsep=0pt
\item[$1.$] The {intersection matrix} $H$ is negative semi-definite but not negative definite.
\item[$2.$] Any {developing map} $\delta$ for $U^{\trop}_0$ identifies the universal cover of $U^{\trop}_0$ with a half-plane in~$\bb{R}^2$.
\item[$3.$] The monodromy $\mu$ is $SL_2(\bb{Z})$-conjugate to a matrix of the form
	$\left(\begin{smallmatrix}
		1 &a \\
		0 &1
		\end{smallmatrix}\right)$,
		 with $a > 0$.
\item[$4.$] Some {lines} in $U^{\trop}$ are circles $($all others wrap infinitely many times around the origin$)$.
\item[$5.$] If $D$ is minimal, then $D \in D^{\perp}$, or equivalently, either $D_i^2 = -2$ for all $i$, or~$D$ is irreducible with $D^2=0$.
\item[$6.$] The {quadratic form} $Q$ on $D^{\perp}$ is negative semi-definite but not negative definite.
\item[$7.$] For $D$ minimal, $(Y,D)$ is deformation equivalent to a Looijenga pair $(Y',D')$ which admits an elliptic fibration having~$D'$ as a fiber.
\end{enumerate}
\end{thm}
\begin{proof}
The equivalence (1)$\Leftrightarrow$(2) is part of \cite[Lemma 1.5, arXiv v1]{GHK1}. The equivalence (2)$\Leftrightarrow$(3) is straightforward using the characterization of matrices with trace $\pm 2$ as shear matrices.

For (2)$\Leftrightarrow$(4), note that circles correspond to lines in the half-space $\delta\big(U_0^{\trop}\big)$ which are parallel to the boundary of this half-space. Any other line in $\bb{R}^2$ which intersects the interior of $\delta\big(U_0^{\trop}\big)$ must also hit the boundary of $\delta\big(U_0^{\trop}\big)$, and as in the proof of Theorem~\ref{NegD}, these lines wrap infinitely many times around the origin.

The equivalence (1)$\Leftrightarrow$(5) is easily checked and was stated in \cite[Section~0.5.2, arXiv v1]{GHK1}. Note that if $D$ is minimal, reducible, and has intersecting components $D_{i-1}, D_i$ with $D_i$ of self-inter\-section greater than $(-2)$ (hence $\geq 0$ by minimality of $D$) and $D_{i-1}^2\leq 0$, then $\big[\big({-}D_{i-1}^2+1\big)D_i+D_{i-1}\big]^2\geq 2-D_{i-1}^2 > 0$.

The equivalence (1)$\Leftrightarrow$(6) is also due to~\cite{GHK2}. Let us first check (1)$\Rightarrow$(6). By the statement (5)$\Rightarrow$(1) in Theorem~\ref{NegD}, we know that (1) here implies $Q$ is negative semi-definite. Then by~(5), if~$D$ is minimal, we have $D\cdot D_i=0$ for each $i$, hence $D\in D^{\perp}$ and $Q(D)=0$, thus showing that~$Q$ is not definite. If~$D$ is not minimal, let $p\colon (Y,D)\rar (Y',D')$ be a sequence of toric blowdowns to a case with minimal boundary. Then $p^*(D')$ is in $D^{\perp}$ and satisfies $Q(D')=0$.

For (6)$\Rightarrow$(1), suppose $H$ were not negative semi-definite. Then some divisor $C$ supported on~$D$ has positive self-intersection, and by the Hodge index theorem, $C^{\perp}$ (which contains $D^{\perp}$) is negative definite. So if (6) holds, then $H$ must be negative semi-definite. But by the statement (1)$\Rightarrow$(5) of Theorem \ref{NegD}, (6) implies that $H$ is not negative definite, giving (6)$\Rightarrow$(1)

Finally, (7) implies that $D\cdot D_i=0$ for each $i$, hence $D$ must have the form of (5). The converse was stated in \cite[Section~0.5.2, arXiv v1]{GHK1}, and it can be seen as follows: Let $E$ be any $(-1)$-curve hitting a component $D_i\subset D$ (e.g., an exceptional divisor from a toric model). Let~$\?{D}$ be the push-forward of $D$ after blowing down $E$, and similarly for components of $D$. Then $\?{D}^2=\?{D}\cdot \?{D}_i=1$, while $\?{D}\cdot \?{D}_j=0$ for $j\neq i$. By Riemann--Roch, the linear system~$|\?{D}|$ has dimension at least $1$. Let $C$ be any curve in $|\?{D}|$ other than $\?{D}$ itself. From the intersection numbers, we see that $C$ must be disjoint from $\?{D}$ except for a single point of intersection $p$ in the interior of $\?{D}_i$. Blowing up $p$ results in a new Looijenga pair $(Y',D')$, with $D'$ the proper transform of $D$, such that the proper transform of $C$ is linearly equivalent to and disjoint from~$D'$. Thus, $|D'|$ must contain a pencil giving an elliptic fibration of $Y'$.
\end{proof}

As stated above, if $D$ is minimal then it is either irreducible or consists of $n>1$ $(-2)$-curves. The largest possible $n$ here is $9$. This follows from Lemma~\ref{ch}, which says that the charge is $c(Y,D)=12-3n-\Tr(H)=12-n$. The charge is by definition non-negative, giving us $n\leq 12$. Furthermore, the classifications below then imply that some lines do not wrap if $c(Y,D)\leq 2$, so then $n\leq 9$. A case with $n=9$ can be explicitly constructed.

\subsection{The positive cases}\label{PositiveCase}

Several characterizations of the positive cases appear in \cite[Lemma~6.9]{GHK1}. We state some of these and others now.
\begin{thm}[the positive cases]\label{Pos} The following are equivalent:
\begin{enumerate}\itemsep=0pt
\item[$1.$] The {intersection matrix} $H$ is not negative semi-definite.
\item[$2.$] The {developing map} for $U^{\trop}_0$ is not injective.
\item[$3.$] {Lines} in $U^{\trop}$ wrap at most finitely many times, so both ends of each line go to infinity.
\item[$4.$] The {quadratic form} $Q$ on $D^{\perp}$ is negative definite.
\item[$5.$] $U$ is deformation equivalent to an affine surface.
\item[$6.$] $U$ is a minimal resolution of $\Spec (\Gamma(U,\s{O}_U))$, which is an affine surface with at worst Du Val singularities.
\item[$7.$] $D$ supports a $D$-ample divisor.
\end{enumerate}
\end{thm}
\begin{proof}The equivalences involving (1), (2), (3), and (4) follow from negating the corresponding statements in Theorems~\ref{NegD} and~\ref{NegSD}. The equivalence of (1), (6), and (7) is \cite[Lemma~6.9(1.1--1.3)]{GHK1}. For (6)$\Rightarrow$(5), we just need to know that we can deform $U$ to have no $(-2)$-curves, as these are the exceptional divisors of the minimal resolution in (6). This follows from \cite[Proposition~4.1]{GHK_MLP}. Conversely, if a deformation of $U$ is affine, then $\Spec (\Gamma(U,\s{O}_U))$ must be two-dimensional, and so we cannot be in a negative semi-definite case by Theorems~\ref{NegD}(6) and~\ref{NegSD}(7).
\end{proof}

If any of these conditions hold, we say that $U$ is {\it positive}. We have several sub-cases:

\begin{thm}[all lines wrap/positive non-acyclic cases]\label{all-wrap}
The following are equivalent:
\begin{enumerate}\itemsep=0pt	
 \item[$1.$] {Lines} in $U^{\trop}$ all wrap, but only finitely many times. 
 \item[$2.$] Every sheet of the {developing map} is convex, but the developing map is not injective. 
 \item[$3.$] Non-zero global regular {functions} on $U$ are not generically $0$ along any boundary divisor of any compactification $(Y,D)$ of $U$ $($i.e., the corresponding valuations are non-positive$)$. On the other hand, there are enough global regular functions that $\dim \Spec \Gamma(U,\s{O}_U)=2$.
 \item[$4.$] The inverse {monodromy matrix} $\mu^{-1}$ is conjugate to a Kodaira matrix\footnote{In~\cite{Kod}, Kodaira listed the matrices which can appear as monodromies about singular fibers of elliptic fibrations of surfaces. See Tables~\ref{wraptable} and~\ref{tab:no wrap} for a list of these matrices.} of type $I_k^*$, $II^*$, $III^*$, or $IV^*$. 
 \item[$5.$] If $D$ is minimal, then either $D=D_1+D_2$ with $D_1^2 = 0$ and $-1\neq D_2^2 \leq 0$ $($up to re-labelling$)$, or $D$ is irreducible with $1\leq D^2 \leq 4$. 	
 \item[$6.$] $U$ can be constructed from $\big(\bb{P}^2,D\big)$, with $D=D_1+D_2+D_3$ a triangle of lines, by blowing up~$d_i$ times on~$D_i$ for each~$i$, with $(d_1,d_2,d_3)$ as in the final column of Table~{\rm \ref{wraptable}}. Equivalently, $U$ comes from a seed with $E=(e_1,e_2,e_3)$, $F=\varnothing$, $\langle \cdot , \cdot \rangle = 	\left(\begin{smallmatrix}
		0 &1 &-1 \\
	 -1 &0 &1 \\
	 1 &-1 &0
		\end{smallmatrix}\right)$, and multipliers $(d_1,d_2,d_3)$ as in the final column of Table~{\rm \ref{wraptable}}.
\item[$7.$] $D^{\perp}_{\Eff}=D^{\perp}$, and the {quadratic form} $Q$ is of type $D_n$ $(n\geq 4)$ or $E_n$ $(n=6$, $7$, or~$8)$. 
\end{enumerate}
\end{thm}

\begin{proof}(1)$\Leftrightarrow$(2) is clear from the definitions. (1)$\Leftrightarrow$(3) follows immediately from Lemma \ref{NegativeFibers} (the ring of global regular functions being two-dimensional is equivalent to positivity).

For (1)$\Rightarrow$(5), using the construction of $U^{\trop}$ from charts in \eqref{linear}, we can easily see that having any $D_i^2>0$ with $D$ not irreducible would allow a line to not wrap. On the other hand, having every $D_i^2 \leq -2$ would mean we are in a negative semi-definite case. So if $D$ is minimal and not irreducible, then $D_i^2$ must be $0$ for some $i$. $D$ having more than one additional component would allow a non-convex sheet of the developing map, so the claim follows, except for when $D$ is irreducible. If $D$ is irreducible and $D^2 > 4$, then the proper transform of $D$ after taking a~toric blowup would have positive self-intersection, which we have already ruled out, and $D^2<1$ would mean we are in a negative semi-definite case.

For (5)$\Rightarrow$(2), observe that in the $D_1^2=D_2^2=0$ case, every sheet of any developing map is convex (but not strictly convex). The other cases come from non-toric blowups and toric blow-downs of this, so the sheets of their developing maps will of course still be convex (non-toric blowups make these sheets ``more convex'').

(5)$\Rightarrow$(4) is a straightforward check, as is (4)$\Rightarrow$(2). We now have the equivalence of~(1) through~(5).

(6)$\Rightarrow$(7) is also straightforward. For $U$ generic, $D^{\perp}$ is generated by classes of the form $E_{i,j_1}-E_{i,j_2}$ (where $E_{i,j}$ denotes the exceptional divisor from a non-toric blowup on $D_i$), together with a class of the form $L-E_{1,j_1} - E_{2,j_2} - E_{3,j_3}$, where $L$ is the class of a generic line in $\bb{P}^2$. If we choose all the blowup points on each $D_i$ to be infinitely near, and choose the blowup points on different $D_i$'s to be colinear, then $D^{\perp}$ is generated by effective divisors with the correct intersections.

For (7)$\Rightarrow$(1), $Q$ of type $D_n$ or $E_n$ implies that $Q$ is negative definite, and so Theorem \ref{Pos}(4) tells us that we are not in an $H$ negative semi-definite case. We also cannot be in a some-lines-wrap or no-lines-wrap case because, as we see below, $Q|_{D^{\perp}_{\Eff}}$ in these cases is a direct sum of~$A_{n_i}$'s.

It now suffices to show that (5)$\Rightarrow$(6) (since (4)$\Leftrightarrow$(5), this means we are showing that $U^{\trop}$ really does determine the deformation type of $U$ in these cases). For the $I_0^*$ case, we have $\mu^{-1}=-\Id$. Such a $U^{\trop}$ contains a reflexive polytope\footnote{By a reflexive polytope in $U^{\trop}$, we mean an integral polytope in the sense of \cite[Definition~2.10]{Man1} which is strongly convex in the sense of \cite[Definition~5.4]{Man1} whose only interior integral point is $0$.} with $3$ integral points on the boundary, and this implies that $U$ must be an affine cubic surface (cf.\ Example~5.21 in~\cite{Man1}), which we know can be obtained as in Example~\ref{cubicdevelop}.

Now for the $I_k^*$ cases, we can choose a compactification $(Y,D)$ of $U$ with $D_1^2=D_2^2=-1$ and $D_3^2=-1-k$. The divisor $C:=D_1+D_2$ has $C\cdot D_1=C\cdot D_2 = C^2 = 0$, and $C\cdot D_3 = 2$. By Riemann--Roch, $\dim |C|\geq 1$. If $C$ is the only singular element of some pencil $\bb{P}^1\subset |C|$, then (for~$U$ generic in its deformation class) $Y\setminus C$ is a $\bb{P}^1$-bundle over $\bb{A}^1$, hence has Euler characteristic $2$. So then $Y$ has Euler characteristic $5$. However, we know from Section~\ref{trop-charge} that~$U^{\trop}$ determines the charge $c$ of $(Y,D)$, which in this situation is $6+k$. One checks that the Euler characteristic of a Looijenga pair with $n$ boundary components and charge $c$ is $n+c$, which in this case is $9+k > 5$. So $|C|$ must contain other singular curves. These must contain irreducible rational components $E_1$, $E_2$ with $E_i\cdot D_3 = 1$ and $E_i^2=-1$. Blowing down either of these is a non-toric blowdown and reduces us to the $I_{k-1}^*$ case, so the claim follows by induction.

For the $IV^*$ case, we have a compactification of $U$ with $D=D_1+D_2+D_3$, $D_1^2=-1$, $D_2^2=D_3^2 = -2$. Note that $D\cdot D_1=1$, while $D\cdot D_2=D\cdot D_3 = 0$, so $\dim |D|\geq 1$. Thus, there is some point on $D_1$ which we can blow up to get a new pair $\big(\wt{Y},\wt{D}\big)$, with exceptional divisor~$E$, such that $\wt{Y}$ admits an elliptic fibration with $\wt{D}$ being a fiber and~$E$ being a~section. Such a surface can be obtained by blowing up $9$ base-points for a pencil of cubics in~$\bb{P}^2$, with~$E$ being the exceptional divisor of the final blowup (cf.~\cite{HL}). $\wt{D}$ then is the proper transform of one of the cubics $\?{D}$ in the pencil, so there must have been $3$ base-points on each component~$\?{D}_i$ of~$\?{D}$. Thus, after blowing~$E$ down, we see that~$Y$ must contain disjoint $(-1)$-curves hitting each component of~$D$. Blowing down a $(-1)$-curve hitting, say, $D_2$, reduces to the $I_1^*$ case we have already dealt with.

A similar argument works for the $III^*$ case using a compactification of $U$ with $D=D_1+D_2$, $D_1^2=-1$, $D_2^2 = -2$, and blowing up a point in~$D_1$ to get a surface with an elliptic fibration. The $II^*$ case is also similar, using~$D$ irreducible with self-intersection $1$ and blowing up some point in~$D$ to get a surface with an elliptic fibration.
\end{proof}

Table \ref{wraptable} summarizes the different cases from the Theorem~\ref{all-wrap} above.

\begin{table}[h] \centering
 \begin{tabular}{ | l | l | l | l | l |}
 \hline
 Kodaira matrix & Cartan form $Q$ & Monodromy $\mu$ & $(d_1,d_2,d_3)$ \\ \hline
 $I^*_{k}$ ($k\geq 0$) & $D_{k+4}$ & 	 $\left(\begin{matrix}
		-1 &k \\
	 0 &-1
		\end{matrix}\right)$				& $(2,2,2+k)$ 	 \\ \hline
 $IV^*$ & $E_6$ & 		$\left(\begin{matrix}
		0 &1 \\
	 -1 &-1
		\end{matrix}\right)$		 & $(2,3,3)$	 \\ \hline
 $III^*$ &$E_7$ &	$\left(\begin{matrix}
		0 &1 \\
	 -1 &0
		\end{matrix}\right)$	 	 & $(2,3,4)$	 \\ \hline
 $II^*$ &$E_8$ & 	$\left(\begin{matrix}
		1 &1 \\
	 -1 &0
		\end{matrix}\right)$				& $(2,3,5)$		 \\ \hline
 \end{tabular}\caption{Cases where all lines wrap.}\label{wraptable}
 \end{table}

\begin{thm}[not all lines wrap/acyclic cases]\label{not-all-wrap}
The following are equivalent:
\begin{enumerate}\itemsep=0pt
\item[$1.$] $U^{\trop}$ contains a {line} which does not wrap.
\item[$2.$] Some compactification of $U$ admits a toric model $Y\rar \?{Y}$ for which all the non-toric blowups are on divisors corresponding to rays in one half of $N_{\?{Y}}$. I.e., there is some seed~$S$ for which all of the non-frozen vectors' images in~$p_2^*(N)$ lie in one half of the plane.
\item[$3.$] Cluster varieties corresponding to~$U$ are {acyclic}.
\item[$4.$] The intersection of the Langlands dual {cluster complex} $\s{C}\subset \s{X}^{\trop}$ with $U^{\trop}$ is non\-empty.
\item[$5.$] There exists a {global monomial} on~$U$.
\item[$6.$] The {quadratic form} $Q$ on $D^{\perp}$ is negative definite, and $Q|_{D^{\perp}_{\Eff}}$ is a direct sum of $A_{n_i}$'s. In fact, it is $A_{d'_1-1}\oplus \cdots \oplus A_{d'_m-1}$, where the $(d'_i)$'s are the modified multipliers for a coprime seed corresponding to~$U$ $($equivalently, $d_i'$ is the number of non-toric blowups on~$D_i$ in a~toric model for a~compactification~$U)$.
\end{enumerate}
\end{thm}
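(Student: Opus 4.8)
The plan is to close a cycle of implications through conditions (1)--(5) using the earlier lemmas, and then to fold in (6) by combining the dichotomy of \S\ref{PositiveCase} and Theorem \ref{all-wrap} with an explicit description of $D^{\perp}_{\Eff}$. For (1)$\Rightarrow$(2) I would invoke Lemma \ref{model}: a non-wrapping line $L$ goes to infinity in two directions bounding a cone $\sigma_L\subset U^{\trop}$ lying in a closed half-plane, and Lemma \ref{model} produces a compactification with a toric model whose non-toric blowups lie on divisors corresponding to rays in $\sigma_L$, which by Construction \ref{LooijengaConstr} is exactly a seed whose non-frozen vectors $v_i$ lie in a closed half-plane. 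For (2)$\Rightarrow$(1), given such a seed $S$ with all $v_i$ ($i\in I\setminus F$) in a closed half-plane $H$, a straight line of $U^{\trop}_S$ contained in the open half-plane complementary to $H$ avoids every ray $\rho_i=\mathbb{R}_{\geq 0}v_i$, so by the description of the integral linear structure in \S\ref{SeedScatter} it is straight in $U^{\trop}$; since it misses at least one ray it does not wrap.

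Next, (2)$\Leftrightarrow$(3) is essentially a matter of definitions: by the criterion recalled in \S\ref{quivers} a seed is acyclic exactly when its non-frozen $v_i$ lie in a closed half-space of $\overline{N_2}$, and via Construction \ref{LooijengaConstr} these are precisely the seeds arising from the toric models in (2), while a cluster variety is acyclic when some such seed exists. For (2)$\Leftrightarrow$(4) I would carry out a short linear-algebra computation of $C_S\cap U^{\trop}$: writing a point of $U^{\trop}\subset\mathcal X^{\trop}$ as $p_2^*(w)$ and imposing the inequalities $e_i\geq 0$ defining $C_S$ gives $\overline{e_i}(w)\geq 0$ for $i\in I\setminus F$, and under the canonical identification $\overline{N_1}\cong\overline{N_2}$ (up to sign) this cuts out the dual of the cone generated by the $v_i$; its image in $U^{\trop}$ exceeds the singular origin exactly when the $v_i$ lie in a closed half-plane. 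Taking the union over mutation-equivalent seeds, $\mathcal C\cap U^{\trop}$ is non-trivial iff some mutation-equivalent seed is acyclic. Finally, (1)$\Rightarrow$(5) is immediate from Lemma \ref{GlobalMonomial} (for $q$ in the cone $\sigma_L$ of a non-wrapping line, $\vartheta_q$ is a global monomial on $U$), and (5)$\Rightarrow$(4) follows from \cite{GHKK}, since a global monomial on $\mathcal X$ restricting to a nonzero function on $U$ is a theta function $\vartheta_m$ with $m$ an integral point of $\mathcal C$ surviving on the fiber, so $\mathcal C\cap U^{\trop}\neq\{0\}$. This closes the cycle (1)$\Rightarrow$(5)$\Rightarrow$(4)$\Rightarrow$(2)$\Rightarrow$(1), with (3) attached via (2)$\Leftrightarrow$(3).

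It remains to bring in (6). That $Q$ is negative definite is equivalent to $U$ being positive by \S\ref{PositiveCase}, and the positive cases split into ``all lines wrap'' (Theorem \ref{all-wrap}, where $Q$ has type $D_n$ or $E_n$) and the present ``not all lines wrap'' case, so the negative-definiteness here is automatic and everything reduces to computing $Q|_{D^{\perp}_{\Eff}}$. For a coprime seed with $d_i'$ non-toric blowups on $D_i$, I would exhibit the classes $E_{i,1}-E_{i,2},\ldots,E_{i,d_i'-1}-E_{i,d_i'}$: they lie in $D^{\perp}$, are pairwise orthogonal for distinct $i$, and for fixed $i$ span an $A_{d_i'-1}$ root lattice under $Q$; choosing the $d_i'$ blowup points on $D_i$ infinitely near realizes each by an effective $(-2)$-curve, so $\bigoplus_i A_{d_i'-1}\subseteq(D^{\perp}_{\Eff},Q)$. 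For the reverse inclusion I would use (2): all non-toric blowups sit on the toric boundary divisors of a single toric model, so any class in $D^{\perp}$ effective on some fiber is a nonnegative combination of $(-2)$-curves disjoint from $D$, and the deformation theory of \cite{GHK2} (blowup points slide along the $\overline{D}_i$ but never onto nodes) forces such curves to be proper transforms arising within a single $D_i$-chain, so nothing outside $\bigoplus_i A_{d_i'-1}$ is effective.

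I expect this reverse inclusion in (6) --- that no class in $D^{\perp}$ outside $\bigoplus_i A_{d_i'-1}$ is ever effective on a fiber --- to be the main obstacle, since it requires controlling which $(-2)$-curve configurations can appear across the whole family as the non-toric blowup points vary, rather than merely exhibiting the configurations that do appear. The remaining equivalences are comparatively formal given Lemmas \ref{model}, \ref{GlobalMonomial}, and \ref{NegativeFibers}, the acyclicity criterion of \S\ref{quivers}, and \cite{GHKK}.
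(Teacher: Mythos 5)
Your cycle through (1)--(5) matches the paper's proof almost step for step: (1)$\Leftrightarrow$(2) via Lemma \ref{model}, (2)$\Leftrightarrow$(3) from the half-plane criterion for acyclicity in \S \ref{quivers}, (2)$\Leftrightarrow$(4) by the same half-space computation, and (1)$\Rightarrow$(5) from Lemma \ref{GlobalMonomial}. The one structural difference is that you close the cycle with (5)$\Rightarrow$(4) via the \cite{GHKK} identification of global monomials with theta functions indexed by the cluster complex, whereas the paper closes it with (5)$\Rightarrow$(1): a global monomial has tropicalization that is positive somewhere, while Lemma \ref{NegativeFibers} shows that if every line wraps then $f^{\trop}$ is non-positive everywhere for every regular function. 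The paper's route is safer, because your step quietly requires that the index of the global monomial land in $\s{C}\cap U^{\trop}$ rather than merely in $\s{C}\subset(\s{X}^{\vee})^{\trop}$; a global monomial on $\s{X}$ can restrict to a constant on the fiber $U$ (e.g.\ monomials pulled back along $\lambda$), and sorting out which indices survive nontrivially on $U^{\trop}$ is essentially the content of the Proposition that follows the theorem, so as written your argument is close to circular.

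The genuine gap is the reverse inclusion in (6), which you correctly flag as the main obstacle but for which your proposed strategy (controlling which $(-2)$-curve configurations can occur as the blowup points vary, via the deformation theory of \cite{GHK2}) is not the right tool. The paper's argument is short and uses condition (2) directly: let $(Y,D)\rar(\?{Y},\?{D})$ be the toric model with all non-toric blowups on divisors whose rays lie in one half-plane, let $C\in D^{\perp}$ be an irreducible curve effective on some fiber, and let $\?{C}$ be its image. Standard toric geometry gives $\sum_i(\?{C}\cdot\?{D}_i)\,v_i=0$; each $\?{C}\cdot\?{D}_i\geq 0$, and since $C\cdot D_i=0$ the number $\?{C}\cdot\?{D}_i$ can be positive only when $\?{C}$ passes through a non-toric blowup point of $\?{D}_i$, i.e.\ only for $v_i$ in the chosen half-plane. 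A vanishing non-negative combination of such $v_i$ forces every $\?{C}\cdot\?{D}_i=0$, so $C$ is supported on the exceptional divisors, and the only effective classes in $D^{\perp}$ so supported are the differences $E_{i,j}-E_{i,j+1}$ coming from infinitely near blowups. This is the missing idea; without it you have only exhibited the inclusion $\bigoplus_i A_{d_i'-1}\subseteq D^{\perp}_{\Eff}$, not the equality that statement (6) asserts.
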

\begin{proof}(1)$\Leftrightarrow$(2) is Lemma \ref{model}. (2)$\Leftrightarrow$(3) was observed in Section~\ref{quivers}.

For (2)$\Leftrightarrow$(4), note that for some seed vector $e_i$ for a seed $S$, the set $\{e_i\geq 0\}\cap U^{\trop}$ is the same as the set $(v_i\wedge \cdot) \geq 0$, where $\wedge$ is the symplectic form on $U^{\trop}$ induced by $[ \cdot,\cdot]$. The intersection of these positive half-spaces for all non-frozen $e_i$'s is clearly nonempty if and only if~$S$ is as in~(2).

 (1)$\Rightarrow$(5) follows from Lemma \ref{GlobalMonomial}. For (5)$\Rightarrow$(1), note that for a global monomial $\vartheta_q$, the tropicalization $\vartheta_q^{\trop}$ is positive somewhere, and so Lemma \ref{NegativeFibers} implies that the fibers $\vartheta_q^{\trop}=d<0$ are lines which do not wrap.

 (6)$\Rightarrow$(1) because if every line does wrap (possibly infinitely many times), then we have seen that either $Q$ is not negative-definite or $Q|_{D^{\perp}_{\Eff}}$ is of type $D_n$ or $E_n$.

 For (2)$\Rightarrow$(6), first note that $Q$ is negative definite on $D^{\perp}$ by positivity of $U$. Now, let $(Y,D)\rar \big(\?{Y},\?{D}\big)$ be the toric model corresponding to a seed with all non-toric blowups corresponding to rays in one half of the plane $N_{\?{Y}}$. For any curve $\?{C}$ in $\?{Y}$, $\sum \big(\?{C}\cdot \?{D}_i\big) v_i=0$ where $v_i$ is the primitive vector in $N_{\?{Y}}$ corresponding to $\?{D}_i$. If $\?{C}$ is the image of an irreducible effective curve $C\in D^{\perp}$, then $\?{C}\cdot \?{D}_i \geq 0$ for all $i$, and $\?{C}\cdot \?{D}_i$ can only be positive if there is a non-toric blowup point somewhere in $\?{C} \cap \?{D}_i$. Thus, each $\?{C}\cdot \?{D}_i$ must actually be $0$, so $C$ must have been supported on an exceptional divisor. Thus, $D^{\perp}_{\Eff}$ is generated by classes obtained by taking the $d_i'$ blowups to be infinitely near, and then taking the $d_i'-1$ exceptional divisors which do not intersect $D$.
\end{proof}

Let $\s{C}_U$ denote the union of all cones $\sigma_L$ for lines $L$ which do not wrap, where $\sigma_L$ is defined as in Lemma~\ref{model}. We note that the argument for (2)$\Leftrightarrow$(4) above can be modified to prove the following:
\begin{prop} $\s{C}_U$ is the intersection of the Langlands dual cluster complex $\s{C}$ with $U^{\trop}\subset \s{X}^{\trop}$.
\end{prop}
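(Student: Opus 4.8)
The plan is to reduce both sides of the asserted equality to explicit unions of cones in $U^{\trop}$ and then match them, reusing the mechanics of the proof of (2)$\Leftrightarrow$(4) in Theorem~\ref{not-all-wrap}. Recall from that proof the identity $\{e_i\ge 0\}\cap U^{\trop}=\{(v_i\wedge\cdot)\ge 0\}$, which gives, for every seed $S$,
\[
	C_S\cap U^{\trop}=\bigcap_{i\in I\setminus F}\{(v_i\wedge\cdot)\ge 0\}\subset U^{\trop}.
\]
This is a genuine $2$-dimensional cone when the non-frozen $v_i$ lie in a common open half-space, a single ray when they lie in a closed but in no open half-space, and empty otherwise; in particular it is nonempty exactly when $S$ is acyclic (cf.\ \S\ref{quivers}), so that $\s{C}\cap U^{\trop}=\bigcup_{S'}\bigl(C_{S'}\cap U^{\trop}\bigr)$, the union taken over acyclic seeds $S'$ mutation equivalent to $S$. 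The goal is thus to identify this with $\bigcup_L\sigma_L$, where $L$ ranges over non-wrapping lines.

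For the inclusion $\s{C}\cap U^{\trop}\subseteq\s{C}_U$, fix an acyclic seed $S$ and a corresponding toric model $(Y,D)\to(\?{Y},\?{D})$ (\S\ref{Looijenga seeds}), all of whose non-toric blowups lie on rays $\rho_{v_i}$ contained in a closed half-space of $\?{N_2}\otimes\bb R$. Since these blowups alter the integral linear structure of $U^{\trop}$ only across the rays $\rho_{v_i}$, each straight line of $\?{N_2}\otimes\bb R$ that avoids $\bigcup_i\rho_{v_i}$ descends to a non-wrapping straight line of $U^{\trop}$; allowing the line also to cross some of the $\rho_{v_i}$ and bend toward the origin in the manner of \S\ref{SeedScatter} produces the remaining non-wrapping lines whose cone sits inside that half-space. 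Following the asymptotic directions of these lines one checks that the cones $\sigma_L$ obtained this way sweep out exactly $\bigcap_{i\in I\setminus F}\{(v_i\wedge\cdot)\ge 0\}=C_S\cap U^{\trop}$ (in the extremal case, $C_S\cap U^{\trop}$ is already equal to $\sigma_L$ for a single non-wrapping line). Hence $C_S\cap U^{\trop}\subseteq\s{C}_U$.

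For the reverse inclusion $\s{C}_U\subseteq\s{C}\cap U^{\trop}$, take a non-wrapping line $L$: Lemma~\ref{model} provides a toric model for some compactification of $U$, hence a seed $S$ (via \S\ref{Looijenga seeds}), whose non-toric blowups all correspond to rays lying in $\sigma_L$, and this seed is acyclic. One then checks that the asymptotic directions of $L$ bound $C_S\cap U^{\trop}$, and more generally that mutating $S$ at a non-frozen index $j$ — which by \S\ref{mutmono} acts on $\?{N_2}$ by $v_j\mapsto-v_j$ and $v_i\mapsto v_i+(v_i\wedge v_j)v_j$ when $(v_i\wedge v_j)>0$, following Equation~\ref{seedmut} — corresponds to pushing a straight line across the ray $\rho_{v_j}$, so that the cones $C_{S'}\cap U^{\trop}$ for the acyclic seeds $S'$ in the mutation class subdivide $\sigma_L$. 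Therefore $\sigma_L\subseteq\bigcup_{S'}(C_{S'}\cap U^{\trop})\subseteq\s{C}\cap U^{\trop}$, and taking the union over all non-wrapping $L$ finishes the argument; since $\s{C}$ is a fan \cite{GHKK}, these per-cone identifications patch together with overlaps only along shared faces.

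The step I expect to be the main obstacle is precisely this cone-matching bookkeeping: checking that the non-wrapping lines attached via Lemma~\ref{model} to the mutation class of a seed $S$ have asymptotic directions that run exactly over the rays bounding the cones $C_{S'}\cap U^{\trop}$, equivalently that ``pushing a straight line across $\rho_{v_j}$'' is realized on the cluster side by the mutation $\mu_j$. Using the local model for a straight line crossing a ray from \S\ref{SeedScatter} together with the matrices $\?{\mu}_{j,S}$ of \S\ref{mutmono}, this should come down to a finite and essentially local computation, with the fan structure of the cluster complex the only global input.
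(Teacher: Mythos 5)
Your proposal is correct and follows essentially the same route as the paper, which in fact gives no detailed proof at all: it merely remarks that "the argument for (2)$\Leftrightarrow$(4) above can be modified," i.e.\ precisely the identity $\{e_i\ge 0\}\cap U^{\trop}=\{(v_i\wedge\cdot)\ge 0\}$ combined with Lemma~\ref{model}'s correspondence between non-wrapping lines and acyclic seeds that you spell out. Your write-up is a reasonable fleshing-out of that one-line remark, and the cone-matching step you flag as the main obstacle is indeed the content the paper leaves implicit.
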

This justifies \cite{Man1} calling $\s{C}_U$ the cluster complex.

\begin{thm}[no lines wrap/finite-type cases]\label{no-wrap}
The following are equivalent:
\begin{enumerate}\itemsep=0pt
\item[$1.$] No {lines} in $U^{\trop}$ wrap.
\item[$2.$] No sheet of the {developing map} is convex.
\item[$3.$] The Laurent phenomenon holds for the $\s{X}$-space, meaning that each $X_i$ is a global monomial. Furthermore, the {global monomials} form an additive basis for the global function on~$U$.
\item[$4.$] The inverse {monodromy} matrix $\mu^{-1}$ is a Kodaira matrix of type $I_k$, $II$, $III$, or~$IV$.
\item[$5.$] Cluster structures for $U$ are of {finite type}, meaning that they have only a finite number of distinct seeds.
\item[$6.$] For some equivalent maximally factored seed, the corresponding quiver $($after removing frozen vectors$)$ is of type $A_1^k$ $(k\in \bb{Z}_{\geq 0})$, $A_2$, $A_3$, or $D_4$.
\item[$7.$] The Langlands dual {cluster complex} $\s{C}\subseteq \s{X}^{\trop}$ contains all of~$U^{\trop}$, and in fact is all of~$\s{X}^{\trop}$.
\end{enumerate}
\end{thm}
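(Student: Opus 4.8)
The plan is to prove the equivalences by a web of implications, in the spirit of the proof of Theorem \ref{all-wrap} and using Theorem \ref{not-all-wrap} as a starting point, since ``no lines wrap'' is precisely the finite-type sub-case of the acyclic situation. I would dispose of the purely tropical equivalences first. Here $(1)\Leftrightarrow(2)$ is immediate from the dictionary between lines and the developing map in \S\ref{dev}--\S\ref{lines}: a line can wrap exactly when it stays trapped in one convex sheet, so some sheet is convex iff some line wraps. Likewise $(1)\Leftrightarrow(4)$ follows mostly by elimination: Equation \ref{genmut} and \S\ref{mutmono} show every integral linear structure of the relevant type arises from a factorization $\mu^{-1}=\prod\mu_{u_i}^{-k_i}$ into focus--focus pieces, and the negative (semi)definite cases together with Theorem \ref{all-wrap} account for all monodromies whose $U^{\trop}$ has a wrapping line; what is left --- the ``$+$'' parabolics and the finite-order elliptics not appearing in Table \ref{wraptable} --- is exactly the list $I_k$, $II$, $III$, $IV$, and for each one checks directly (from the charts of Remark \ref{charts}, as in the $(5)\Rightarrow(2)$ step of Theorem \ref{all-wrap}) that no sheet of the developing map is convex. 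Finally, combining the Proposition preceding this theorem ($\s{C}_U=\s{C}\cap U^{\trop}$) with the fact that $\s{C}_U$ exhausts $U^{\trop}$ exactly when no line wraps (Lemma \ref{model} plus a covering argument), and with the characterization of finite type via completeness of the cluster-complex fan (\cite{GHKK}, \cite{FG1}), gives $(1)\Leftrightarrow(5)\Leftrightarrow(7)$.

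For the quiver description $(5)\Leftrightarrow(6)$, I would use that a skew-symmetric cluster algebra is of finite type iff some seed has Dynkin quiver (\cite{FG1}): among connected skew-symmetric Dynkin quivers only $A_2$, $A_3$, and $D_4$ have exchange matrix of rank $2$ (all larger ones have rank $\geq 4$), $A_1$ has rank $0$, and --- by the argument sketched in the $A_1\sqcup A_3$ remark, that a nontrivial Dynkin summand already carries vectors spanning the plane so additional mutable $A_1$'s cannot be wedge-orthogonal to it --- the non-frozen part of a rank-$2$ seed must be one of $A_1^k$, $A_2$, $A_3$, $D_4$; conversely each of these is realizable by a toric model, so this is a finite check. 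Then $(6)\Rightarrow(4)$ is a direct computation with the factorization of \S\ref{mutmono} and Equation \ref{genmut}: realize $A_1^k$ by $k$ non-toric blowups on a single ray, $D_4$ by three on one ray and one on a complementary ray, $A_2$ by one blowup each on two lines of a triangle, $A_3$ by two on one and one on another, and read off $\mu^{-1}$ as $I_k$, $II$, $III$, $IV$ respectively, as recorded in Table \ref{tab:no wrap}.

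The step I expect to be the main obstacle is $(1)\Rightarrow(6)$ (equivalently, given the tropical equivalences, $(4)\Rightarrow(5)$): the assertion that in these cases $U^{\trop}$ actually determines the deformation type of $U$, so that a no-wrap $U$ is forced to be one of the four explicit families. By Remark \ref{factor} this is genuinely something to prove --- a priori inequivalent factorizations of a Kodaira monodromy could come from non-deformation-equivalent Looijenga pairs. My plan mirrors the starred-case arguments in the proof of Theorem \ref{all-wrap}: $U^{\trop}$ pins down the charge by Proposition \ref{TropCharge}, hence the total number of non-toric blowups; one then passes to a minimal compactification $(Y,D)$ whose intersection matrix $H$ is forced by $\mu$, applies Riemann--Roch to produce a suitable pencil (a line or conic pencil for $I_k$, $II$, $III$; an elliptic pencil, after one extra blowup, for $IV$) whose reducible members carry $(-1)$-curves meeting the boundary, and blows these down to induct on the charge down to the toric ($I_0$) case. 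The delicate bookkeeping is fixing $H$ and checking that the charge/Euler-characteristic identities exclude stray configurations, though it is lighter here than in the starred cases because the charges are small.

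Finally, $(1)\Leftrightarrow(3)$. If no line wraps, then $\s{C}_U=U^{\trop}$, so Lemma \ref{GlobalMonomial} makes every $\vartheta_q$ ($q\in U^{\trop}(\bb{Z})$) a global monomial; feeding this through the analysis of global monomials in \S\ref{Man} gives the Laurent phenomenon for $\s{X}$ (each $X_i$ extends to $U$), and since $U$ is positive the theta functions form an additive basis of $\Gamma(U,\s{O}_U)$ (\cite{GHK1}, \cite{GHK2}), which here consists entirely of global monomials. Conversely, if every $X_i$ is a global monomial and the global monomials span $\Gamma(U,\s{O}_U)$, then for each $q$ the tropical function $\vartheta_q^{\trop}$ is positive somewhere, so by Lemma \ref{NegativeFibers} no regular function vanishes along any boundary divisor; by the description of $\vartheta_q^{\trop}$ in \cite{Man1} this forces every line not to wrap (alternatively, Theorem \ref{not-all-wrap} puts us in the acyclic case and the monodromy classification above rules out the ``some but not all wrap'' subcase). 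This closes the cycle of equivalences.
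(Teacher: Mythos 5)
Your web of implications does close, and most of the individual steps coincide with the paper's: $(1)\Leftrightarrow(2)$ by inspection, $(1)\Leftrightarrow(3)$ via Lemmas \ref{GlobalMonomial} and \ref{NegativeFibers}, $(5)\Leftrightarrow(6)$ via the Fomin--Zelevinsky finite-type criterion plus the rank-$2$ check, $(6)\Rightarrow(4)$ by direct computation of the monodromies in Table \ref{tab:no wrap}, and $(4)\Rightarrow(1)$ by elimination against the other classification theorems. The two places you diverge are worth flagging. First, for $(1)\Rightarrow(5)$ the paper does not go through completeness of the cluster complex: it covers $U^{\trop}$ by finitely many cones $\sigma_L$ and invokes Lemma \ref{model} to conclude there are only finitely many $(-1)$-curves meeting the boundary, hence finitely many seeds; your route through $\s{C}_U=U^{\trop}$ and fan-completeness is essentially the paper's $(5)\Leftrightarrow(7)$ argument repurposed, and it needs the extra (unstated) step that finitely many $2$-dimensional cones in $U^{\trop}$ forces finitely many seeds. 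Second, and more importantly, the step you single out as the main obstacle --- a Riemann--Roch/blow-down induction to show $U^{\trop}$ determines the deformation type, mirroring the starred cases of Theorem \ref{all-wrap} --- is not needed and does not appear in the paper's proof of this theorem: once $(1)\Rightarrow(5)$ is in hand, $(5)\Rightarrow(6)$ via the Cartan-matrix criterion already pins down the equivalence class of the seed, hence the deformation type, with no surface geometry required. (Relatedly, your standalone ``$(1)\Rightarrow(4)$ by elimination'' is not self-contained --- it would require knowing a priori the complete list of achievable monodromy conjugacy classes, and trace alone does not separate, e.g., $II$ from $II^*$ --- but this is harmless since $(1)\Rightarrow(5)\Rightarrow(6)\Rightarrow(4)$ supplies that direction; elimination is only needed, and only used by the paper, for the converse $(4)\Rightarrow(1)$.) So your proposal is correct in outline but substitutes a heavier geometric argument for the key step where the paper's cluster-algebraic shortcut suffices.
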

\begin{proof}(1)$\Leftrightarrow$(2) is obvious. (1)$\Leftrightarrow$(3) follows from Lemma~\ref{GlobalMonomial}.

To see that (1) implies (5), we need Lemma~\ref{model}, which says that for any line~$L_q^{d<0}$ which does not wrap, there are only finitely many $(-1)$-curves hitting boundary divisors corresponding to rays in the cone $\sigma_{L}$ bounded by $L_q^{d<0}(\pm \infty)$. Since no lines wrap, we can cover $U^{\trop}$ by finitely many cones of the form $\sigma_{L}$, and so there are only finitely many $(-1)$-curves in $Y$ hitting the boundary. Since seeds correspond to certain finite subsets of this collection of $(-1)$-curves, the claim follows.

(5)$\Leftrightarrow$(6) follows from a well-known result of \cite{FZ2}, which says that a cluster algebra is of finite type if and only if the matrix $(-|\epsilon_{ij}|+2\delta_{ij})_{i,j\in I\setminus F}$ is a finite type Cartan matrix. One easily checks that the only quivers of this type which produce rank $2$ cluster varieties are those listed in the statement of theorem, along with types $B_2$, $B_3$, $C_3$, and $G_2$, which are equivalent to types~$A_3$,~$D_4$,~$D_4$, and $D_4$ again, respectively, in the sense of Definition \ref{EquivalentClusters}.

 One can easily check (6)$\Rightarrow$(4) by explicit computations: the $A_1^k$, $A_2$, $A_3$, and $D_4$ quivers correspond to the $I_k$, $II$, $III$, and $IV$ matrices, respectively. (4)$\Rightarrow$(1) is now automatic.

For (5)$\Leftrightarrow$(7), recall that seeds are in bijection with cones of the cluster complex. For any boundary wall $W$ of any cone in $\s{C}$, both sides of $W$ will always be in $\s{C}$, so if there are only finitely many cones, then~$\s{C}$ must fill up all of~$\s{X}^{\trop}$. Conversely, if there are infinitely many cones, then they must ``bunch up'' near some ray $\rho$ which is not in $\s{C}$.
\end{proof}

Table \ref{tab:no wrap} lists the cases where no lines wrap, along with their basic properties. We once again use the notation $(d_1,d_2,d_3)$ to indicate that such a Looijenga pair can be obtained by starting with the toric variety $\big(\bb{P}^2,D=D_1+D_2+D_3\big)$, and then blowing up $d_1$, $d_2$, and $d_3$ points on~$D_1$,~$D_2$, and~$D_3$, respectively.
\begin{table}[h]\centering
 \begin{tabular}{ | l | l | l | l | l|}
 \hline
 Quiver & Kodaira matrix & Cartan form $Q$ & Monodromy $\mu$ & $(d_1,d_2,d_3)$ \\ \hline
 $A_1^k$ ($k\geq 0$) & $I_{k}$ & $A_{k-1}$ & 	 $\left(\begin{matrix}
		1 &-k \\
	 0 &1
		\end{matrix}\right)$				 		& $(k,0,0)$ \\ \hline
 $A_2$ & $II$ & $A_0$ & 		$\left(\begin{matrix}
		0 &-1 \\
	 1 &1
		\end{matrix}\right)$		& $(1,1,0)$	 \\ \hline
 $A_3$ & $III$ &$A_1$ &	$\left(\begin{matrix}
		0 &-1 \\
	 1 &0
		\end{matrix}\right)$	 	 & $(2,1,0)$	 \\ \hline
 $D_4$ & $IV$ &$A_2$ & 	$\left(\begin{matrix}
		-1 &-1 \\
	 1 &0
		\end{matrix}\right)$			& $(3,1,0)$			 \\ \hline
 \end{tabular}
 \caption{Cases where no lines wrap.}\label{tab:no wrap}
\end{table}

\begin{rmk}\label{NeedFrozen}
Without frozen vectors, the $I_k$ cases, $k\geq 0$, are actually of rank $0$. Thus, although we tend to ignore frozen vectors, they are necessary for constructing these examples. They are also necessary for many other examples~-- this was reflected in Construction \ref{LooijengaConstr} when we required that the vectors $u_1,\dots,u_m$ generate $N_{\?{Y}}$.
\end{rmk}

\begin{prop}[some lines wrap and some do not]\label{SomeWrap} The following are equivalent:
\begin{enumerate}\itemsep=0pt
\item[$1.$] Some {lines} in $U^{\trop}$ wrap, while others do not.
\item[$2.$] Some $($but not all$)$ sheets of the {developing map} are convex.
\item[$3.$] Cluster varieties corresponding to~$U$ are {acyclic} but not of {finite type}.
\item[$4.$] The {monodromy} satisfies $\Tr(\mu) \leq -2$, and if there is equality, then $\mu$ is conjugate to $
	\left(\begin{smallmatrix}
		-1 &a \\
		0 &-1
		\end{smallmatrix}\right)$ for some $a<0$.
\end{enumerate}
\end{prop}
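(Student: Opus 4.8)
The plan is to deduce Proposition~\ref{SomeWrap} entirely from the classification already in hand: Theorems~\ref{all-wrap}, \ref{not-all-wrap}, and \ref{no-wrap}, together with the negative definite and strictly negative semidefinite cases listed above. These cases partition all log Calabi--Yau surfaces, so ``some lines wrap and some do not'' is exactly the complement of the union of all the other cases, and the task is to recognize that complement in each of the four stated forms.

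First I would dispose of $(1)\Leftrightarrow(3)$ and $(1)\Leftrightarrow(2)$, which require no new input. For $(1)\Leftrightarrow(3)$: by Theorem~\ref{not-all-wrap} the presence of a non-wrapping line in $U^{\trop}$ is equivalent to acyclicity, and by Theorem~\ref{no-wrap} the absence of any wrapping line is equivalent to finite type; hence ``some wrap, some do not'' is precisely ``acyclic but not of finite type.'' For $(1)\Leftrightarrow(2)$: Theorem~\ref{no-wrap}(2) gives ``no sheet of the developing map is convex'' $\Leftrightarrow$ ``no line wraps,'' while Theorem~\ref{all-wrap}(2) together with the developing-map descriptions of the negative (semi)definite cases gives ``every sheet is convex'' $\Leftrightarrow$ ``every line wraps'' (in the positive case one also uses that the developing map is never injective); subtracting, ``some but not all sheets are convex'' $\Leftrightarrow$ (1).

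The heart of the matter is $(1)\Leftrightarrow(4)$. I would first note that (1) forces $U$ to be positive, since a non-wrapping line excludes both negative (semi)definite cases, where every line is either a circle or wraps infinitely often. Thus (1) is equivalent to: $U$ is positive but lies in neither the ``all lines wrap'' case of Theorem~\ref{all-wrap} nor the ``no lines wrap'' case of Theorem~\ref{no-wrap}, and it suffices to read off the monodromies of every other case. These are: $\Tr(\mu)>2$ (negative definite); $\Tr(\mu)=2$ with $\mu$ conjugate to $\left(\begin{smallmatrix}1&a\\0&1\end{smallmatrix}\right)$, $a>0$ (strictly negative semidefinite); the four rows of Table~\ref{tab:no wrap}, with $\Tr(\mu)\in\{2,1,0,-1\}$ (finite type); and the four rows of Table~\ref{wraptable}, with $\Tr(\mu)\in\{-2,-1,0,1\}$ and, when $\Tr(\mu)=-2$, $\mu$ conjugate to $\left(\begin{smallmatrix}-1&k\\0&-1\end{smallmatrix}\right)$ with $k\ge 0$ (all lines wrap). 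Using the standard facts that $SL_2(\bb Z)$ has exactly two conjugacy classes of elements of each order $3$, $4$, $6$ --- realized by the pairs $(IV,IV^*)$, $(III,III^*)$, $(II,II^*)$ of Tables~\ref{tab:no wrap} and~\ref{wraptable} --- and that an element of trace $\pm 2$ is $\pm\Id$ or conjugate to $\pm\left(\begin{smallmatrix}1&a\\0&1\end{smallmatrix}\right)$ for a unique $a\neq 0$, one checks that the union of all the non-(1) cases is exactly the set of $\mu$ with $\Tr(\mu)\ge -1$, together with $\mu=-\Id$ and the $\mu$ conjugate to $\left(\begin{smallmatrix}-1&k\\0&-1\end{smallmatrix}\right)$ with $k>0$; its complement is precisely condition~(4).

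The main obstacle is this last bookkeeping: trace alone does not separate the finite-type cases from the all-wrap cases (for instance $IV$ and $IV^*$ both have trace $-1$), so one has to invoke that the relevant $SL_2(\bb Z)$-conjugacy classes are exactly those realized in the two tables, and one must keep $\mu$ and $\mu^{-1}$ straight --- inversion interchanges the finite-type and starred cases and sends $\left(\begin{smallmatrix}-1&k\\0&-1\end{smallmatrix}\right)$ with $k\ge 0$ to the same shape with $k\le 0$, which is exactly why the equality case of (4) singles out $a<0$. Everything else is a direct appeal to results already established.
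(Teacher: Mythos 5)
Your proposal is correct and follows essentially the same route as the paper: the paper likewise disposes of $(1)\Leftrightarrow(2)$ and $(1)\Leftrightarrow(3)$ by direct appeal to Theorems~\ref{not-all-wrap} and \ref{no-wrap}, and obtains the equivalence with $(4)$ by observing that all other possibilities have been eliminated by the preceding classification theorems. The only difference is that you spell out the $\SL_2(\bb{Z})$-conjugacy-class bookkeeping that the paper leaves implicit, and that bookkeeping is carried out correctly.
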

\begin{proof}(1)$\Leftrightarrow$(2) is easy, and (1)$\Leftrightarrow$(3) follows immediately from Theorems~\ref{not-all-wrap} and~\ref{no-wrap}. We see the equivalence of (1) with (4) because Theorems \ref{NegD}, \ref{NegSD}, \ref{all-wrap}, and \ref{no-wrap} have shown that all other possibilities for $\mu$ are equivalent to other cases.
\end{proof}

\section{Cluster modular groups}\label{CMG}

Recall the cluster modular group $\Gamma$ from Section~\ref{CMGdef}. We seek to explicitly describe the action of $\Gamma$ on $U^{\trop}$ in every positive rank $2$ case. However, keeping track of frozen variables will overly complicate matters and will obscure certain meaningful symmetries. We therefore define a new group $\Gamma'$ for which we drop the requirement that frozen vectors are permuted by $\Gamma$ (we allow frozen vectors to be mapped anywhere). This may introduce more automorphisms than one wishes to consider (i.e., automorphisms which only affect the frozen parts), so we mod out the subgroup which acts trivially on both $U^{\trop}$ and on the set of non-frozen vectors. $\Gamma$ can be recovered by taking the subgroup of $\Gamma'$ which is the stabilizer of the set of frozen vectors (roughly meaning that the corresponding cluster transformations extend over certain partial compactifications).

\subsection[The action on $U^{\trop}$]{The action on $\boldsymbol{U^{\trop}}$}

The action of $\Gamma'$ on rank $2$ $\s{X}$ induces an action on the tropicalization $U^{\trop}$ of the Looijenga pairs corresponding to fibers of $\s{X}$ (although generic fibers may be permuted by $\Gamma'$, their embedding in $\s{X}$ induces a canonical identification of their tropicalizations). Since $U^{\trop}$ as integral linear manifold depends only on the deformation type of $U$ (which is preserved by $\Gamma'$), we see that any $h\in \Gamma'$ must respect the integral linear structure of $U^{\trop}$. Also, recall that elements of $\Gamma'$ (unlike those of $\wh{\Gamma}$) must preserve the form $\langle \cdot,\cdot\rangle$, hence preserve the orientation of $U^{\trop}$. Thus:
\begin{lem}\label{oil}
The action of $\Gamma'$ on $U^{\trop}$ is oriented integral linear.
\end{lem}

Let $\Aut\big(U^{\trop}\big)$ be the group of all orientation preserving integral linear automoprhisms of~$U^{\trop}$. Consider the action $r\colon \Gamma' \rar \Aut\big(U^{\trop}\big)$. What we plan to describe is the image
\begin{gather*}
 G:=r(\Gamma')\subseteq \Aut\big(U^{\trop}\big).
\end{gather*}
As explained in the following conjecture, we expect that $G$ contains most of the interesting information about $\Gamma'$.
\begin{conj}Elements of the kernel of $r$ can be represented by seed transformations whose only seed isomorphisms are ones such that if $e_i\mapsto e_j$, then $v_i=v_j$. In particular, if~$S$ is totally coprime $($which by Proposition~{\rm \ref{CoprimeEquiv}} is always achievable through a sequence of fiberwise-equivalences and mutation equivalences$)$, then $\Gamma'=G$.
\end{conj}

\begin{conj}\label{AutU-Conj}$G=\Aut\big(U^{\trop}\big)$ for all rank $2$ cases.
\end{conj}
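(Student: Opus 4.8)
The plan is to reduce the conjecture to a statement purely about the monodromy $\mu$ of $U^{\trop}$ and then to match that statement, case by case, against the cluster side. The first step is an intrinsic description of $\Aut(U^{\trop})$. Any $\vf\in\Aut(U^{\trop})$ fixes the unique singular point $0$, hence restricts to an orientation-preserving integral linear automorphism of $U^{\trop}_0:=U^{\trop}\setminus\{0\}$; since it acts trivially on $\pi_1(U^{\trop}_0)\cong\bb{Z}$ it lifts to $\wt\vf\colon\wt U^{\trop}_0\to\wt U^{\trop}_0$ commuting with the deck group. Post-composing with a developing map $\delta$ as in \S\ref{dev} (an integral linear immersion) one checks that $\delta\circ\wt\vf=A\circ\delta$ for a single $A\in\SL_2(\bb{Z})$, and commutation with the deck transformation $T$ (which satisfies $\delta\circ T=\mu_\delta\circ\delta$ for the appropriate conjugate $\mu_\delta$ of $\mu$) forces $A\mu_\delta=\mu_\delta A$. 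Conversely, any $A$ centralizing $\mu_\delta$ determines via $\delta$ a well-defined element of $\Aut(U^{\trop})$, and two such $A$ give the same automorphism precisely when they differ by a power of $\mu_\delta$, since those act by deck transformations and hence trivially on the quotient $U^{\trop}$. Thus $\Aut(U^{\trop})\cong C_{\SL_2(\bb{Z})}(\mu)/\langle\mu\rangle$.

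The second step is to compute the right-hand side using the explicit monodromies. For $\mu$ of type $I_k$ ($k\geq 1$) the centralizer of the parabolic $\mu$ is $\pm\langle\text{unipotents}\rangle$, giving $\bb{Z}/k\times\bb{Z}/2$; for $I_0$ (the toric case) it is all of $\SL_2(\bb{Z})$; for the torsion monodromies $II,III,IV$ and $IV^*,III^*,II^*$ of Tables \ref{tab:no wrap} and \ref{wraptable} it is the (finite cyclic or dihedral) centralizer of an elliptic element; for $I_k^*$ one gets the $I_k$ answer extended by $-\Id$; and in the negative definite (cusp) case $C_{\SL_2(\bb{Z})}(\mu)$ is infinite cyclic with $\mu$ a proper power of its generator, so the quotient is a finite cyclic group (times $\pm\Id$). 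The inclusion $G\subseteq\Aut(U^{\trop})$ is then immediate: $\Gamma'$ acts on $\s{X}^{\trop}$, its restriction to $U^{\trop}$ preserves the symplectic form and hence the orientation (orientation-reversing symmetries occur only in $\wh{\Gamma}\setminus\Gamma$), and it is integral linear by construction (\S\ref{CMG}).

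The content of the conjecture is therefore the surjectivity of $r\colon\Gamma'\to\Aut(U^{\trop})$, i.e.\ that every $A\in C_{\SL_2(\bb{Z})}(\mu)$ is realized by a cluster automorphism. I would use the following mechanism. Fix a seed $S$ realizing $U$, with eigenray fan $\Sigma_S\subset U^{\trop}$, multipliers $d_i'$, and frozen set; by Remark \ref{factor} and Construction \ref{LooijengaConstr} this data amounts to a factorization of $\mu^{\pm1}$ into cyclically ordered unipotents. Since $A$ commutes with $\mu$ and preserves the area form, $A\cdot\Sigma_S$ (with the same multipliers and form) is again the eigenray fan of a seed $A\cdot S$ realizing the same $U^{\trop}$, and there is a tautological seed isomorphism $h\colon A\cdot S\to S$ sending the $i$-th seed vector to the $i$-th seed vector, with $h_*|_{U^{\trop}}=A^{-1}$. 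If one can connect $S$ to $A\cdot S$ by a sequence of mutations, then the composite (this mutation path, which is the identity on the underlying set of $\s{X}^{\trop}$, followed by $h$) is a cluster automorphism of $S$ whose class in $\Gamma'$ is well defined by Theorem \ref{TropClust} and which induces $A^{-1}$ on $U^{\trop}$; running over all $A$ then gives surjectivity of $r$. So everything reduces to: \emph{for every $A\in\Aut(U^{\trop})$, the seed $A\cdot S$ is mutation equivalent to $S$}. In the finite-type cases (Theorem \ref{no-wrap}) the cluster complex fills all of $U^{\trop}$ and the exchange graph is finite, so $A$ permutes seeds and connectivity is automatic; in the remaining acyclic cases (Theorem \ref{not-all-wrap}) $A$ preserves the intrinsically defined cone $\s{C}_U$ and permutes its maximal cones, which are exactly the seeds; and in the negative definite and positive non-acyclic cases one would lean on the known generators $\nu_\pm$, which are induced by $\Gamma$ (Proposition \ref{nuAut}), together with the cyclic permutation symmetry of the three-vector seed in item (6) of Theorem \ref{all-wrap} (e.g.\ the $\bb{Z}/3$ for the cubic surface of Example \ref{cubic cluster}), which should generate the relevant centralizers.

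The hard part — and, I expect, the reason this is only a conjecture — is proving $A\cdot S\sim S$ uniformly in the cases where the exchange graph is infinite: the positive non-acyclic cases, where all lines wrap so $\s{C}_U$ is essentially empty and there is no ``cone $=$ seed'' dictionary to exploit, and the negative (semi)definite cases, where the developing-map and $\nu_\pm$ constructions of \S\ref{LinearAuts} were set up under a positivity hypothesis and would have to be replaced by automorphisms coming from the self-maps of the associated cusp singularity (as studied in \cite{GHK1}) or from an elliptic fibration. Equivalently, one must show that ``apply $A$ to every eigenray'' is always obtainable from the Picard--Lefschetz-type moves on factorizations of $\mu$ that correspond to mutations; I do not see a uniform argument for this, so the safest route is probably to combine the case analysis above with the explicit descriptions of $\Gamma$ acting on $U^{\trop}$ (tabulated in the positive cases) and verify equality of the two groups one case at a time.
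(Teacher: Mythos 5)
First, be aware that the paper does not prove this statement: it is stated as a conjecture, and the only evidence offered is the remark immediately following it that equality can be checked against the explicit computations of $G$ in \S\ref{CMG} when $\mu^{-1}$ is one of Kodaira's monodromies. Your overall architecture --- compute $\Aut(U^{\trop})$ intrinsically from the monodromy, observe $G\subseteq\Aut(U^{\trop})$, and reduce surjectivity of $r$ to the mutation-equivalence $A\cdot S\sim S$ --- is a reasonable attack, and you are right to flag the non-acyclic positive and negative (semi)definite cases as the genuine obstruction (note also that $\nu_\pm$ are simply undefined when $U$ is not positive, since lines need not reach infinity there).

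However, the one step you present as rigorous is wrong. The homomorphism $\Aut(U^{\trop})\to C_{\SL_2(\bb{Z})}(\mu)/\langle\mu\rangle$ is well defined, but it is \emph{not injective} whenever the developing map fails to be injective, i.e.\ in every positive non-toric case. The reason is that $\delta$ identifies $\wt{U}_0^{\trop}$ with a sector of total angle greater than $2\pi$ in the universal cover of $\bb{R}^2\setminus\{0\}$, so there exist $\wt{\vf}\neq \Id$ with $\delta\circ\wt{\vf}=\delta$ (full-rotation sheet shifts), and two automorphisms whose developing matrices differ by a power of $\mu$ can be distinct maps of $U^{\trop}$. The bookkeeping must be done with lifts to the universal cover of $\SL_2(\bb{R})$, remembering winding, not in $\SL_2(\bb{Z})$ itself. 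This is not a technicality: for $I_k$ ($k\geq 1$) your formula yields the finite group $(\bb{Z}/k)\times\bb{Z}/2$, whereas already the subgroup $G\cong\bb{Z}\rtimes\bb{Z}/2\bb{Z}$ of $\Aut(U^{\trop})$ recorded in Table \ref{ClusterModularTable} is infinite --- the shear fixing the monodromy eigenline acts on the slit-plane fundamental domain by $\left(\begin{smallmatrix}1&m\\0&1\end{smallmatrix}\right)$ and these are pairwise distinct automorphisms of $U^{\trop}$ for all $m\in\bb{Z}$, even though infinitely many of them represent the trivial class in $C(\mu)/\langle\mu\rangle$. Similarly for $I_k^*$ you would get $\bb{Z}/2k$ against the correct $\bb{Z}$. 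As written, your case-by-case comparison in step 2 would therefore ``refute'' the conjecture precisely in cases where the paper verifies that it holds; the centralizer description must be corrected before any of the subsequent matching can be carried out.
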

Recall $\nu_{\pm}\in\Aut\big(U^{\trop}\big)$ from Section~\ref{LinearAuts}. We will see that at least these elements are always in $G$. Furthermore, from our descriptions of $G$ below, one can explicitly check Conjecture \ref{AutU-Conj} holds for the all-lines-wrap and no-lines-wrap cases~-- i.e., for the cases where $\mu^{-1}$ is one of Kodaira's monodromies.

We now note that when considering $U^{\trop}$ with its canonical integral linear structure, mutating with respect to a seed vector $e_i$ for some seed $S$ does not change the positions of any of the $v_j$'s in $U^{\trop}$ except for $v_i$. This is because the centers of the blowups corresponding to the $e_j$'s, $j\neq i$, are preserved by mutation, and the divisor containing the center is the one corresponding to $v_j$. Thus, we only have to worry about what happens to $v_i$. This vector is negated with respect to the vector space structure $U^{\trop}_S$. We now interpret what this means in different cases.

As in Section~\ref{mutmono}, we use the notation $\mu_{i,S}$ to indicate that we are mutating a seed $S$ with respect to a vector $e_i$. We let $S_{i_1,\dots,i_k}$ denote the seed obtained from $S$ by mutating with respect to the seed vectors with indices $i_1$, then $i_2$, and so on up through $i_k$.

The next two subsections will describe $G$ explicitly in all positive rank $2$ cases, in particular proving that $G$ in these cases is as in Table \ref{ClusterModularTable}.

\begin{table}[t] \centering
 \begin{tabular}{ | l | l |}
 \hline
 Classification & G 	 \\ \hline
 $I_0$ (toric) & $\SL_2(\bb{Z})$ \\ \hline
 $I_k$ ($k>0$ odd) & $\bb{Z}$ \\ \hline
 $I_k$ ($k>0$ even) & $\bb{Z}\oplus \bb{Z}/2\bb{Z}$ \\ \hline
 $II$ & $\bb{Z}/5\bb{Z}$ \\ \hline
 $III$ & $\bb{Z}/3\bb{Z}$ \\ \hline
 $IV$ & $\bb{Z}/4\bb{Z}$ \\ \hline
 some lines wrap. & $\bb{Z}$ \\ \hline
 $I^*_0$ & $\PSL_2(\bb{Z})$ \\ \hline
 $I^*_k$ ($k>0$) & $\bb{Z}$ \\ \hline
 $II^*$ & $\{\Id\}$ \\ \hline
 $III^*$ & $\{\Id\}$ \\ \hline
 $IV^*$ & $\bb{Z}/2\bb{Z}$ \\ \hline
 \end{tabular}\caption{The isomorphism class of the image $G$ of $\Gamma'\rar \Aut\big(U^{\trop}\big)$ for the positive rank $2$ cases.
 If there are no frozen vectors, then $\Gamma=\Gamma'$.}\label{ClusterModularTable}
 \end{table}

\subsection{When at least some lines do not wrap}\label{Auto-no-wrap}
We first describe $G$ in the cases where at least some lines do not wrap. In the toric case we of course have $G=\Gamma'=\SL_2(\bb{Z})$. Note that in these toric cases, $\nu^{\pm}=-\Id\in \SL_2(\bb{Z})$. To understand $G$ in the other cases with at least some lines not wrapping, we begin by finding elements of $\Gamma'$ in these cases which map to $\nu^{\pm}\in \Aut\big(U^{\trop}\big)$.

\subsubsection[$\nu^{\pm}$ with at least some lines not wrapping:]{$\boldsymbol{\nu^{\pm}}$ with at least some lines not wrapping:} We saw in Lemma~\ref{model} that if a~line $L$ does not wrap, then (ignoring frozen vectors) there is a unique seed $S$ for which each $v_i$ is contained in $\sigma_L\setminus \rho$, where $\sigma_L$ is the cone bounded by~$L$ and~$\rho$ is either boundary ray of this cone. Assume the $v_i$'s are arranged in counterclockwise order $v_1,\dots,v_s$.

Note that any line in $U^{\trop}$ which does not intersect any $\rho_{v_i}$ is also a straight line in $U^{\trop}_S$. In particular, this holds for lines of the form $L_{v_1}^{>0}$ and $L_{v_s}^{<0}$. Thus, $\mu_{e_1}^{\s{X}}$ has the effect of applying~$\nu_+$ to~$v_1$, while $\mu_{e_s}^{\s{X}}$ has the effect of applying~$\nu_-$ to~$v_s$.

Now note that $v_2,\dots,v_s,v_1':=\nu_+(v_1)$ are all contained in $\sigma_{L_{v_1}^{>0}}\setminus \rho_{v_1}$, so we can repeat the process, mutating~$v_2$, then~$v_3$, and so on. Alternatively, we could have done the reverse, mutating $v_s$ first, then $v_{s-1}$, and so on. Since $\nu_{\pm}$ are integral linear automorphisms of~$U^{\trop}$ by Lemma~\ref{nu}, we see that
\begin{gather}\label{m-}
 m_-:=\nu_{-} \circ \mu_{s,S_{1,2,\dots,s-1}}\circ \cdots \circ \mu_{1,S}
\end{gather} is an element of $\Gamma'$, with inverse $m_+:=\nu_+ \circ \mu_{1,S_{s,s-1,\dots,2}}\circ\cdots\circ \mu_{s,S}$. We note that applying $r$ gives
\begin{gather*}
 r(m_{\pm}) = \nu_{\pm}\in G
\end{gather*}
whenever at least some lines do not wrap.

\begin{rmk}\label{subseq}Suppose $S'$ is another seed, mutation-equivalent to $S$ and isomorphic to $S$, thus inducing a seed auto-transformation $g$ of $S$. Then $r(g)(\sigma_{L}\setminus \rho)$ must be a cone $\sigma_{L'}\setminus \rho'\subset U^{\trop}$ (determined by a line $L'$ in $U_{S'}^{\trop}$) which contains the directions $v_i'$ associated to $S'$. Conversely, $S'$ is determined by $\sigma_{L'}\setminus \rho'$. Each such $L'$ can be obtained by rotating the original line $L$, and if we rotate the boundary of $\sigma_{L'}$ past some $v_k$, then the acyclic quiver whose associated $v_i$'s are contained in $\sigma_{L'}\setminus \rho'$ will change via the mutation $\mu_k$. Thus, the seed auto-transformations, modulo those which fix the non-frozen seed vectors (as appear in the $I_k$ cases), are all generated by subsequences of the mutations defining $m_-$ and $m_+$.
\end{rmk}

\subsubsection[The $I_k$ cases $(k\geq 1)$]{The $\boldsymbol{I_k}$ cases $\boldsymbol{(k\geq 1)}$}
For these cases, there are non-trivial cluster auto-transformations which fix the non-frozen seed vectors. If we identify $U^{\trop}_S$ with $\bb{R}^2$, with $v_1$ being identified with $(1,0)$, then these cluster auto-transformations fixing $v_1$ are just the stabilizer of $(1,0)$ in $\SL_2(\bb{Z})$, i.e., the infinite cyclic group generated by
\begin{align*}
a:=\left(\begin{matrix} 1 & 1 \\ 0 & 1 \end{matrix} \right)=\mu^{-1/k}.
\end{align*}
The corresponding seed auto-transformation is the one acting by $a$ on the lattice $U^{\trop}_S$ and fixing all the other seed data. The elements $a$ and $\nu_+$ together are sufficient to generate $G$. Viewing~$\nu^+$ as rotation $180^{\circ}$ clockwise around the origin in $\wt{U}_0^{\trop}$, we see that $\nu_+^2 = \mu^{-1}=a^k$. We also check that $\nu_+^{-1}\circ a \circ \nu_+=a$ (it suffices to evaluate on $(0,-1)$ since we know $a$ acts trivially on $v_1=(1,0)$), so $a$ and $\nu_+$ commute. With these relations, we can describe $G$ explicitly. If~$k$ is odd, then $G\cong \bb{Z}$ with generator $\nu_-\circ a^{(k+1)/2}$. If $k$ is even, then $G\cong \bb{Z}\oplus \bb{Z}/2\bb{Z}$, with the $\bb{Z}$-summand generated by $a$ and the $\bb{Z}/2\bb{Z}$-summand generated by $\nu_-\circ a^{k/2}$.

\subsubsection[The $II$, $III$, and $IV$ cases]{The $\boldsymbol{II}$, $\boldsymbol{III}$, and $\boldsymbol{IV}$ cases} For the $II$ case (i.e., the $A_2$-quiver), a single mutation already produces a seed isomorphic to the initial seed, yielding an element $g\in \Gamma'$ whose square is either $m_+$ or $m_-$, depending on which mutation was performed. This element $g$ generates $\Gamma'$ and satisfies $g^5=\id$, so $\Gamma'\cong \bb{Z}/5\bb{Z}$ in this case (the five elements corresponding to the five chambers of the cluster complex). Of course, $m_+$ and $m_-$ also turn out to generate $\Gamma'$, but a priori it was not obvious that their powers would give the element $g$.

For the $III$ and $IV$ cases, we do not have fractional powers of $m_{\pm}$ in $G$ since no subsequence of the mutations in \eqref{m-} yields an isomorphic seed. To see this in the $III$ case, recall from Table~\ref{tab:no wrap} that this corresponds to an $A_3$ quiver, which for some choice of edge-orientations and some choice of basis corresponds to having $v_1=v_3=(1,0)$ and $v_2=(0,1)$. We see that one must mutate all three of these to get back to a configuration with $2$ $v_i$'s in one direction and the third counter-clockwise of this. Similarly for the $IV$ case with the $D_4$ quiver, this time with~$3$~$v_i$'s in the $(1,0)$ direction and still one in the $(0,1)$ direction. Now using Remark~\ref{subseq}, we see that~$G$ in the $III$ and $IV$ cases is generated by $\nu_+$ (or $\nu_-$), which one explicitly checks (cf.\ Example \ref{DevIIIEx} below) has order $3$ or $4$, respectively, yielding $G\cong \bb{Z}/3\bb{Z}$ for the $III$ case and $G\cong \bb{Z}/4\bb{Z}$ in the~$IV$ case.

\begin{eg}\label{DevIIIEx}By Table~\ref{tab:no wrap}, we can view $(Y,D)$ in the type $III$ case as being obtained by starting with the toric variety $\big(\bb{P}^2,D=D_1+D_2+D_3\big)$ (the divisors $D_i$ being distinct lines), and then blowing up two points on $D_1$ and one point on $D_2$. We compute the developing map by taking $\rho_1^0$ and $\rho_2^0$ to be the rays generated by $(1,0)$ and $(0,1)$ respectively, and then repeatedly applying~\eqref{linear}. Fig.~\ref{DevIII}(a) depicts the first sheet of the developing map, plus the start of the second sheet, the five rays there being generated by $(1,0)$, $(0,1)$, $(-1,0)$, $(1,-1)$, and $(2,-1)$. Fig.~\ref{DevIII}(b) continues with the rest of the second sheet and the start of the third, the three new rays being generated by $(-1,1)$, $(-1,0)$, and $(0,-1)$. We see that the cone bounded by the rays~$\rho_1^2$ and~$\rho_2^2$ (i.e., the first cone in the third sheet) is a $3\pi$-radian counterclockwise rotation of the original cone bound by~$\rho_1^0$ and~$\rho_2^0$. Thus, $(\nu_-)^3$ agrees with the deck transformation~$\mu^{-2}$ raising points up two sheets, and so $\nu_-$ has order $3$ as an automorphism of~$U^{\trop}$.

\begin{figure}[htb]\centering
\begin{tabular}{ c c }
\xy
{\ar (0,0); (10,0)}; (10,3)*{\rho_1^0};
{\ar (0,0); (0,10)}; (3,10)*{\rho_2^0};
{\ar (0,0); (-10,0)}; (-13,-1)*{\rho_3^0};
{\ar (0,0); (10,-10)}; (12,-10)*{\rho_1^1};
{\ar (0,0); (20,-10)}; (22,-10)*{\rho_2^1};
\endxy
 \hspace{.5 in} \quad & \quad
\xy
{\ar (0,0); (-10,0)}; (-10,-3)*{\rho_1^2};
{\ar (0,0); (0,-10)}; (-3,-10)*{\rho_2^2};
{\ar (0,0); (-10,10)}; (-6,11)*{\rho_3^1};
{\ar (0,0); (10,-10)}; (12,-10)*{\rho_1^1};
{\ar (0,0); (20,-10)}; (22,-10)*{\rho_2^1};
\endxy
 \vspace{.05 in} \\
 (a) & (b) \end{tabular}
\caption{Developing map for type $III$.} \label{DevIII}
\end{figure}

 They type $IV$ case is handled similarly, and there one finds that $(\nu_-)^4=\mu^{-3}$, indicating that~$\nu_-$ has order $4$ in this case.
\end{eg}

We note that powers of the cluster transformations for the $II$, $III$, and $IV$ cases give the trivial cluster transformations described in \cite[Proposition 1.8]{FG1}, for their cases $h=3,4$, and $6$, respectively. The $I_2$ case with no frozen vectors (i.e., $A_1\times A_1$) corresponds to~\cite{FG1}'s $h=2$ case (note that this case is not rank~$2$, cf.\ Remark~\ref{NeedFrozen}).

\subsubsection{Some lines wrap, but some do not} We saw when defining $m_-$ in~\eqref{m-} that $\mu_{k,S_{1,2,\dots,k-1}}\circ \cdots \circ \mu_{1,S}$ produces a seed isomorphic to~$S$ when $k=s$ (the number of non-frozen seed vectors). However, as with the $A_2$ case above, it often happens in the some-lines-wrap cases that the above composition of mutations produces a seed isomorphic to $S$ for some smaller $k$ (e.g., $k=1$ in the $A_2$ case). Choosing the minimal such~$k$ yields an element of $\Gamma'$ which we denote $m_-^{s/k}$, and then $G$ is the infinite cyclic group~$\bb{Z}$ generated by $r\big(m_-^{s/k}\big)=:\nu_-^{s/k}$. Here, we use Remark~\ref{subseq} to ensure that this gives all of~$G$.

\subsection{When all lines wrap}\label{Auto-wrap}

\subsubsection[The $I_0^*$ case]{The $\boldsymbol{I_0^*}$ case}\label{I0star} Here, $U^{\trop}=\bb{R}^2/\{\pm \Id\}$, so $\Aut\big(U^{\trop}\big)=\SL_2(\bb{Z})/{\pm \id} = \PSL_2(\bb{Z})$. We will now show that $G$ includes this whole automorphism group.

Take a coprime seed as in the second part of Example~\ref{cubic cluster}. That is, take the seed $S$ with no frozen vectors and with $\langle \cdot,\cdot\rangle$ given by
	$\left(\begin{smallmatrix}
		0 &1 &-1 \\
	 -1 &0 &1 \\
	 1 &-1 &0
		\end{smallmatrix}\right)$,
		and each $d_i=d_i'=2$. We can identify $v_1$, $v_2$, and $v_3$ with $(2,0)$, $(0,2)$, and $(-2,-2)$ in $N_{\?{Y}}$, respectively. Mutating with respect to~$v_3$, we have $v_1'=\mu_{3,S}(v_1)=v_1$, $v_2'=\mu_{3,S}(v_2)=(-4,-2)$, and $v_3'=\mu_{3,S}(v_3)=(2,2)$. Note that there is a vector space isomorphism~$\alpha$ taking the ordered triplet $(v_1',v_3',v_2')$ to the ordered triplet $(v_1,v_2,v_3)$. Thus, $\alpha \circ \mu_{3,S}$ gives an element of~$\Gamma$ which induces an automorphism $\tau \in G$.
		
Note that $\tau$ takes the ordered triplet $(v_1,v_2,v_3)$ to the ordered triplet $(v_1,v_1+v_2, v_2)$ (addition done in $\sigma_{v_1,v_2}$ as defined in Notation~\ref{UtropNotation}). Thus, in the basis used in the previous paragraph, $\tau$ is given by the matrix $\tau=\left(\begin{smallmatrix} 1 & 1 \\ 0 & 1\end{smallmatrix}\right)$. On the other hand, we have another cluster auto-transformation given by the seed automorphism taking $e_1\mapsto e_2$, $e_2\mapsto e_3$, and $e_3\mapsto e_1$. The matrix for this latter transformation is $\left(\begin{smallmatrix} 0 & -1 \\ 1 & 1\end{smallmatrix}\right)=\sigma \tau$, where $\sigma:=\left(\begin{smallmatrix} 0 & -1 \\ 1 & 0\end{smallmatrix}\right)$. It is standard that these matrices $\tau$ and $\sigma$ generate the modular group $\PSL_2(\bb{Z})$. Hence, $G=\Aut\big(U^{\trop}\big)=\PSL_2(\bb{Z})$.
		
Note that since we had no frozen vectors, we have $\Gamma=\Gamma'$. Furthermore, $\ker(r)=0$ for this~$S$, so we also have $\Gamma'=G$. Hence, $\Gamma=\PSL_2(\bb{Z})$, agreeing with~\cite{FG1}'s computation of this~$\Gamma$ in their Lemma~2.32.

\subsubsection[The $I_k^*$ cases $(k\geq 1)$]{The $\boldsymbol{I_k^*}$ cases $\boldsymbol{(k\geq 1)}$}

For the $I_k^*$ cases, take $S$ as in Section~\ref{I0star} above, but with $d_1'=2+k$, $d_2'=d_3'=2$. Then we obtain an element $g$ of $G$ exactly as in the $I_0^*$ case, applying $\mu_{3,S}$ followed by a vector space isomorphism. However, unlike before, we cannot cycle the roles of the three seed vectors, because now $v_1$ is special. So the only elements of $\Gamma'$ come from repeatedly applying $g$ or its inverse (the inverse corresponds to using $\mu_{2,S}$ in place of $\mu_{3,S}$ above). Thus, we find that $\Gamma'=\bb{Z}$. The action on $U^{\trop}$ is given as follows: Identify $U^{\trop}\setminus \rho_{v_1}$ with the upper half plane so that $v_2$ and $v_3$ are identified with $(0,1)$ and $(-1,1)$, respectively. Then $a\in \bb{Z}$ corresponds to the automorphism $g^a\colon (x,y)\mapsto (x+ay,y)$. In particular, we have that $\pm k\in \bb{Z}$ corresponds to $\nu_{\pm}$.

\subsubsection[The $IV^*$, $III^*$, and $II^*$ cases]{The $\boldsymbol{IV^*}$, $\boldsymbol{III^*}$, and $\boldsymbol{II^*}$ cases}

For the $IV^*$, $III^*$, and $II^*$ cases, we take $d_2'=3$, $d_3'=2$, and $d_1'=3,4,$ or $5$, respectively. In the $IV^*$ case, when we apply $\mu_{3,S}$, we can then compose with the seed isomorphism $v_3'\mapsto v_3$, $v_1'\mapsto v_2$, and $v_2'\mapsto v_1$. We claim that this is the only nontrivial element of $G$ in this case, yielding $G\cong \bb{Z}/2\bb{Z}$. One can check that this non-trivial element is in fact $\nu_{+}=\nu_-$. In the $III^*$ and $II^*$ cases, we claim that we do not even have this element, i.e., $G$ is trivial.

To check that there are not additional elements of $G$ in these cases, it suffices to check that there are no other elements $\Aut\big(U^{\trop}\big)$. Recall that elliptic matrices (those whose trace has absolute value less than $2$) are conjugate to rotation matrices, and so they can only commute with other elliptic matrices or $\pm \Id$. Up to conjugation, all the elliptic matrices appear in the~$II$,~$III$, or $IV$ rows of Table~\ref{tab:no wrap} or the~$II^*$,~$III^*$ or $IV^*$ rows in Table~\ref{wraptable}. By inspecting these, one sees that the centralizer $C_{\SL_2(\bb{Z})}(\mu)$ in the $IV^*$ and $II^*$ cases is the order $6$ cyclic group generated by $\left(\begin{smallmatrix} 1 & 1 \\ -1 & 0\end{smallmatrix}\right)$, while in the $III^*$ case $\mu=\left(\begin{smallmatrix} 0 & -1 \\ 1 & 0\end{smallmatrix}\right)$ generates its own (order $4$) centralizer. Viewing the elements of these centralizers as counterclockwise rotation (for some oriented basis) by some angle in $[0,2\pi)$, we see that $\Aut\big(U^{\trop}\big)$ is given by the elements of the centralizer of~$\mu^{-1}$ which correspond to smaller rotations than $\mu^{-1}$. This yields the matrix $\left(\begin{smallmatrix} 0 & -1 \\ 1 & 1\end{smallmatrix}\right)$ as the sole non-trivial element of $\Aut\big(U^{\trop}\big)$ in the $IV^*$ case, and shows that $\Aut\big(U^{\trop}\big)={\Id}$ in the~$III^*$ and~$II^*$ cases, as claimed.

\begin{rmk}\label{ExtendedModular}We note that there is an orientation {\it reversing} automorphism in each of these three cases which, after mutating with respect to~$v_3$ takes $v_i'\mapsto v_i$ for each~$i$. Thus, one can obtain extra, potentially interesting symmetries of the scattering diagram by considering $\wh{\Gamma}$ as in Section~\ref{CMGdef} in place of~$\Gamma$.
\end{rmk}

In the $I_0^*$, $III^*$, and $II^*$ cases, one can check that $\nu_{\pm}$ are trivial since both ends of any line point in the same direction, cf.\ \cite[Fig.~4.2(c,d)]{Man1} for the $III^*$ and $II^*$ cases. Thus, in conjunction with what we have seen in the other cases, we have found that:
\begin{prop}\label{nuAut}$\nu_{\pm}$ are induced by the cluster modular group $\Gamma'$ $($which we do not require to preserve frozen vectors$)$ in all the positive cases.
\end{prop}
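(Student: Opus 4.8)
The plan is to read the proposition off the explicit case analysis already carried out in \S\ref{Auto-no-wrap} and \S\ref{Auto-wrap}, verifying in each positive rank $2$ family that $\nu_+$ and $\nu_-$ occur among the elements of $G=r(\Gamma')$ produced there. Recall that the positive cases split into three families: the finite-type (no lines wrap) cases $I_k$ ($k\geq 0$), $II$, $III$, $IV$; the ``some lines wrap'' cases; and the non-acyclic (all lines wrap) cases $I_k^*$ ($k\geq 0$), $II^*$, $III^*$, $IV^*$.

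First I would dispatch the families in which $U^{\trop}$ contains a line $L$ that does not wrap, i.e.\ the finite-type cases $II$, $III$, $IV$ and the ``some lines wrap'' cases. By Lemma \ref{model} there is a seed $S$, unique once frozen vectors are ignored, whose non-frozen images $v_1,\ldots,v_s$ in $U^{\trop}$ lie in $\sigma_L$ in counterclockwise order, and \S\ref{Auto-no-wrap} builds from $S$ the explicit cluster transformations $m_\pm$ (the composition of the mutations $\mu_{i}$ through $\sigma_L$, corrected by a $\nu_\mp$); these represent elements of $\Gamma'$ and satisfy $r(m_\pm)=\nu_\pm$, so $\nu_\pm\in G$ throughout these two families. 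The degenerate $I_k$ cases (where the non-frozen $v_i$ all lie on one ray) are instead covered by the explicit computations of \S\ref{Auto-no-wrap}: $G=\SL_2(\bb{Z})$ for $I_0$, in which $\nu_+=\nu_-=-\Id$, and $G=\bb{Z}\rtimes\bb{Z}/2\bb{Z}$ for $k>0$, in which $\nu_+^2=\mu^{-2k}$ manifestly exhibits $\nu_+$.

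Next I would treat the non-acyclic family using \S\ref{Auto-wrap}. There, starting from the coprime seed on $(\bb{P}^2,D_1+D_2+D_3)$ with the multipliers from Table \ref{wraptable}, a single mutation with respect to $v_3$ composed with a suitable seed isomorphism produces the generator(s) of $G$: for $I_k^*$ with $k>0$ one has $G\cong\bb{Z}$ with $\pm k$ acting as $\nu_\pm$, and for $IV^*$ the unique nontrivial element of $G$ equals $\nu_+=\nu_-$. In the remaining three cases $I_0^*$, $III^*$, $II^*$ one checks that $\nu_+$ and $\nu_-$ act as the identity on $U^{\trop}$ --- so they lie in $G$ trivially --- by unwinding the identities $\delta^0[\nu^+(v)]=-\delta^1(v)$ and $\delta^0[\nu^-(v)]=-\delta^{-1}(v)$ from \S\ref{Auto-no-wrap} against the monodromy matrices in Table \ref{wraptable} ($\mu=-\Id$ for $I_0^*$, and $\mu$ of finite order for $III^*$ and $II^*$). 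Collecting the three families then yields the proposition.

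The only genuine computation is this last verification that $\nu_\pm=\Id_{U^{\trop}}$ in the $I_0^*$, $III^*$, $II^*$ cases; since it is a short unwinding of the developing-map formulas together with already-tabulated monodromy matrices, I do not expect any real obstacle --- the substance of the proof is simply to assemble the scattered observations of \S\ref{Auto-no-wrap} and \S\ref{Auto-wrap} into a single uniform statement.
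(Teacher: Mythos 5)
Your proposal is correct and follows essentially the same route as the paper: Proposition \ref{nuAut} is stated there precisely as a summary of the case-by-case constructions in \S\ref{Auto-no-wrap} (the elements $m_\pm$ with $r(m_\pm)=\nu_\pm$ when some line does not wrap, plus the separate $I_k$ discussion) and \S\ref{Auto-wrap} ($\nu_\pm$ realized explicitly in the $I_k^*$, $k>0$, and $IV^*$ cases, and checked to be trivial in $I_0^*$, $III^*$, $II^*$). Your assembly of these observations, including the final verification via $\delta^0[\nu_+(v)]=-\delta^1(v)$ against the tabulated monodromies, is exactly the intended argument.
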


\subsection{Strong deformation equivalence}

We see from Theorems~\ref{all-wrap} and~\ref{no-wrap} (and the supporting Tables~\ref{wraptable} and~\ref{tab:no wrap}) that if $\mu^{-1}$ is any of Kodaira's monodromies, then $U^{\trop}$ uniquely determines $U$ up to isomorphism and deformation. In fact, we have something slightly stronger. We say that $U$ and $U'$ corresponding to the same~$U^{\trop}$ are {\it strongly deformation equivalent} if we can deform one to the other while preserving the identifications of their divisorial valuations with $U^{\trop}(\bb{Z})$ (cf.\ Remark~\ref{divisorial}). In other words, $U$ and $U'$ being strongly deformation equivalent means that if we decorate the surfaces using their relationship with~$U^{\trop}$, we can find a deformation of~$U$ which is isomorphic to a deformation of~$U'$ via an isomorphism which acts trivially on~$U^{\trop}$.

\begin{thm}\label{sing}Suppose the monodromy of $U^{\trop}$ is one of Kodaira's monodromies. Then $U^{\trop}$ determines $U$ up to strong deformation equivalence. In other words, for any log Calabi--Yau surface $U$ corresponding to~$U^{\trop}$, any factorization of the singularity of $U^{\trop}$ into focus-focus singularities as in Remark~{\rm \ref{factor}} is induced by a toric model of~$U$.
\end{thm}
\begin{proof}As noted at the start of this subsection, we already saw in Tables \ref{wraptable} and \ref{tab:no wrap} that~$U^{\trop}$ in these cases determines $U$ up to deformation and isomorphism. So if $U$ and $U'$ are two log Calabi--Yau surfaces with tropicalization $U^{\trop}$, then some deformation of $U$ is isomorphic to some deformation of $U'$. It only remains to check that this isomorphism $\iota$ can be chosen in a way which acts trivially on $U^{\trop}$. But we have seen in Sections~\ref{Auto-no-wrap} and~\ref{Auto-wrap} that Conjecture~\ref{AutU-Conj} holds in all cases corresponding to Kodaira's monodromies. So if $\iota$ induces a non-trivial automorphism of $U^{\trop}$, this element must be induced by some $g\in \Gamma'$. Thus, applying $g^{-1}$ to the $\s{X}$-space $\s{X}'$ of which $U'$ is a symplectic leaf yields an isomorphism $g^{-1}\circ \iota$ from $U$ to a fiber of $\s{X}'$, and this fiber is a deformation of~$U'$ since they live in the same $\s{X}$-space. Since $g^{-1}\circ \iota$ induces the identity on~$U^{\trop}$, this proves the claim.
\end{proof}

\begin{rmk}We suggest that the appearance of Kodaira's matrices may have some geometric significance. The symplectic heuristic behind \cite{GHK1}'s mirror construction (see their Section~0.6.1) assumes that $U$ admits a special Lagrangian torus fibration over $U^{\trop}$, or at least over a deformation of $U^{\trop}$ in which the singularity is factored into several singular points. Indeed, \cite{Sym}~shows that there is at least a Lagrangian fibration when the singularity is factored. In the~$I_k$ cases there are explicit formulas for special Lagrangian fibrations~-- see~\cite{GrSlag}, or see~\cite{CU} which begins with a nice brief presentation of this. Further examples of cluster varieties are known to come from moduli of Higgs bundles, in which case the Hitchin fibration is a special Lagrangian fibration. We further hope that $U$ admits a hyperk\"{a}hler structure, and that for some rotation of the complex structure, the SYZ fibration will become an elliptic fibration (this is standard in the Hitchin system cases). The inverse monodromy being a Kodaira matrix then suggests the possibility of compactifying this elliptic fibration with a~fiber at infinity. This elliptic fibration picture also leads us to suspect that \cite{CV}'s results on uniqueness of factorizations of singular fibers of elliptic fibrations is related to our Theorem~\ref{sing}.
\end{rmk}

Theorem~\ref{sing} shows whenever the monodromy of $U^{\trop}$ is one of Kodaira's monodromies, there is an essentially\footnote{The definition of a consistent scattering diagram (cf.~\cite{GHK1}) depends on several choices, including a choice of fan $\Sigma$, a lattice $P^{gp}$, a submonoid $P$, and a multi-valued convex integral piecewise linear function~$\varphi$. When we say there is an essentially canonical consistent scattering diagram, we really mean that these choices uniquely determine the scattering diagram up to the notion of equivalence in~\cite{GPS}. Changing the above data just corresponds to changing the coefficients of the scattering functions.} canonical consistent scattering diagram in $U^{\trop}$~-- namely, the cano\-ni\-cal scattering diagram which~\cite{GHK1} associates to a Looijenga pair whose tropicalization is $U^{\trop}$. Furthermore, I conjecture that the canonical scattering diagrams coming from Looijenga pairs are the only consistent ones in any~$U^{\trop}$ which consist only of outgoing rays.

For other (log) Calabi--Yau manifolds of possibly higher dimension, more complicated singularities may again appear. However, if the monodromies are factored into Kodaira's mono\-dromies, the above discussion suggests that the appropriate consistent scattering diagrams near these singular loci should be canonically determined by the affine structure. I speculate that this may allow one to relax the need for the simplicity assumption in \cite{GS11} and the $A_1$-singularity assumption in \cite{WCS}.

\subsection*{Acknowledgements}

Most of this paper is based on part of the author's thesis, which was written while in graduate school at the University of Texas at Austin. I would like to thank my advisor, Sean Keel, for introducing me to these topics and for all his suggestions, insights, and support. I would also like to thank Yan Zhou for asking insightful questions that helped improve the final draft, and also the anonymous referees for their many very helpful suggestions.

This work was supported in part by the center of excellence grant ``Centre for Quantum Geometry of Moduli Spaces'' from the Danish National Research Foundation (DNRF95), and later by the National Science Foundation RTG Grant DMS-1246989, and finally by the Starter Grant ``Categorified Donaldson--Thomas Theory'' no. 759967 of the European Research Council.

\pdfbookmark[1]{References}{ref}
\LastPageEnding

\end{document}